\newtheorem{theorem}{Theorem}[section]
\newtheorem{proposition}[theorem]{Proposition} 
\newtheorem{lemma}[theorem]{Lemma}
\newtheorem{corollary}[theorem]{Corollary}
\newtheorem*{claim*}{Claim}
\theoremstyle{definition}
\newtheorem{definition}[theorem]{Definition}
\newtheorem{question}[theorem]{Question}
\newtheorem{notation}[theorem]{Notation}
\newtheorem{widget}[theorem]{Widget}
\theoremstyle{remark}
\newtheorem{remark}[theorem]{Remark}
\newtheoremstyle{noparens}%
  {}{}%
{}{}%
{\bfseries}{.}%
{ }%
{\thmname{#1}\thmnumber{ #2}\thmnote{ #3}}
\theoremstyle{noparens}
\newtheorem*{question*}{Question}
\DeclareMathOperator{\dom}{\textup{dom}}
\newcommand{\andd}{\wedge}
\newcommand{\orr}{\vee}
\newcommand{\la}{\langle}
\newcommand{\ra}{\rangle}
\newcommand{\da}{\!\downarrow}
\newcommand{\ua}{\!\uparrow}
\newcommand{\imp}{\rightarrow}
\newcommand{\biimp}{\leftrightarrow}
\newcommand{\Nb}{\mathbb{N}}
\newcommand{\Qb}{\mathbb{Q}}
\newcommand{\Scal}{\mathcal{S}}
\newcommand{\smf}{\smallfrown}
\newcommand{\str}{2^{<\Nb}}
\newcommand{\llb}{\llbracket}
\newcommand{\rrb}{\rrbracket}
\newcommand{\cyl}[1]{\llb {#1} \rrb}
\renewcommand{\setminus}{\smallsetminus}
\newcommand{\bad}{\mathrm{Bad}}
\DeclareMathOperator{\atoms}{\mathrm{atoms}}
\DeclareMathOperator{\dnrf}{\mathrm{DNR}}
\DeclareMathOperator{\rng}{\mathrm{ran}}
\setlist*[enumerate]{label=\arabic*.,leftmargin=1cm}
\setlist*[itemize]{leftmargin=1cm}
\DeclareMathOperator{\true}{\textup{\texttt{T}}}
\DeclareMathOperator{\false}{\textup{\texttt{F}}}
\newcommand{\set}[1]{\left\{ #1 \right\}}
\newcommand{\card}[1]{\left| #1 \right|}
\newcommand{\tuple}[1]{\left\langle #1 \right\rangle}
\newcommand{\s}[1]{\ensuremath{{\sf #1}}}
\newcommand{\dnrs}[1]{#1\mbox{-}\s{DNR}}
\newcommand{\ran}[1]{#1\mbox{-}\s{RAN}}
\newcommand{\rwkls}[1]{#1\mbox{-}\s{RWKL}}
\newcommand{\wwkls}[1]{#1\mbox{-}\s{WWKL}}
\newcommand{\rwwkls}[1]{#1\mbox{-}\s{RWWKL}}
\DeclareMathOperator{\rca}{\s{RCA}_0}
\DeclareMathOperator{\aca}{\s{ACA}_0}
\DeclareMathOperator{\wklz}{\s{WKL}_0}
\DeclareMathOperator{\kl}{\s{KL}}
\DeclareMathOperator{\wkl}{\s{WKL}}
\DeclareMathOperator{\wwkl}{\s{WWKL}}
\DeclareMathOperator{\dnr}{\s{DNR}}
\DeclareMathOperator{\bsig}{\s{B}\Sigma}
\DeclareMathOperator{\bpi}{\s{B}\Pi}
\DeclareMathOperator{\isig}{\s{I}\Sigma}
\DeclareMathOperator{\ipi}{\s{I}\Pi}
\DeclareMathOperator{\bst}{\bsig^0_2}
\DeclareMathOperator{\iso}{\s{I}\Sigma^0_1}
\DeclareMathOperator{\rkl}{\s{RKL}}
\DeclareMathOperator{\rwkl}{\s{RWKL}}
\DeclareMathOperator{\rwwkl}{\s{RWWKL}}
\DeclareMathOperator{\lrwkl}{\s{LRWKL}}
\DeclareMathOperator{\lrwwkl}{\s{LRWWKL}}
\DeclareMathOperator{\rt}{\s{RT}}
\DeclareMathOperator{\srt}{\s{SRT}}
\DeclareMathOperator{\crt}{\s{CRT}}
\DeclareMathOperator{\rrt}{\s{RRT}}
\DeclareMathOperator{\cads}{\s{CADS}}
\DeclareMathOperator{\cac}{\s{CAC}}
\DeclareMathOperator{\scac}{\s{SCAC}}
\DeclareMathOperator{\sat}{\s{SAT}}
\DeclareMathOperator{\rsat}{\s{RSAT}}
\DeclareMathOperator{\lrsat}{\s{LRSAT}}
\DeclareMathOperator{\rcolor}{\s{RCOLOR}}
\DeclareMathOperator{\lrcolor}{\s{LRCOLOR}}
\DeclareMathOperator{\coh}{\s{COH}}
\DeclareMathOperator{\pizog}{\Pi^0_1\s{G}}
\DeclareMathOperator{\fip}{\s{FIP}}
\DeclareMathOperator{\emo}{\s{EM}}
\DeclareMathOperator{\semo}{\s{SEM}}
\DeclareMathOperator{\ndtip}{\s{\bar{D}_2IP}}
\DeclareMathOperator{\opt}{\s{OPT}}
\DeclareMathOperator{\amt}{\s{AMT}}
\DeclareMathOperator{\atr}{\s{ATR}_0}
\DeclareMathOperator{\pica}{\mathrm{\Pi}^1_1-\s{CA}_0}
\definecolor{lightblue}{rgb}{.60,.60,1}
\definecolor{lightred}{rgb}{1,.60,.60}
\title{On the logical strengths of partial solutions to mathematical problems}
\author{Laurent Bienvenu}
\address{
LIRMM, CNRS \& Universit\'e de Montpellier, 161 rue Ada, 34095 Montpellier Cedex 5
}
\email{laurent.bienvenu@computability.fr}
\urladdr{\url{http://www.lirmm.fr/~bienvenu}}
\author{Ludovic Patey}
\address{
Department of Mathematics, University of California, Berkeley, CA 94720, USA}
\email{ludovic.patey@computability.fr}
\urladdr{\url{http://www.ludovicpatey.com}}
\author{Paul Shafer}
\address{Department of Mathematics,
Ghent University,
Krijgslaan 281, S22,
9000 Ghent,
Belgium}
\email{paul.shafer@ugent.be}
\urladdr{\url{http://cage.ugent.be/~pshafer/}}
\thanks{Laurent Bienvenu and Ludovic Patey are funded by the John Templeton Foundation (`Structure and Randomness in the Theory of Computation' project). The opinions expressed in this publication are those of the authors and do not necessarily reflect the views of the John Templeton Foundation.  Paul Shafer is an FWO Pegasus Long Postdoctoral Fellow.  He was also supported by the Fondation Sciences Math\'ematiques de Paris.}
\date{\today}
\begin{document}

\begin{abstract}
We use the framework of reverse mathematics to address the question of, given a mathematical problem, whether or not it is easier to find an infinite partial solution than it is to find a complete solution.  Following Flood~\cite{Flood2012}, we say that a \emph{Ramsey-type} variant of a problem is the problem with the same instances but whose solutions are the infinite partial solutions to the original problem.  We study Ramsey-type variants of problems related to K\"onig's lemma, such as restrictions of K\"onig's lemma, Boolean satisfiability problems, and graph coloring problems.  We find that sometimes the Ramsey-type variant of a problem is strictly easier than the original problem (as Flood showed with weak K\"onig's lemma) and that sometimes the Ramsey-type variant of a problem is equivalent to the original problem.  We show that the Ramsey-type variant of weak K\"onig's lemma is \emph{robust} in the sense of Montalb\'an~\cite{Montalban2011Open}:  it is equivalent to several perturbations.  We also clarify the relationship between Ramsey-type weak K\"onig's lemma and algorithmic randomness by showing that Ramsey-type weak weak K\"onig's lemma is equivalent to the problem of finding diagonally non-recursive functions and that these problems are strictly easier than Ramsey-type weak K\"onig's lemma.  This answers a question of Flood.
\end{abstract}


\maketitle

\section{Introduction}

This work presents a detailed study of the question \emph{given some mathematical problem, is it easier to find an infinite partial solution than it is to find a complete solution?}\ that was implicitly raised by Flood's work in~\cite{Flood2012}.  By `mathematical problem,' we simply mean any theorem from ordinary mathematics that can be easily formulated in the language of instances and solutions in the sense illustrated by the key example of K\"onig's lemma.  K\"onig's lemma states that every infinite, finitely branching tree has an infinite path.  The corresponding problem is thus that of finding an infinite path through a given infinite, finitely branching tree.  The problem's instances are the infinite, finitely branching trees $T$, and the solutions to a given instance $T$ are the infinite paths through $T$.

Formally, we consider $\Pi^1_2$ statements of the form
\begin{align*}
\forall X(\varphi(X) \imp \exists Y \psi(X,Y)).
\end{align*}
Every such statement corresponds to a problem whose instances are the sets $X$ such that $\varphi(X)$ and whose solutions to a given instance $X$ are the sets $Y$ such that $\psi(X,Y)$.  In the example of K\"onig's lemma, $\varphi(X)$ expresses that $X$ is an infinite, finitely branching tree, and $\psi(X,Y)$ expresses that $Y$ is an infinite path through $X$.

The problems we consider come with natural notions of infinite partial solutions.  Again, consider K\"onig's lemma, where we specify that an infinite, finitely branching tree means an infinite, finitely branching subtree of $\Nb^{<\Nb}$.  For such a tree $T$, a path through $T$, which we think of as a \emph{complete solution} to the instance $T$, is a function $f \colon \Nb \imp \Nb$ such that $\forall n(\la f(0), f(1), \dots, f(n-1) \ra \in T)$.  An infinite \emph{partial solution} to the instance $T$ is then a function $g \colon X \imp \Nb$ for an infinite $X \subseteq \Nb$ such that there is a function $f \colon \Nb \imp \Nb$ that extends $g$ and is a path through $T$.  Following Flood~\cite{Flood2012}, we call the variant of a problem in which we ask not for complete solutions but for infinite partial solutions the \emph{Ramsey-type} variant of the problem.  Thus, for example, \emph{Ramsey-type K\"onig's lemma} is the problem of producing an infinite partial path (in the sense described above) through an infinite, finitely-branching tree.  The label `Ramsey-type' comes from an analogy with the infinite versions of Ramsey's theorem.  Any infinite subset of an infinite homogenous set for some coloring is also an infinite homogeneous set for that coloring.  The Ramsey-type variant of a problem has this same flavor:  an infinite piece of a partial solution to some instance of the problem is also a partial solution to that same instance.

Thus given a mathematical problem, we ask whether or not it can be solved using its Ramsey-type variant.  If the answer is positive, then finding partial solutions to the problem is just as hard as finding complete solutions.  If the answer is negative, then it is easier to find partial solutions than it is to find complete solutions.  \emph{Reverse mathematics}, a foundational program whose aim is to classify the theorems of ordinary (i.e., non-set-theoretic) mathematics according to their provability strengths, provides an appropriate framework in which to analyze such questions.  In reverse mathematics, theorems are formalized in the language of second-order arithmetic (which even suffices for theorems concerning the structure of the real line or analysis on complete separable metric spaces), and the implications among them are studied over a base theory called $\rca$.  Roughly speaking, the theorems provable in $\rca$ are those that are computable in the sense illustrated by the example of the intermediate value theorem.  Given a continuous real-valued function which is negative at $0$ and positive at $1$, one can compute an $x \in (0,1)$ such that $f(x)=0$ essentially by using the usual interval-halving procedure.  This argument can be formalized to a proof of the intermediate value theorem in $\rca$ (see~\cite{Simpson2009} Theorem~II.6.6).

Implication over $\rca$ provides a natural classification of logical strength.  We think of a theorem $\varphi$ as being at least as strong as a theorem $\psi$ if $\varphi \imp \psi$ can be proved in $\rca$.  Similarly, we think of $\varphi$ and $\psi$ as having equivalent strength if $\varphi \biimp \psi$ can be proved in $\rca$.  Thus we may, for example, formalize the question of whether or not it is easier to find partial paths through infinite, finitely branching trees than it is to find complete paths by asking whether or not the statement ``for every infinite, finitely branching tree there exists an infinite partial path implies K\"onig's lemma'' can be proved in $\rca$.  Flood~\cite{Flood2012} was the first to consider such questions and he showed (among other results) that the Ramsey-type variant of weak K\"onig's lemma (which is K\"onig's lemma restricted to infinite, binary branching trees) is indeed easier than weak K\"onig's lemma.  In contrast, we show that the forgoing example of K\"onig's lemma for arbitrary infinite, finitely branching trees is equivalent to its Ramsey-type variant (Theorem~\ref{thm-RKLisACA} below).  Thus for some problems it is easier to find infinite partial solutions and for other problems it is not. 

Much of the present work is dedicated to understanding the relationships among Flood's Ramsey-type variant of weak K\"onig's lemma (henceforth `$\rwkl$'), Ramsey-type variants of other problems, and problems that are well-studied in reverse mathematics.  For example, Flood proved that $\rwkl$ is strictly weaker than weak K\"onig's lemma and at least as strong as $\dnr$ (an important statement defined in Section~\ref{sec:DNR}), but he left as an open question whether or not $\rwkl$ is strictly stronger than $\dnr$.  We answer Flood's question by showing that $\rwkl$ is indeed strictly stronger than $\dnr$ (Corollary~\ref{cor:DNRNoImpRWKL} below), and we also show that $\dnr$ is equivalent to the Ramsey-type variant of weak weak K\"onig's lemma (which is K\"onig's lemma restricted to binary branching trees of positive measure; Theorem~\ref{thm:dnr-rwwkl} below).\footnote{These results have been independently proven by Flood and Towsner~\cite{Flood2014Separating}.}  Thus $\rwkl$ is distinct from every theorem previously studied in the context of reverse mathematics.  This raises the question of whether $\rwkl$ is a sort of logical artifact or whether $\rwkl$ characterizes the logical strength of a fundamental mathematical idea.  We propose that $\rwkl$ is indeed fundamental, in no small part because the basic question that inspires $\rwkl$, that is, the question of whether or not it is easier to find an infinite partial solution to a problem than to find a complete solution, is so natural.  In order to provide further support for $\rwkl$, we prove a number of theorems which, together, suggest that $\rwkl$ is \emph{robust} in the informal sense proposed by Montalb\'an~\cite{Montalban2011Open}.  Theorem~\ref{thm:lrwkl-rwkl} shows that $\rwkl$ is equivalent to several small perturbations.  Much more significantly, in Section~\ref{sec-RSAT} and Section~\ref{sec-RCOLOR} we show that $\rwkl$ is equivalent to several quite large perturbations.  In these sections, we consider statements that are equivalent to weak K\"onig's lemma (compactness for propositional logic in Section~\ref{sec-RSAT} and graph coloring in Section~\ref{sec-RCOLOR}) and show that their corresponding Ramsey-type variants are equivalent to $\rwkl$.

$\rwkl$ is also of significant technical interest because it provides a sufficient amount of compactness for many separation results concerning Ramsey-type statements.  For example, Seetapun's theorem~\cite{seetapun1995strength} (separating $\rt^2_2$ from $\aca$), Wang's separation of the free set and thin set theorems from $\aca$~\cite{wang2013some}, and various recent separations of Patey~\cite{Patey2015strength,Patey2015weakness} can be streamlined by using models of $\rwkl$ in place of models of $\wkl$.   Many computability-theoretic properties are preserved by both $\rt^2_2$ and $\wkl$, such as cone avoidance~\cite{seetapun1995strength}, hyperimmunity~\cite{Patey2015Iterativea}, and fairness~\cite{Patey2015strength}.  Explicit use of models of $\rwkl$ is helpful when proving that $\rt^2_2$ preserves a property which is \emph{not} preserved by $\wkl$, such as constant-bound-enumeration avoidance~\cite{coneavoidclosed}.  In particular, Liu's theorems~\cite{Liu,coneavoidclosed}, that $\rt^2_2$ does not imply $\wkl$ or $\wwkl$, can be simplified by making explicit use of models of $\rwkl$. In this sense, using of models of $\rwkl$ rather than of $\wkl$ is more general because it facilitates proving preservations of more computability-theoretic properties.

The paper is organized as follows.  In the next section, we present the necessary reverse mathematics background.  In Section~\ref{sec-rtkl-variants}, we study several Ramsey-type variants of full, bounded, weak, and weak weak K\"onig's lemma.  The remainder of the paper focuses on Ramsey-type variants of theorems equivalent to weak K\"onig's lemma.  In Section~\ref{sec-RSAT}, we study Ramsey-type variants of the compactness theorem for propositional logic.  In Section~\ref{sec-RCOLOR}, we study Ramsey-type variants of graph coloring theorems.  In Section~\ref{sec-nonImp}, we prove several non-implications concerning the Ramsey-type theorems, including that $\dnr$ does not imply $\rwkl$.

\subsection{Basic notation}
We follow the standard notation from computability theory.  $(\Phi_e)_{e \in \Nb}$ is an effective list of all partial recursive functions.  $W_e = \dom(\Phi_e)$ is the $e\textsuperscript{th}$ r.e.\ set. These relativize to any oracle~$X$, and we denote the corresponding lists by $(\Phi^X_e)_{e \in \Nb}$ and $(W^X_e)_{e \in \Nb}$.

Identify each $k \in \Nb$ with the set $\{0,1,\dots,k-1\}$.  For $k \in \Nb \cup \{\Nb\}$ and $s \in \Nb$, $k^s$ is the set of strings of length $s$ over $k$, $k^{<s}$ is the set of strings of length $<s$ over $k$, $k^{<\Nb}$ is the set of finite strings over $k$, and $k^{\Nb}$ is the set of infinite strings over $k$.  The length of a finite string $\sigma$ is denoted $|\sigma|$.  For $i \in \Nb$ and $\sigma$ a finite or infinite string, $\sigma(i)$ is the $(i+1)\textsuperscript{th}$ value of $\sigma$.  For finite or infinite strings $\sigma$ and $\tau$, $\sigma$ is a prefix of $\tau$ (written $\sigma \preceq \tau$) if $\dom(\sigma) \subseteq \dom(\tau)$ and $(\forall i \in \dom(\sigma))(\sigma(i) = \tau(i))$.  For an $n \in \Nb$ and a string (finite or infinite) $\sigma$ of length $\geq n$, $\sigma \restriction n = \la \sigma(0), \sigma(1), \dots, \sigma(n-1) \ra$ is the initial segment of $\sigma$ of length $n$. 

A tree is a set $T \subseteq \Nb^{<\Nb}$ such that $\forall \sigma \forall \tau(\sigma \in T \andd \tau \preceq \sigma \imp \tau \in T)$. If $T$ is a tree and $s \in \Nb$, then $T^s$ is the set of strings in $T$ of length $s$.  An $f \in \Nb^{\Nb}$ is a path through a tree $T$ if $(\forall n \in \Nb)(f \restriction n \in T)$.  The set of paths through $T$ is denoted $[T]$.

For $k \in \Nb \cup \{\Nb\}$, the space $k^{\Nb}$ is topologized by viewing it as $\prod_{i \in \Nb}k$, giving each copy of $k$ the discrete topology, and giving the product the product topology.  Basic open sets, also called cylinders, are sets of the form $\cyl{\sigma} = \{f \in k^{\Nb} : f \succeq \sigma\}$ for $\sigma \in k^{<\Nb}$.  Open sets are of the form $\bigcup_{\sigma \in W}\cyl{\sigma}$ for $W \subseteq k^{<\Nb}$.  If the set $W$ is an r.e.\ subset of $k^{<\Nb}$, then $\bigcup_{\sigma \in W}\cyl{\sigma}$ is said to be \emph{r.e.} (or \emph{effectively}) \emph{open}.  We identify the space $2^{\Nb}$ of infinite binary strings with $\mathcal{P}(\Nb)$ by equating each subset of $\Nb$ with its characteristic string as usual.  $2^{\Nb}$ is compact, and its clopen sets are exactly the finite unions of cylinders.  The \emph{uniform} (or \emph{Lebesgue}) \emph{measure} $\mu$ on $2^{\Nb}$ is the Borel probability measure for which $(\forall \sigma \in 2^{<\Nb})(\mu(\cyl{\sigma}) = 2^{-|\sigma|})$.

It is a convention, when working in second-order arithmetic, to use the symbol `$\omega$' to refer to the standard natural numbers and to use the symbol `$\Nb$' to refer to the first-order part of a possibly non-standard model of some fragment of arithmetic.  We follow this convention.  For example, the definitions above use `$\Nb$' because they are intended to be interpreted in possibly non-standard models.  We use `$\omega$' when we explicitly build a structure whose first-order part is standard.

\section{Reverse mathematics background}\label{sec-rm-background}

Reverse mathematics is a foundational program, introduced by Friedman~\cite{Friedman} and developed by Friedman and by Simpson, whose goal is to classify the theorems of ordinary mathematics according to their provability strengths.  Simpson's book~\cite{Simpson2009} is the standard reference.  A truly remarkable phenomenon is that five equivalence classes, called the Big Five (in order of increasing strength:  $\rca$, $\wklz$, $\aca$, $\atr$, and $\pica$), emerge and classify the majority of usual theorems.  The Big Five classes also have satisfying interpretations as the ability to perform well-known computability-theoretic operations.  For example, $\rca$ corresponds to the ability to perform Turing reductions and Turing joins, whereas $\aca$ corresponds to the ability to perform Turing reductions, Turing joins, and Turing jumps.

There is, however, a notable family of theorems which are not classified by the Big Five.  These are what we call the \emph{Ramsey-type} theorems, perhaps the most famous of which is Ramsey's theorem for pairs and two colors.  Since the seminal paper of Cholak, Jockusch, and Slaman~\cite{cholak2001strength}, an abundant literature has developed surrounding the strength of Ramsey's theorem for pairs and related theorems, such as chain-antichain, ascending or descending sequence, and the Erd\H{o}s-Moser theorem (see, for example, \cite{hirschfeldt2007combinatorial} and \cite{lerman2013separating}).  These Ramsey-type theorems do not typically have nice computability-theoretic characterizations of their equivalence classes.  

We are primarily concerned with the logical relationships among combinatorial statements (specifically Ramsey-type statements) provable in the system $\aca$.  Thus we now summarize several of the subsystems of second-order arithmetic below $\aca$ and the relationships among them.

\subsection{Recursive comprehension, weak K\"onig's lemma, and arithmetical comprehension}
First we summarize the induction, bounding, and comprehension schemes and three of the most basic subsystems of second-order arithmetic.  Everything stated here is explained in full detail in~\cite{Simpson2009}.

Full second-order arithmetic consists of the \emph{basic axioms}:

\[
\begin{array}{ll}
\forall m (m+1 \neq 0) & \forall m \forall n (m \times (n+1) = (m \times n) + m)\\
\forall m \forall n(m+1 = n+1 \imp m=n) & \forall m \forall n (m < n+1 \biimp (m < n \orr m = n))\\
\forall m (m+0 = m) & \forall m \neg(m < 0)\\
\forall m \forall n(m+(n+1) = (m+n)+1) & \forall m (m \times 0 = 0) \\
\end{array}
\]

the {\em induction axiom}:
\begin{align*}
\forall X((0 \in X \andd \forall n(n \in X \imp n+1 \in X)) \imp \forall n(n \in X));
\end{align*}
and the {\em comprehension scheme}, which consists of the universal closures of all formulas of the form
\begin{align*}
\exists X \forall n(n \in X \biimp \varphi(n)),
\end{align*}
where $\varphi$ is any formula in the language of second-order arithmetic in which $X$ is not free.  We obtain subsystems of second-order arithmetic by limiting induction and comprehension to predicates of a prescribed complexity.

For each $n \in \omega$, the $\Sigma^0_n$ ($\Pi^0_n$) \emph{induction scheme}, denoted $\isig^0_n$ ($\ipi^0_n$), consists of the universal closures of all formulas of the form
\begin{align*}
[\varphi(0) \andd \forall n(\varphi(n) \imp \varphi(n+1))] \imp \forall n \varphi(n),
\end{align*}
where $\varphi$ is $\Sigma^0_n$ ($\Pi^0_n$).  The induction schemes are closely related to the bounding (also called collection) schemes.  For each $n \in \omega$, the $\Sigma^0_n$ ($\Pi^0_n$) \emph{bounding scheme}, denoted $\bsig^0_n$ ($\bpi^0_n$), consists of the universal closures of all formulas of the form
\begin{align*}
\forall a[(\forall n < a)(\exists m)\varphi(n,m) \imp \exists b(\forall n < a)(\exists m < b)\varphi(n,m)],
\end{align*}
where $\varphi$ is $\Sigma^0_n$ ($\Pi^0_n$).

The \emph{arithmetical comprehension scheme} consists of the universal closures of all formulas of the form
\begin{align*}
\exists X \forall n(n \in X \biimp \varphi(n)),
\end{align*}
where $\varphi$ is an arithmetical formula in which $X$ is not free.  A further restriction of comprehension is the $\Delta^0_1$ \emph{comprehension scheme}, which consists of the universal closures of all formulas of the form
\begin{align*}
\forall n (\varphi(n) \biimp \psi(n)) \imp \exists X \forall n(n \in X \biimp \varphi(n)),
\end{align*}
where $\varphi$ is $\Sigma^0_1$, $\psi$ is $\Pi^0_1$, and $X$ is not free in $\varphi$.

$\rca$ (for \emph{recursive comprehension axiom}) encapsulates recursive mathematics and is the usual base system used when comparing the logical strengths of statements of second-order arithmetic.  The axioms of $\rca$ are the basic axioms, $\iso$, and the $\Delta^0_1$ comprehension scheme.

$\rca$ proves sufficient number-theoretic facts to implement the codings of finite sets and sequences that are typical in computability theory.  Thus inside $\rca$, we can fix an enumeration $(\Phi_e)_{e \in \Nb}$ of the partial recursive functions.  We can also interpret the existence of the set $\Nb^{<\Nb}$ of all finite strings and give the usual definition of a tree as subset of $\Nb^{<\Nb}$ that is closed under initial segments.

Weak K\"onig's lemma ($\wkl$) is the statement ``every infinite subtree of $\str$ has an infinite path,''  and $\wklz$ is the subsystem $\rca + \wkl$.  $\wklz$ captures compactness arguments, and it is strictly stronger than $\rca$ (i.e., $\rca \nvdash \wkl$).

$\aca$ (for \emph{arithmetical comprehension axiom}) is the subsystem axiomatized by the basic axioms, the induction axiom, and the arithmetical comprehension scheme.  It can also be obtained by adding the arithmetical comprehension scheme to $\rca$.  $\aca$ is strictly stronger than $\wklz$, and all of the statements that we consider are provable in $\aca$.

\subsection{Ramsey's theorem and its consequences}
Let $S \subseteq \Nb$ and $n \in \Nb$.  $[S]^n$ denotes the set of $n$-element subsets of $S$, typically thought of as coded by the set of strictly increasing $n$-tuples over $S$. 

\begin{definition}[Ramsey's theorem]
Fix $n, k \in \Nb$ with $n,k > 0$.  A set $H$ is \emph{homogeneous} for a coloring $f : [\Nb]^n \to k$ (or \emph{$f$-homogeneous}) if there is a color $c < k$ such that $f([H]^n) = \{c\}$.  A coloring $f : [\Nb]^n \to k$ is \emph{stable} if for every $\sigma \in [\Nb]^{n-1}$ there is a color $c$ such that $(\exists m)(\forall s > m)(f(\sigma,s) = c)$.  $\rt^n_k$ is the statement ``for every coloring $f : [\Nb]^n \to k$, there is an infinite $f$-homogeneous set.''  $\srt^n_k$ is the restriction of $\rt^n_k$ to stable colorings.
\end{definition}

\begin{definition}[Cohesiveness]\label{def-COH}
Let $\vec R =  (R_i)_{i \in \Nb}$ be a sequence of subsets of $\Nb$.  A set $C \subseteq \Nb$ is called \emph{$\vec R$-cohesive} if $C$ is infinite and $\forall i(C \subseteq^* R_i \orr C \subseteq^* \overline{R_i})$, where $A \subseteq^* B$ means that $A \setminus B$ is finite.  $\coh$ is the statement ``for every sequence of sets $\vec{R}$, there is an $\vec{R}$-cohesive set.''
\end{definition}

For every fixed $n \in \omega$ with $n \geq 3$, the statement $(\forall k \geq 2)\rt^n_k$ is equivalent to $\aca$ over $\rca$.  Indeed, the statement $\rt^3_2$ is already equivalent to $\aca$ over $\rca$ (see~\cite{Simpson2009} Theorem~III.7.6).  Much work was motivated by the desire to characterize the logical strength of $\rt^2_2$.  Among many results, Cholak, Jockusch, and Slaman~\cite{cholak2001strength} (with a bug-fix in \cite{miletipartition}) showed that $\rt^2_2$ splits into $\coh$ and $\srt^2_2$ over $\rca$:  $\rca \vdash \rt^2_2 \biimp \coh \andd \srt^2_2$.  By work of Chong, Slaman, and Yang~\cite{chongmetamathematics}, $\srt^2_2$ is strictly weaker than $\rt^2_2$ over $\rca$.  By work of Hirst~\cite{Hirst1987} and Liu~\cite{Liu}, $\rt^2_2$ and $\srt^2_2$ are independent of $\wkl$ over $\rca$.

\begin{definition}[Chain-antichain]
A \emph{partial order} $P = (P, \leq_P)$ consists of a set $P \subseteq \Nb$ together with a reflexive, antisymmetric, transitive, binary relation $\leq_P$ on $P$.  A \emph{chain} in $P$ is a set $S \subseteq P$ such that $(\forall x, y \in S)(x \leq_P y \orr y \leq_P x)$.  An \emph{antichain} in $P$ is a set $S \subseteq P$ such that $(\forall x, y \in S)(x \neq y \imp x |_P y)$ (where $x |_P y$ means that $x \nleq_P y \andd y \nleq_P x$).  A partial order $(P, \leq_P)$ is \emph{stable} if either $(\forall i \in P)(\exists s)[(\forall j > s)(j \in P \imp i \leq_P j) \orr (\forall j > s)(j \in P \imp i \mid_P j)]$ or $(\forall i \in P)(\exists s)[(\forall j > s)(j \in P \imp i \geq_P j) \orr (\forall j > s)(j \in P \imp i \mid_P j)]$.  $\cac$ is the statement ``every infinite partial order has an infinite chain or an infinite antichain.''  $\scac$ is the restriction of $\cac$ to stable partial orders.
\end{definition}

Hirschfeldt and Shore give a detailed study of $\cac$ and $\scac$ (and many other principles) in~\cite{hirschfeldt2007combinatorial}.  They show that $\rca \vdash \cac \biimp \coh \andd \scac$ and that, over $\rca$, $\scac$ is strictly weaker than $\cac$ and $\cac$ is strictly weaker than $\rt^2_2$.

\begin{definition}[The Erd\H{o}s-Moser theorem]
A \emph{tournament} $T = (D,T)$\footnote{The notational convention is that a partial order $P = (P, \leq_P)$ is identified with its underlying set, whereas a tournament $T = (D,T)$ is identified with its relation.} consists of a set $D \subseteq \Nb$ and an irreflexive binary relation on $D$ such that for all $x,y \in D$ with $x \neq y$, exactly one of $T(x,y)$ and $T(y,x)$ holds.  A tournament $T$ is \emph{transitive} if the relation $T$ is transitive in the usual sense.  A tournament $T$ is \emph{stable} if $(\forall x \in D)(\exists n)[(\forall y > n)(y \in D \imp T(x,y)) \orr (\forall y > n)(y \in D \imp T(y,x))]$.  A \emph{sub-tournament} of $T$ is a tournament of the form $(E, E^2 \cap T)$ for an $E \subseteq D$.  $\emo$ is the statement ``for every infinite tournament there is an infinite transitive sub-tournament.''  $\semo$ is the restriction of $\emo$ to stable tournaments.
\end{definition}  

It is easy to see that $\rca \vdash \rt^2_2 \imp \emo$ and that $\rca \vdash \srt^2_2 \imp \semo$.  Furthermore, $\semo$ is strictly weaker than $\emo$ over $\rca$.  This can be deduced from the fact that $\rca \vdash \emo \imp \dnrs{2}$(Joseph Miller, personnal communication; see Section~\ref{sec:DNR} below for the definition of $\dnrs{2}$) and the fact that there is a (non-standard) model of $\rca + \srt^2_2$ (and hence of $\rca + \semo$) that contains only low sets~\cite{chongmetamathematics} (see~\cite{PateyRainbow} for a complete explanation).  By work of Bovykin and Weiermann~\cite{bovykin2005strength} and of Lerman, Solomon, and Towsner~\cite{lerman2013separating}, $\emo$ and $\semo$ are strictly weaker than $\rt^2_2$ over $\rca$ and are independent of $\cac$ and $\scac$ over $\rca$.

\subsection{Weak weak K\"onig's lemma and Martin-L\"of randomness}\label{sec:WWKL&MLR}
Let $T \subseteq 2^{<\Nb}$ be a tree and let $q \in \Qb$.  The measure of (the set of paths through) $T$ is $\geq q$ (written $\mu(T) \geq q$) if $\forall s(2^{-s}|T^s| \geq q)$ (recall that $T^s$ is the set of strings in $T$ of length $s$).  A tree $T \subseteq 2^{<\Nb}$ has \emph{positive measure}, written $\mu(T) > 0$, if $(\exists q \in \Qb)(q > 0 \andd \mu(T) \geq q)$.  Weak weak K\"onig's lemma ($\wwkl$), introduced by Yu and Simpson~\cite{YuSimpson}, is the statement ``every subtree of $2^{<\Nb}$ with positive measure has an infinite path.''  $\wwkl$ is strictly weaker than $\wkl$ over $\rca$~\cite{YuSimpson}.  It is well-known that, over $\rca$, $\wwkl$ is equivalent to $\ran{1}$, which is the statement ``for every set $X$, there is a set $Y$ that is Martin-L\"of random relative to $X$'' (see~\cite{AvigadDeanRute}, for example).

Avigad, Dean, and Rute~\cite{AvigadDeanRute} generalize $\wwkl$ to $\wwkls{n}$ for each $n \in \omega$ with $n \geq 1$.  Informally, $\wwkls{n}$ asserts that if $X$ is a set and $T \subseteq 2^{<\Nb}$ is a tree of positive measure that is recursive in $X^{(n-1)}$, then $T$ has an infinite path.  Care must be taken to formalize $\wwkls{n}$ without implying the existence of $X^{(n-1)}$ or of $T$.  For $n \in \omega$ with $n \geq 1$, let $e \in X^{(n)}$ abbreviate the formula
\begin{align*}
(Q x_{n-2}) \dots (\exists x_1)(\forall x_0)(\exists \sigma \preceq X)[\Phi_e^\sigma(\la x_{n-2}, \dots, x_0, 0 \ra)\da].
\end{align*}
The quantifier `$Q$' is `$\forall$' if $n$ is even and is `$\exists$' if $n$ is odd.  In the case $n=1$, the formula is simply $(\exists \sigma \preceq X)[\Phi_e^\sigma(\la 0 \ra)\da]$.  Let $\sigma \preceq X^{(n)}$ abbreviate the formula $\sigma \in 2^{<\Nb} \andd (\forall e < |\sigma|)(\sigma(e) = 1 \biimp e \in X^{(n)})$.  Let $\Phi_e^{X^{(n)}}(x) = y$ abbreviate the formula $(\exists \sigma \preceq X^{(n)})(\Phi_e^\sigma(x) = y)$.  If $\varphi(\sigma)$ is a formula defining a subtree of $2^{<\Nb}$ and $q \in \Qb$, then that the measure of this tree is $\geq q$ can be expressed by a formula that states that for every $n$ there is a sequence $\la \sigma_0, \sigma_1, \dots, \sigma_{k-1} \ra$ of distinct strings in $2^n$ such that $k2^{-n} \geq q$ and $(\forall i < k)\varphi(\sigma_i)$.  Similarly, that the measure of the tree defined by $\varphi$ is positive can be expressed by a formula that says that there is a rational $q > 0$ such that the measure of the tree is $\geq q$.

\begin{definition}
For $n \in \omega$ with $n \geq 2$, $\wwkls{n}$ is the statement ``for every $X$ and $e$, if $\Phi_e^{X^{(n-1)}}$ is the characteristic function of a subtree of $2^{<\Nb}$ with positive measure, then this tree has an infinite path.''  (That is, there is a function $f \colon \Nb \imp 2$ such that $\forall m [\Phi_e^{X^{(n-1)}}(f \restriction m) = 1]$.)
\end{definition}

Avigad, Dean, and Rute~\cite{AvigadDeanRute} also generalize $\ran{1}$ to $\ran{n}$, which is a formalization of the statement ``for every $X$ there is a $Y$ that is $n$-random relative to $X$,'' for all $n \in \omega$ with $n \geq 1$.  They prove that the correspondence between $\wwkls{1}$ and $\ran{1}$ also generalizes to all $n$ once $\bsig^0_n$ is added to $\ran{n}$:  for every $n \in \omega$ with $n \geq 1$, $\wwkls{n}$ and $\ran{n} + \bsig^0_n$ are equivalent over $\rca$.  Notice that this implies that for every $n \in \omega$ with $n \geq 1$, $\rca + \wwkls{n} \vdash \bsig^0_n$.

\subsection{Diagonally non-recursive functions}\label{sec:DNR}
A function $f: \Nb \rightarrow \Nb$ is \emph{diagonally non-recursive} ($\dnrf$) if $\forall e(f(e) \neq \Phi_e(e))$ and is \emph{diagonally non-recursive relative to a set $X$} ($\dnrf(X)$) if $\forall e(f(e) \neq \Phi_e^X(e))$.  An important characterization is that a set computes a $\dnrf$ function if and only if it computes a \emph{fixed-point free} function, i.e., a function $g \colon \Nb \imp \Nb$ such that $\forall e(W_{g(e)} \neq W_e)$.

\begin{definition}\label{def:DNR}
$\dnr$ is the statement ``for every $X$ there is a function $f$ such that $\forall e(f(e) \neq \Phi_e^X(e))$.'' 
\end{definition}

It is clear that no $\dnrf$ function is recursive and therefore that $\rca \nvdash \dnr$.  On the other hand, it is a classical result of Ku\v{c}era~\cite{Kucera1985} that every Martin-L\"of random set computes a $\dnr$ function, and its proof readily relativizes and easily formalizes in $\rca$.  Therefore $\rca \vdash \wwkl \rightarrow \dnr$.  By work of Ambos-Spies, Kjos-Hanssen, Lempp, and Slaman~\cite{AmbosSpies2004}, $\dnr$ is strictly weaker than $\wwkl$ over $\rca$.

As with weak weak K\"onig's lemma and Martin-L\"of randomness, we can define a hierarchy of principles expressing the existence of diagonally non-recursive functions.  For every $n \in \omega$ with $n \geq 1$, we generalize $\dnr$ to $\dnrs{n}$, which is a formalization of the statement ``for every $X$ there exists a function that is diagonally non-recursive relative to $X^{(n-1)}$.''

\begin{definition}\label{def:nDNR}
$\dnrs{n}$ is the statement ``for every $X$ there is a function $f$ such that $\forall e(f(e) \neq \Phi_e^{X^{(n-1)}}(e))$''.  
\end{definition}

Of course, the `$\Phi_e^{X^{(n-1)}}(e)$' in the above definition should be interpreted as it is in Section~\ref{sec:WWKL&MLR}.  Again, $\rca \vdash \wwkls{n} \rightarrow \dnrs{n}$.  We prove this via $\ran{n}$ to avoid the use of $\bsig^0_n$.

\begin{theorem}
$\rca \vdash \ran{n} \imp \dnrs{n}$.
\end{theorem}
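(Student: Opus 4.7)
The argument is Ku\v{c}era's classical proof that every Martin-L\"of random set computes a $\dnr$ function~\cite{Kucera1985}, relativized to $X^{(n-1)}$ and formalized inside $\rca$. Reason in $\rca + \ran{n}$, fix a set $X$, and apply $\ran{n}$ to obtain a $Y$ that is $n$-random relative to $X$; the plan is to decode $Y$ into natural numbers blockwise in a way that diagonalizes against $\Phi_e^{X^{(n-1)}}$.

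For each $k \in \Nb$, partition $\Nb$ into consecutive intervals $(I_e^{(k)})_{e \in \Nb}$ with $|I_e^{(k)}| = e + k + 2$, let $c_e^{(k)} \colon 2^{|I_e^{(k)}|} \to \Nb$ be the standard binary-to-integer map, and for $Z \in 2^{\Nb}$ write $Z[I_e^{(k)}]$ for the bit-string obtained by reading $Z$ along $I_e^{(k)}$. Consider the open set
\[
V_k = \bigcup_{e \in \Nb} \bigl\{Z \in 2^{\Nb} : \Phi_e^{X^{(n-1)}}(e) = c_e^{(k)}(Z[I_e^{(k)}])\bigr\},
\]
where the predicate $\Phi_e^{X^{(n-1)}}(e) = m$ is parsed as in Section~\ref{sec:WWKL&MLR}. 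In that reading, membership of $Z$ in $V_k$ is a $\Sigma^0_n$-in-$X$ predicate, uniformly in $k$. For each $e$, the $e$-th piece of the union is either empty (if $\Phi_e^{X^{(n-1)}}(e)$ diverges or exceeds $2^{|I_e^{(k)}|}$) or a single cylinder of measure $2^{-(e+k+2)}$, so by countable subadditivity
\[
\mu(V_k) \leq \sum_{e \in \Nb} 2^{-(e+k+2)} = 2^{-(k+1)} \leq 2^{-k}.
\]
Thus $(V_k)_{k \in \Nb}$ is a Martin-L\"of $\Sigma^0_n$-test relative to $X$.

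Since $Y$ is $n$-random relative to $X$, $Y \notin \bigcap_k V_k$ and hence there is some $k_0$ with $Y \notin V_{k_0}$. Define $f(e) = c_e^{(k_0)}(Y[I_e^{(k_0)}])$; this total function is recursive in $Y$ with $k_0$ as a parameter, and so exists in the model by $\Delta^0_1$-comprehension. The condition $Y \notin V_{k_0}$ amounts exactly to $f(e) \neq \Phi_e^{X^{(n-1)}}(e)$ for every $e \in \Nb$, providing the required instance of $\dnrs{n}$ for $X$; as $X$ was arbitrary, $\dnrs{n}$ follows.

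The main technical obstacle is verifying, in the Avigad-Dean-Rute formalization of $\ran{n}$ and of $\Phi_e^{X^{(n-1)}}$ from Section~\ref{sec:WWKL&MLR}, that $(V_k)_{k \in \Nb}$ genuinely qualifies as a test an $n$-random set must avoid. This comes down to confirming the uniform $\Sigma^0_n$-in-$X$ complexity of the defining predicate and bounding $\mu(V_k)$ inside $\rca$ using only the $\Sigma^0_1$ induction available there to sum the geometric series, so that the argument nowhere secretly invokes $\bsig^0_n$---crucial because the theorem is deliberately proved via $\ran{n}$ to sidestep $\bsig^0_n$.
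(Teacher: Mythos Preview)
Your proof is correct and follows the same Ku\v{c}era strategy as the paper: build a $\Sigma^{0,X}_n$-test from the diagonal values $\Phi_e^{X^{(n-1)}}(e)$, invoke $n$-randomness of $Y$ to escape some level of the test, and read a $\dnrf(X^{(n-1)})$ function off $Y$. The only real difference is in bookkeeping. The paper uses the overlapping initial segments $Y\restriction e$, so the resulting $f$ is only \emph{almost} $\dnrf$ (it may fail at the finitely many $e$ below the escaped level), and a short S-m-n patch is needed at the end to repair those values. Your disjoint-interval scheme, with the extra parameter $k$ controlling both the test level and the block lengths, arranges that escaping $V_{k_0}$ already gives $f(e)\neq\Phi_e^{X^{(n-1)}}(e)$ for \emph{every} $e$, so no repair step is needed. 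Both variants are standard; yours is marginally cleaner, the paper's keeps the test definition slightly simpler.
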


\begin{proof}
Let $X$ be given, and, by $\ran{n}$, let $Y$ be $n$-random relative to $X$.  Define $f \colon \Nb \imp \Nb$ by $\forall e(f(e) = \text{the number whose binary expansion is $Y \restriction e$})$.  We show that $f$ is almost $\dnrf$ relative to $X^{(n-1)}$.  Consider the sequence $(\mathcal{U}_i)_{i \in \Nb}$ defined by
\begin{align*}
\mathcal{U}_i = \{Z : (\exists e > i)(\exists \sigma \in 2^e)(\text{the binary expansion of $\Phi_e^{X^{(n-1)}}(e)$ is $\sigma$ and $\sigma \preceq Z$})\}.
\end{align*}
$(\mathcal{U}_i)_{i \in \Nb}$ is a uniform sequence of strict (in the sense of~\cite{AvigadDeanRute}) $\Sigma^{0,X}_n$ sets, and $\forall i(\mu(\mathcal{U}_i) \leq 2^{-i})$ because $\mathcal{U}_i$ contains at most one string of length $e$ for each $e > i$.  Thus $(\mathcal{U}_i)_{i \in \Nb}$ is a $\Sigma^{0,X}_n$-test.  Therefore $Y \notin \mathcal{U}_i$ for some $i \in \Nb$.  Suppose for a moment that $f(e) = \Phi_e^{X^{(n-1)}}(e)$ for an $e > i$.  This means that $\Phi_e^{X^{(n-1)}}(e)$ is the number whose binary expansion is $Y \restriction e$ and thus that $\cyl{Y \restriction e} \subseteq \mathcal{U}_i$, a contradiction.  Therefore $f$ is $\dnrf$ relative to $X^{(n-1)}$ at all $e > i$.  For each $e \leq i$, we can effectively find an index $m_e$ such that $\forall \sigma \forall x (\Phi_{m_e}^\sigma(x) = \Phi_e^\sigma(e))$.  Thus $f(m_e) \neq \Phi_{m_e}^{X^{(n-1)}}(m_e) = \Phi_e^{X^{(n-1)}}(e)$.  So we may obtain a function that is $\dnrf$ relative to $X^{(n-1)}$ by changing $f(e)$ to $f(m_e)$ for all $e \leq i$.
\end{proof}

It follows that $\rca \vdash \wwkls{n} \imp \dnrs{n}$ because $\rca \vdash \wwkls{n} \imp \ran{n}$.  By work of Slaman~\cite{slaman2011first}, $\rca + \ran{2} \nvdash \bsig^0_2$, so we may also conclude that $\rca + \dnrs{2} \nvdash \bsig^0_2$.

\subsection{Ramsey-type weak K\"onig's lemma}\label{subsect:ramsey-konig}
In~\cite{Flood2012}, Flood introduced the principle \emph{Ramsey-type weak K\"onig's lemma}, a simultaneous weakening of $\wkl$ and $\rt^2_2$.  Informally, $\rwkl$ states that if $T \subseteq 2^{<\Nb}$ is an infinite tree, then there is an infinite set $X$ that is either a subset of a path through $T$ or disjoint from a path through $T$ (when thinking of the paths through $T$ as characteristic strings of subsets of $\Nb$).  When formalizing $\rwkl$, care must be taken to avoid implying the existence of a path through $T$ and hence implying $\wkl$.

\begin{definition}\label{def-RWKL}
A set $H \subseteq \Nb$ is \emph{homogeneous} for a $\sigma \in 2^{<\Nb}$ if $(\exists c < 2)(\forall i \in H)(i < |\sigma| \imp \sigma(i)=c)$, and a set $H \subseteq \Nb$ is \emph{homogeneous} for an infinite tree $T \subseteq 2^{<\Nb}$ if the tree $\{\sigma \in T : \text{$H$ is homogeneous for $\sigma$}\}$ is infinite.  $\rwkl$ is the statement ``for every infinite subtree of $2^{<\Nb}$, there is an infinite homogeneous set.''
\end{definition}

\begin{remark}\label{rem:rkl-renamed}
Flood actually named his principle $\s{RKL}$, for \emph{Ramsey-type K\"onig's lemma}.  We found it more convenient to refer to this principle as $\rwkl$.  Indeed, we study Ramsey-type variations of several principles, and the convention we follow is to add an `$\s{R}$' to a principle's name to denote its Ramsey-type variation (see, for example, $\rsat$, $\rcolor_n$, and $\rwwkl$ below).  The typical scheme is to view a combinatorial principle as a problem comprised of instances and solutions to these instances.  For example, with $\wkl$, an instance would be an infinite subtree of $2^{<\Nb}$, and a solution to that instance would be a path through the tree.  The Ramsey-type variation of a principle has the same class of instances, but instead of asking for a full solution in the problem's original sense, we ask only for an infinite set consistent with being a solution.
\end{remark}

Flood~\cite{Flood2012} proved that $\rca \vdash \wkl \imp \rwkl$ and that $\rca \vdash \srt^2_2 \imp \rwkl$.  He also noted that $\rwkl$ is strictly weaker than both $\wkl$ and $\srt^2_2$ over $\rca$ because $\wkl$ and $\srt^2_2$ are independent over $\rca$.  The result $\rca \vdash \srt^2_2 \imp \rwkl$ can be improved to $\rca \vdash \semo \imp \rwkl$, which we show now.

\begin{theorem}
$\rca \vdash \semo \imp \rwkl$.\footnote{Obtained independently by Flood and Towsner~\cite{Flood2014Separating}.}
\end{theorem}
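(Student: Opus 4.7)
The plan is to adapt Flood's proof that $\rca \vdash \srt^2_2 \imp \rwkl$, replacing the use of $\srt^2_2$ by an application of $\semo$ to a stable tournament derived from the tree. Let $T \subseteq 2^{<\Nb}$ be an infinite tree. For each $n$, let $\sigma_n$ denote the lex-least element of $T^n$; this is definable in $\rca$ since $T$ is infinite. A preliminary fact, provable in $\rca$, is that for each fixed $x$ the bit $\sigma_n(x)$ is eventually constant as $n \to \infty$, with limiting value $p(x) \in \{0,1\}$; the argument proceeds by induction on the prefix length, using that whether some $\sigma \in T^n$ extends a fixed $\tau \in 2^{<n}$ is downward-stable in $n$ (once no such extension exists, none exists thereafter), so the greedy prefix $\sigma_n \uh (x+1)$ stabilizes.

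I then define a tournament $R$ on $\Nb$ by declaring, for $x < y$, that $R(x,y)$ holds iff $\sigma_{y+1}(x) = 0$, and $R(y,x)$ holds otherwise. Stability of $R$ follows immediately from the preliminary fact: for each $x$, the $R$-direction to $y$ is eventually constantly $[p(x) = 0]$ as $y \to \infty$. Applying $\semo$ produces an infinite transitive sub-tournament $H \subseteq \Nb$. The guiding observation is that for each $y \in H$ the prefix $\sigma_{y+1} \uh y$ lies in $T$, has length $y$, and takes value $0$ at a position $x \in H \cap [0,y)$ exactly when $R(x,y)$ holds; consequently $\sigma_{y+1} \uh y$ is $H$-homogeneous precisely when $y$ is $R$-extremal (maximum or minimum in $R|_H$) in $H \cap [0,y]$.

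It remains to extract from $H$ an infinite subset $H'$ along which such $R$-extremality occurs infinitely often; the corresponding prefixes $\sigma_{y+1} \uh y$ will then witness that $H'$ is $\rwkl$-homogeneous for $T$. For this extraction I use the interaction of transitivity and stability: every $x \in H$ has $p(x) \in \{0,1\}$ determining whether $x$ is eventually $R$-below or $R$-above larger elements of $H$, splitting $H$ into $\Delta^0_2$ pieces $H \cap p^{-1}(0)$ and $H \cap p^{-1}(1)$; within the infinite piece the $R$-order and the $\Nb$-order are asymptotically compatible, so any sufficiently sparse subsequence is $R$-monotone and consists of $R$-extremal elements in its own initial segments. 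The main obstacle is performing this extraction inside $\rca$ rather than inside $\rca + \bsig^0_2$ or $\rca + \ads$, since the partition into the two $p$-classes is genuinely $\Delta^0_2$ and hence not available as a set; the argument must use the linear-order structure on $H$ delivered by $\semo$ (not merely the stability already implicit in the coloring $c(x,y) = \sigma_{y+1}(x)$) to thin $H$ incrementally from the finite data visible in each $\sigma_n$, avoiding any appeal to the $p$-classes themselves.
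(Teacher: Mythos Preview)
Your setup matches the paper's exactly: the leftmost strings $\sigma_n$, the tournament $R$ built from their bits, stability of $R$, and the application of $\semo$ to obtain a transitive $U$. Your observation linking $R$-extremality of $y$ in $U\cap[0,y]$ to homogeneity of $\sigma_{y+1}\restriction y$ is also correct and is morally the engine of the proof.

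The gap is precisely where you say it is, and you have not closed it. You explain that the $\Delta^0_2$ partition into $p$-classes is unavailable in $\rca$ and that ``the argument must use the linear-order structure on $H$ \dots\ to thin $H$ incrementally,'' but you give no actual mechanism for doing this. This is the entire content of the proof beyond Flood's $\srt^2_2$ argument; without it you have only reproduced the $\srt^2_2$ proof and observed that $\semo$ gives you a linear order you haven't yet used.

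The paper's resolution is a $\Sigma^0_1$ dichotomy that exploits transitivity directly. Call $\tau\in U^{<\Nb}$ \emph{bad} if $\tau$ is $R$-increasing and $\rng(\tau)$ is \emph{not} homogeneous for $T$ with color $1$. One checks, using transitivity of $U$, that the last entry of any bad $\tau$ must lie in the limit-$0$ class (cofinitely many $s$ satisfy $R(s,\tau(|\tau|-1))$). Now split on whether the last entries of bad sequences are bounded. If bounded by $m$, then $U\setminus\{0,\dots,m\}$ is homogeneous for $T$ with color $1$ outright (any finite counterexample, listed in $R$-order, would be a bad $\tau$ with large last entry). If unbounded, the last entries form an infinite $\Sigma^0_1$ set all of whose members are limit-$0$, and this set can be greedily thinned in $\rca$ (each step is a recursive search guaranteed to succeed) to a set homogeneous for $T$ with color $0$. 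This dichotomy is exactly the ``incremental thinning from finite data'' you gesture at; supplying it would complete your argument along the same lines as the paper.
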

\begin{proof}
Let $T \subseteq 2^{<\Nb}$ be an infinite tree.  For each $s \in \Nb$, let $\sigma_s$ be the leftmost element of~$T^s$.  We define a tournament $R$ from the tree $T$.  For $x < s$, if $\sigma_s(x)=1$, then $R(x,s)$ holds and $R(s,x)$ fails; otherwise, if $\sigma_s(x) = 0$, then $R(x,s)$ fails and $R(s,x)$ holds. This tournament $R$ is essentially the same as the coloring $f(x,s)=\sigma_s(x)$ defined by Flood in his proof that $\rca \vdash \srt^2_2 \imp \rwkl$ (\cite{Flood2012}~Theorem~5), in which he showed that $f$ is stable.  By the same argument, $R$ is stable. 

Apply $\semo$ to $R$ to get an infinite transitive sub-tournament $U$.  Say that a $\tau \in U^{<\Nb}$ satisfies~$(\star)$ if $\rng(\tau)$ is not homogenous for $T$ with color $1$ and $(\forall k < |\tau|)R(\tau(k),\tau(k+1))$.  Consider a hypothetical $\tau \in U^{<\Nb}$ satisfying $(\star)$.  There must be a $k < |\tau|$ such that $R(s,\tau(k))$ for cofinitely many $s$.  This is because otherwise there would be infinitely many $s$ such that $(\forall k < |\tau|)R(\tau(k),s)$ and hence infinitely many $s$ for which $\rng(\tau)$ is homogeneous for $\sigma_s$ with color $1$, contradicting that $\rng(\tau)$ is not homogeneous for $T$ with color $1$.  From the facts that $R(s,\tau(k))$ for cofinitely many $s$, that $(\forall k < |\tau|)R(\tau(k),\tau(k+1))$, and that $U$ is transitive, we conclude that $R(s,\tau(|\tau|-1))$ for cofinitely many $s$.

The proof now breaks into two cases.  First, suppose that the $\tau(|\tau|-1)$ for the $\tau \in U^{<\Nb}$ satisfying~$(\star)$ are unbounded.  Then, because $(\star)$ is a $\Sigma^0_1$ property of $U$, there is an infinite set $X$ consisting of numbers of the form $\tau(|\tau|-1)$ for $\tau \in U^{<\Nb}$ satisfying~$(\star)$.  As argued above, every $x \in X$ satisfies $R(s,x)$ for cofinitely many $s$.  Thus we can thin out $X$ to an infinite set $H$ such that $(\forall x,y \in H)(x < y \imp R(y,x))$. Thus~$H$ is homogeneous for $T$ with color $0$ because $H$ is homogeneous for $\sigma_y$ with color $0$ for every $y \in H$.

Second, suppose that the $\tau(|\tau|-1)$ for the $\tau \in U^{<\Nb}$ satisfying $(\star)$ are bounded, say by $m$.  Then $H = U \setminus \{0,1,\dots,m\}$ is homogeneous for $T$ with color $1$.  To see this, suppose not.  Then there is a finite $V \subseteq H$ that is not homogeneous for $T$ with color $1$.  Let $\tau \in V^{<\Nb}$ be the enumeration of $V$ in the order given by $R$:  $(\forall k < |\tau|)R(\tau(k),\tau(k+1))$.  Then $\tau$ satisfies $(\star)$, but $\tau(|\tau|-1) > m$.  This is a contradiction.
\end{proof}

Flood also proved that $\rca \vdash \rwkl \imp \dnr$, and this result prompted him to ask if $\rca \vdash \dnr \imp \rwkl$.  Corollary~\ref{cor:DNRNoImpRWKL} shows that the answer to this question is negative.

\section{Ramsey-type K\"onig's lemma and its variants}\label{sec-rtkl-variants}

We investigate the strengths of several variations of $\rwkl$.  Our variations are obtained in one of two ways.  First, we consider Ramsey-type K\"onig's lemma principles applied to different classes of trees.  We show that when we restrict to trees of positive measure, the resulting principle is equivalent to $\dnr$ (Theorem~\ref{thm:dnr-rwwkl}); that when we allow subtrees of $k^{<\Nb}$ (for a fixed $k \in \omega$ with $k \geq 2$), the resulting principle is equivalent to $\rwkl$ (Theorem~\ref{thm:lrwkl-rwkl}); that when we allow bounded subtrees of $\Nb^{<\Nb}$, the resulting principle is equivalent to $\wkl$ (Theorem~\ref{thm-RbWKLisWKL}); and that when we allow arbitrary finitely-branching subtrees of $\Nb^{<\Nb}$, the resulting principle is equivalent to $\aca$ (Theorem~\ref{thm-RKLisACA}).  Second, we impose additional requirements on the homogeneous sets that $\rwkl$ asserts exist.  If we require that homogeneous sets be homogeneous for color $0$ (and restrict to trees that have no paths that are eventually $1$), then the resulting principle is equivalent to $\wkl$ (Theorem~\ref{thm:fixed-rwkl-wkl}).  If we impose a bound on the sparsity of the homogeneous sets, then the resulting principle is also equivalent to $\wkl$ (Theorem~\ref{thm:packed-rwkl-wkl}).  If we require that the homogeneous sets be subsets of some prescribed infinite set, then the resulting principle is equivalent to $\rwkl$ (Theorem~\ref{thm:lrwkl-rwkl}).  It is interesting to note that each variation of $\rwkl$ that we consider is either equivalent to $\rwkl$ itself or some other well-known statement.  We also note that sometimes the Ramsey-type variant of a principle is equivalent to the original principle, as with K\"onig's lemma for bounded trees and K\"onig's lemma for arbitrary finitely-branching trees; and that sometimes the Ramsey-type variant of a principle is strictly weaker than the original principle, as with weak K\"onig's lemma and weak weak K\"onig's lemma.

Several results in this section indicate robustness in $\rwkl$.  For example, we may generalize $\rwkl$ to subtrees of $k^{<\Nb}$ (for fixed $k \in \omega$ with $k \geq 2$) without changing the principle's strength.  We explore the robustness of $\rwkl$ more fully in Section~\ref{sec-RSAT} and Section~\ref{sec-RCOLOR}.  This robustness we take as evidence that $\rwkl$ is a natural principle.

\subsection{\texorpdfstring{$\dnrf$}{DNR} functions and subsets of paths through trees of positive measure}\label{sect:rwwkl-as-dnr}
Just as $\wkl$ can be weakened to $\wwkl$ by restricting to trees of positive measure, so can $\rwkl$ be weakened to $\rwwkl$ by restricting to trees of positive measure.

\begin{definition} \label{def:RWWKL}
$\rwwkl$ is the statement ``for every subtree of $2^{<\Nb}$ with positive measure, there is an infinite homogeneous set.''
\end{definition}

Applying $\rwwkl$ to a tree in which every path is Martin-L\"of random yields an infinite subset of a Martin-L\"of random set, and every infinite subset of every Martin-L\"of random set computes a $\dnrf$ function.  In fact, computing an infinite subset of a Martin-L\"of random set is equivalent to computing a $\dnr$ function, as the following theorem states.

\begin{theorem}[Kjos-Hanssen~\cite{KjosHanssen2009}, Greenberg and Miller~\cite{GreenbergM2009}]\label{thm:miller-dnc}
For every $A \in 2^\omega$, $A$ computes a $\dnrf$ function if and only if $A$ computes an infinite subset of a Martin-L\"of random set.
\end{theorem}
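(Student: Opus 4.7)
The plan is to prove the two directions separately.

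For the direction $(\Leftarrow)$, suppose $A$ computes an infinite $B = \{b_0 < b_1 < \cdots\}$ with $B \subseteq R$ for some Martin-L\"of random $R \in 2^\omega$. I would define the $B$-computable function $g(e) = \la b_0, b_1, \ldots, b_{2e} \ra$, and verify that $g$ is $\dnrf$ on cofinitely many inputs. To this end, I would introduce the Solovay test $(\Scal_e)_{e \in \omega}$ where $\Scal_e$ is the $\Sigma^0_1$ class obtained by waiting for $\Phi_e(e)$ to converge to some $v$ and, if $v$ decodes to a tuple $(a_0, \ldots, a_{2e})$ of distinct naturals, enumerating the cylinder $\{X \in 2^\omega : a_0, \ldots, a_{2e} \in X\}$ into $\Scal_e$. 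Then $\mu(\Scal_e) \leq 2^{-(2e+1)}$, which is summable. Whenever $g(e) = \Phi_e(e)$, the tuple $(b_0, \ldots, b_{2e})$ consists of distinct naturals all lying in $R$ (since $B \subseteq R$), so $R \in \Scal_e$. As $R$ passes every Solovay test, this happens for only finitely many $e$, and a standard padding argument (build a computable $h$ with $\Phi_{h(e)}(x) = \Phi_e(e)$ and $h(e)$ past the exception set, and output $g \circ h$) yields a total $\dnrf$ function computable from $A$.

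For the direction $(\Rightarrow)$, suppose $A$ computes a $\dnrf$ function $f$. The goal is to construct an infinite $B \leq_T A$ such that $B \subseteq R$ for some (non-effectively obtained) Martin-L\"of random $R$. Following Greenberg and Miller, I would build $B$ stage-by-stage along a carefully designed $\Pi^0_1$ tree $T$ whose nodes code finite approximations to $B$, with a node $\sigma$ being placed in $T$ precisely when the corresponding finite $B_\sigma$ still admits enough ML-random supersets in a quantitative sense drawn from the universal ML-test $(U_n)_{n \in \omega}$. At each stage, $f$ would be used to select which extension of the current node to take, with the key observation being that this selection reduces to diagonalizing against a c.e.\ list of ``fatal'' options and hence is within the power of any $\dnrf$ function.

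\textbf{Main obstacle.} The hard direction is $(\Rightarrow)$. The crux is that $\dnrf$ is strictly weaker than $\wwkl$ (by the result of Ambos-Spies, Kjos-Hanssen, Lempp, and Slaman mentioned earlier), so $f$ cannot compute an ML-random, nor even a path through an arbitrary positive-measure $\Pi^0_1$ class. One must therefore arrange that each stage's choice among extensions reduces to a \emph{bounded} diagonalization rather than a genuine path-selection, and that the infinite $B$ produced is contained in some ML-random even though no single such random is ever computed. Structuring the tree $T$ so that this delicate balance holds, and verifying that the union of the chosen nodes really does produce an infinite $B$ lying in some random, is the heart of the argument.
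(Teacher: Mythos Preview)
The paper does not give its own proof of this theorem: it is stated as a cited result of Kjos-Hanssen and of Greenberg and Miller, and is used only as motivation for the subsequent Theorem~\ref{thm:dnr-rwwkl}.  So there is no in-paper proof to compare against.

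That said, your sketch is correct and follows the standard literature arguments.  The $(\Leftarrow)$ direction via a Solovay test on the first $2e+1$ elements of $B$ is exactly Kjos-Hanssen's argument, and your padding fix for the finitely many failures is fine.  Your outline of the $(\Rightarrow)$ direction correctly identifies the Greenberg--Miller strategy and its main difficulty.

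It is worth noting that the paper \emph{does} prove a closely related statement, namely $\rca \vdash \dnr \rightarrow \rwwkl$ (Theorem~\ref{thm:dnr-rwwkl}), and the argument there is essentially a finitized, $\rca$-formalizable version of the hard direction you sketch.  Rather than working with a $\Pi^0_1$ tree and the universal Martin-L\"of test, the paper uses Lemma~\ref{lemma:dnr-rwwkl_helper} (a $\dnrf$ function can avoid any r.e.\ set once given a bound on its size) together with an explicit combinatorial claim bounding $|\{n : \mu(T \cap \Gamma^0_n) < 2^{-2c}\}|$ by $2c$ whenever $\mu(T) \geq 2^{-c}$.  This replaces the measure-theoretic machinery you allude to with elementary counting, which is what allows the argument to go through in $\rca$.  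If you wanted to flesh out your $(\Rightarrow)$ sketch, that claim is precisely the quantitative fact that makes each stage's choice a bounded diagonalization.
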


Theorem~\ref{thm:miller-dnc} also relativizes:  a set $A$ computes a $\dnrf(X)$ function if and only if it computes an infinite subset of a set that is Martin-L\"of random relative to $X$.  Thus one reasonably expects that $\dnr$ and $\rwwkl$ are equivalent over $\rca$.  This is indeed the case, as we show.  The proof makes use of the following recursion-theoretic lemma, which reflects a classical fact concerning diagonally non-recursive functions.

\begin{lemma}\label{lemma:dnr-rwwkl_helper}
The statement ``for every set $X$ there is a function $g \colon \Nb^3 \imp \Nb$ such that $\forall e,k,n(g(e,k,n) > n \andd (|W_e^X| < k \imp g(e,k,n) \notin W_e^X))$'' is provable in $\rca + \dnr$.
\end{lemma}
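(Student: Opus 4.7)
The plan is to apply $\dnr$ to $X$ to obtain a total function $f \colon \Nb \to \Nb$ with $f(m) \neq \Phi_m^X(m)$ for every $m$, and then to construct $g$ as an $(X \oplus f)$-recursive function by an iterative use of the $s$-$m$-$n$ theorem relative to $X$.

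For each input $(e, k, n)$, I would proceed step by step. At step $i$, having already computed $f(q_0), \ldots, f(q_{i-1})$, I would use the $s$-$m$-$n$ theorem with these values as parameters to construct an index $q_i$ such that $\Phi_{q_i}^X(q_i)$ enumerates the elements of $\{0, 1, \ldots, n\} \cup W_e^X$ (listing $\{0, 1, \ldots, n\}$ in order first and then the elements of $W_e^X$ as they appear) and outputs the first element not already belonging to $\{f(q_0), \ldots, f(q_{i-1})\}$. By the DNR property, $f(q_i) \neq \Phi_{q_i}^X(q_i)$ whenever the latter converges. I would then take $g(e, k, n)$ to be the first $f(q_j)$ that is certified, by inspecting the enumeration of $W_e^X$ up to a sufficient stage, to lie outside $\{0, 1, \ldots, n\} \cup W_e^X$.

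When $|W_e^X| < k$, the set $\{0, 1, \ldots, n\} \cup W_e^X$ has at most $n + k$ elements, so after at most $n + k$ iterations the construction must produce an escape: otherwise the accumulating pool $\{f(q_0), \ldots, f(q_i)\}$ would, together with the DNR constraints, eventually saturate $\{0, 1, \ldots, n\} \cup W_e^X$ and cause $\Phi_{q_{i+1}}^X(q_{i+1})$ to diverge, at which point a case analysis on the earlier values $f(q_j)$ already yields one that is $> n$ and not in $W_e^X$.

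The hardest step will be justifying that the iteration does not cycle indefinitely through repeated values inside $\{0, 1, \ldots, n\} \cup W_e^X$. The construction must be arranged so that the parameters $f(q_0), \ldots, f(q_{i-1})$ at step $i$ force the next $\Phi_{q_i}^X(q_i)$ to diagonalize against a genuinely uncovered element; a careful pigeonhole argument combining the DNR inequalities at $n + k + 1$ successive indices then guarantees an escape within $n + k$ iterations.
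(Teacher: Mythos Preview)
Your proposal has a genuine gap in two places.

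First, and most seriously, $g$ must be a total function computable from $X \oplus f$, but your definition of $g(e,k,n)$ as ``the first $f(q_j)$ certified to lie outside $\{0,\ldots,n\}\cup W_e^X$'' is not effective. Membership in $W_e^X$ is only $\Sigma^{0,X}_1$; there is no finite stage at which one can certify $f(q_j) \notin W_e^X$, since elements may enter $W_e^X$ arbitrarily late. So as written your $g$ is at best partial.

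Second, the pigeonhole argument for escape fails. The DNR condition at step $i$ says only that $f(q_i)$ differs from the \emph{first} element of $\{0,\ldots,n\}\cup W_e^X$ outside $\{f(q_0),\ldots,f(q_{i-1})\}$; it does not prevent $f(q_i)$ from repeating an earlier $f(q_j)$. Concretely, if $A = \{0,\ldots,n\}\cup W_e^X$ has at least two elements $a < b$, the sequence $f(q_0)=f(q_1)=\cdots=b$ is fully compatible with all the DNR constraints (each $\Phi_{q_i}^X(q_i)=a$), and the iteration never escapes. So no bound of $n+k$ steps can be established.

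The paper's proof avoids both issues with a single encoding step rather than an iterative search. Fix surjections $b_k\colon\Nb\to\Nb^k$ with infinite fibers, build indices $c(e,i,k)$ so that $\Phi_{c(e,i,k)}^X$ is constant with value the $i$th coordinate of $b_k$ applied to the $(i{+}1)$st element enumerated into $W_e^X$, and set $g(e,k,n)$ to be the least $x>n$ with $b_k(x)=\langle f(c(e,0,k)),\ldots,f(c(e,k-1,k))\rangle$. This is immediately total, and if $g(e,k,n)$ were the $(i{+}1)$st element of $W_e^X$ for some $i<k$, decoding would force $f(c(e,i,k))=\Phi_{c(e,i,k)}^X(c(e,i,k))$, contradicting DNR. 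The essential point is that a single output must simultaneously encode all $k$ DNR values; no search among individual $f(q_j)$'s can substitute for this, because you can never effectively identify which one avoids $W_e^X$.
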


\begin{proof}
Fix a sequence of functions $(b_k)_{k \in \Nb}$ such that, for each $k \in \Nb$, $b_k$ maps $\Nb$ onto $\Nb^k$ in such a way that $b_k^{-1}(\vec x)$ is infinite for every $\vec x \in \Nb^k$.  Let $c \colon \Nb \imp \Nb$ be a function such that, for all $e,i,k,x \in \Nb$ with $i < k$, $\Phi_{c(e,i,k)}^X(x) = b_k(y)(i)$ for the $(i+1)$\textsuperscript{th} number $y$ enumerated in $W_e^X$ if $|W_e^X| \geq i+1$; and $\Phi_{c(e,i,k)}^X(x)\ua$ otherwise.  Let $f$ be diagonally non-recursive relative to $X$.  Define $g$ by letting $g(e,k,n)$ be the least $x > n$ such that $b_k(x) = \la f(c(e,0,k)), f(c(e,1,k)), \dots, f(c(e,k-1,k)) \ra$.  Suppose for a contradiction that $|W_e^X| < k$ but that $g(e,k,n) \in W_e^X$.  Then $g(e,k,n)$ is the $(i+1)$\textsuperscript{th} number enumerated into $W_e^X$ for some $i+1 < k$.  Hence $\Phi_{c(e,i,k)}^X(c(e,i,k)) = b_k(g(e,k,n))(i)$.  However, by the definition of $g$, $b_k(g(e,k,n))(i) = f(c(e,i,k))$.  Thus $f(c(e,i,k)) = \Phi_{c(e,i,k)}^X(c(e,i,k))$, contradicting that $f$ is $\dnrf$ relative to $X$.
\end{proof}

Notice that in the statement of the above lemma, $W_e^X$ need not exist as a set.  Thus `$|W_e^X| < k$' should be interpreted as `$\forall s(|W_{e,s}^X| < k)$,' where $(W_{e,s}^X)_{s \in \Nb}$ is the standard enumeration of $W_e^X$.

\begin{theorem}\label{thm:dnr-rwwkl}
$\rca \vdash \dnr \biimp \rwwkl$.\footnote{Obtained independently by Flood and Towsner~\cite{Flood2014Separating}.}
\end{theorem}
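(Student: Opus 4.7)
The plan is to prove the two implications $\rca + \rwwkl \vdash \dnr$ and $\rca + \dnr \vdash \rwwkl$ separately.

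For the reverse direction, given a set $X$, I would build an $X$-recursive subtree $T \subseteq 2^{<\Nb}$ of measure at least $3/4$ whose every infinite path is Martin-L\"of random relative to $X$. A standard choice is the tree of $\sigma$ not refuted, by stage $|\sigma|$, by the level-$2$ component of the universal ML-test relative to $X$. Applying $\rwwkl$ to $T$ produces an infinite homogeneous set $H$, and I would then compute a $\dnrf(X)$ function from $H \oplus X$ via a Kjos-Hanssen / Greenberg-Miller-style argument: for each input $e$, locate inside the $H$-homogeneous subtree of $T$ a string $\sigma$ long enough to cover an $e$-indexed window, and read off $f(e)$ from that window; the construction of $T$ ensures that the designated window avoids the value $\Phi_e^X(e)$ on cofinally many such $\sigma$'s, so that a careful choice of $f$ gives $f(e) \neq \Phi_e^X(e)$ throughout.

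For the forward direction, given $T \subseteq 2^{<\Nb}$ with $\mu(T) \geq q > 0$, the first step is to apply Lemma~\ref{lemma:dnr-rwwkl_helper} with oracle $T$ to obtain the function $g$. I would then inductively build $H = \{h_0 < h_1 < \ldots\}$ together with the restricted subtrees
\[
T^{(n)} = \{\sigma \in T : \sigma(h_i) = 0 \text{ for all } i < n \text{ with } h_i < |\sigma|\},
\]
maintaining the slow-decay invariant $\mu(T^{(n)}) \geq q \cdot 2^{-n(n+3)/2}$. At stage $n+1$, I would declare $h > h_n$ \emph{bad} if the subtree of $T^{(n)}$ obtained by further requiring $\sigma(h) = 0$ has measure strictly less than $q \cdot 2^{-(n+1)(n+4)/2}$; call this $\Sigma^{0,T}_1$ set $B_n$. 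A counting argument combining the inclusion-exclusion lower bound
\[
\mu\bigl(T^{(n)} \cap \{\sigma(h_i) = 1 \text{ for all } i \leq k\}\bigr) > \mu(T^{(n)}) - k \cdot q \cdot 2^{-(n+1)(n+4)/2}
\]
with the trivial upper bound $2^{-k}$ and the current invariant bounds $|B_n|$ by a function of $n$ and $\lceil\log_2(1/q)\rceil$ that is computable in $\rca$. Applying $g$ to an index enumerating $B_n$ together with this bound selects $h_{n+1} > h_n$ outside $B_n$, and by construction $\mu(T^{(n+1)}) \geq q \cdot 2^{-(n+1)(n+4)/2}$, propagating the invariant. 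Since each $T^{(n)}$ has positive measure it is infinite, so $H$ is $0$-homogeneous for $T$.

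The hard part will be the forward direction, specifically the joint calibration of the measure invariant and the threshold defining $B_n$. Neither the naive invariant $\mu(T^{(n)}) \geq q/2^n$ paired with a measure-halving threshold nor an infinity-only invariant paired with an ``$h$ kills the tree'' threshold works on its own: with the naive invariant, $|B_n|$ can be unbounded because many positions may be mildly biased towards $1$ without contradicting $\mu(T^{(n)}) \geq q/2^n$; and the infinity-only invariant preserves infinity but loses the quantitative measure lower bound needed to bound $|B_{n+1}|$ at the next stage. The superpolynomial-decay invariant $q \cdot 2^{-n(n+3)/2}$ together with the absolute per-stage threshold $q \cdot 2^{-(n+1)(n+4)/2}$ creates enough arithmetic slack for the inclusion-exclusion estimate to dominate and force a computable bound on $|B_n|$ while simultaneously keeping $\mu(T^{(n)}) > 0$ for all $n$.
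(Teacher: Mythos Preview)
Your overall architecture for both directions matches the paper's: for $\rwwkl \Rightarrow \dnr$ you build a positive-measure tree and extract a $\dnrf$ function from a homogeneous set, and for $\dnr \Rightarrow \rwwkl$ you use the helper lemma to pick $h_{n+1}$ outside an r.e.\ set of ``bad'' positions while maintaining a positive-measure invariant on the restricted tree.  So the strategy is right.

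There is, however, a genuine calibration error in your forward direction.  With your invariant $\mu(T^{(n)}) \geq q\cdot 2^{-n(n+3)/2}$ and threshold $q\cdot 2^{-(n+1)(n+4)/2}$, your counting argument yields: if $|B_n| \geq k$ then $M < 2^{-k} + k\epsilon$, where $M = q\cdot 2^{-n(n+3)/2}$ and $\epsilon = M\cdot 2^{-(n+2)}$.  To bound $|B_n|$ you need some $k$ with $M \geq 2^{-k} + k\epsilon$.  But for small $n$ and small $q$ no such $k$ exists.  Take $q = 1/8$, $n = 0$: then $M = 1/8$, $\epsilon = 1/32$, and $2^{-k} + k/32 > 1/8$ for every $k \geq 1$ (for $k \leq 3$ the $2^{-k}$ term alone is $\geq 1/8$, and for $k \geq 4$ the $k/32$ term alone is $\geq 1/8$).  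So your ``computable bound on $|B_n|$'' does not exist at the first stage, and the construction cannot start.  The gap between $M$ and $\epsilon$ is only a factor of $2^{n+2}$, which is too small to beat both the $2^{-k}$ term and the $k\epsilon$ term simultaneously when $M$ is already small.

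The paper fixes this by taking the threshold to be the \emph{square} of the current lower bound: writing $\mu(T) \geq 2^{-c}$ with $c \geq 3$, one declares $n$ bad if $\mu(T \cap \Gamma^0_n) < 2^{-2c}$, and then the same union-bound argument gives $2^{-c} - 2^{-2c} \leq 2c \cdot 2^{-2c}$, i.e.\ $2^c - 1 \leq 2c$, a contradiction for $c \geq 3$.  Hence $|B| < 2c$ always.  Iterating, the exponent doubles at each stage, so the invariant becomes $\mu(T \cap \Gamma^0_{H_s}) \geq 2^{-c\cdot 2^s}$ and the bad-set bound at stage $s$ is $c\cdot 2^{s+1}$.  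The point is that squaring the bound makes the ratio $M/\epsilon = 2^{c'}$ (with $c'$ the current exponent) large enough that the choice $k = 2c'$ works uniformly.  Your polynomial-in-$n$ exponent decays too slowly to provide this slack.

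For the reverse direction, your route through ML-randomness and the Kjos-Hanssen/Greenberg--Miller extraction is correct in principle but indirect; the paper instead follows Flood's argument via a Jockusch-style tree tailored so that an infinite homogeneous set \emph{directly} encodes a $\dnrf$ function (as in the proof of Lemma~\ref{lem-nRWWKLimpliesnDNR}), which is both shorter and unambiguously formalizable in $\rca$.
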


\begin{proof}
The direction $\rwwkl \rightarrow \dnr$ is implicit in Flood's proof that $\rca \vdash \rwkl \imp \dnr$ (\cite{Flood2012}~Theorem~8).  Indeed, Flood's proof uses the construction of a tree of positive measure due to Jockusch~\cite{Jockusch1974}.  (For a similar construction proving a generalization of $\rwwkl \rightarrow \dnr$, see the proof of Lemma~\ref{lem-nRWWKLimpliesnDNR} below.)  The proof that $\dnr \imp \rwwkl$ is similar to the original proof of Theorem~\ref{thm:miller-dnc}.  However, some adjustments are needed as the original argument uses techniques from measure theory and algorithmic randomness which can only be formalized within $\wwkl$.  We instead use explicit combinatorial bounds.  

Assume $\dnr$, and consider a tree $T$ of measure $\geq 2^{-c}$ for some $c$, which we can assume to be $\geq 3$ (the reason for this assumption will become clear). For a given set~$H \subseteq \Nb$ and a value $v \in \{0,1\}$, let $\Gamma^v_H = \{\sigma \in \str : (\forall i \in H)(\sigma(i) = v)\}$, and abbreviate $\Gamma^v_{\{n\}}$ by $\Gamma^v_{n}$.  For a tree $T$ and a constant $c$, let $\bad(n,T,c)$ be the $\Sigma^0_1$ predicate `$\mu(T \cap \Gamma^0_n) <  2^{-2c}$.'  In the following claim, $\{n : \bad(n,T,c)\}$ need not \emph{a priori} exist as a set, so `$|\{n : \bad(n,T,c)\}| < 2c$' should be interpreted in the same manner as `$|W_e^X| < k$' in the statement of Lemma~\ref{lemma:dnr-rwwkl_helper}.

\begin{claim*}
If $c \geq 3$ and $\mu(T) \geq 2^{-c}$, then $|\{n : \bad(n,T,c)\}| < 2c$.
\end{claim*}

\begin{proof}
Suppose for a contradiction that $|\{n : \bad(n,T,c)\}| \geq 2c$, and let $B$ be the first $2c$ elements enumerated in $\{n : \bad(n,T,c)\}$.  For each $n \in B$, the tree $T \cap \Gamma^0_n$ has measure $<2^{-2c}$, which implies that $(\forall n \in B)(\exists i)(|T^i \cap \Gamma^0_n| < 2^{i-2c})$ (recall that $T^i$ is the set of strings in $T$ of length $i$).  By $\bsig^0_1$, let $N_0$ be such that $(\forall n \in B)(\exists i < N_0)(|T^i \cap \Gamma^0_n| < 2^{i-2c})$, and observe that $(\forall n \in B)(\forall j > N_0)(|T^j \cap \Gamma^0_n| < 2^{j-2c})$.  Let $N = N_0 + \max(B)$.

On the one hand,
\begin{align*}
\left|T^N \cap \bigcup_{n \in B} \Gamma^0_n\right| = |T^N \setminus \Gamma^1_B| \geq  |T^N| - |\Gamma^1_B \cap \{0,1\}^N| \geq  2^{N-c} - 2^{N-2c}.
\end{align*}

On the other hand,
\begin{align*}
\left|T^N \cap \bigcup_{n \in B} \Gamma^0_n\right| = \left| \bigcup_{n \in B} T^N \cap \Gamma^0_n\right| \leq \sum_{n \in B} |T^N \cap \Gamma^0_n| \leq 2c2^{N-2c}.
\end{align*}

Putting the two together, we get that $ 2^{N-c} - 2^{N-2c} \leq 2c2^{N-2c}$, which is a contradiction for $c \geq 3$. 
\end{proof}

Let $g$ be as in Lemma~\ref{lemma:dnr-rwwkl_helper} for $X=T$.  Given a (canonical index for a) finite set $F$ and a $c$, we can effectively produce an index $e(F,c)$ such that $\forall n(n \in W_{e(F,c)}^T \biimp \bad(n, T \cap \Gamma^0_F, c))$.  Recursively construct an increasing sequence $h_0 < h_1 < h_2 < \dots$ of numbers by letting, for each $s \in \Nb$, $H_s = \{h_i : i < s\}$ and $h_s = g(e(H_s, c2^s), c2^{s+1}, \max(H_s \cup \{0\}))$.  Using $\ipi^0_1$, we prove that $\forall s(\mu(T \cap \Gamma^0_{H_s}) \geq 2^{-c2^s})$.  For $s=0$, this is simply the assumption $\mu(T) \geq 2^{-c}$.  Assuming $\mu(T \cap \Gamma^0_{H_s}) \geq 2^{-c2^s}$, the claim implies that $|W_{e(H_s, c2^s)}^T| < c2^{s+1}$.  Thus $h_s = g(e(H_s, c2^s), c2^{s+1}, \max(H_s \cup \{0\})) \notin W_{e(H_s, c2^s)}$, and therefore $\neg \bad(h_s, T \cap \Gamma^0_{H_s}, c2^s)$.  This means that $\mu(T \cap \Gamma^0_{H_s} \cap \Gamma^0_{h_s}) \geq 2^{-c2^{s+1}}$, which is what we wanted because $\Gamma^0_{H_s} \cap \Gamma^0_{h_s} = \Gamma^0_{H_{s+1}}$.

Let $H = \{h_s : s \in \Nb\}$, which exists by $\Delta^0_1$ comprehension because the sequence $h_0 < h_1 < h_2 < \dots$ is increasing.  We show that $H$ is homogeneous for $T$.  Suppose for a contradiction that $H$ is not homogeneous for $T$.  This means that there are only finitely many $\sigma \in T$ such that $H$ is homogeneous for $\sigma$. Therefore at some level $s$, $\{\sigma \in T^s : (\forall i \in H)(\sigma(i) = 0)\}=\emptyset$.  As $H \cap \{0,1,\dots,s\} \subseteq H_s$, we in fact have that $\{\sigma \in T^s : (\forall i \in H_s)(\sigma(i) = 0)\}=\emptyset$.  In other words, $T \cap \Gamma^0_{H_s} = \emptyset$, which contradicts $\mu(T \cap \Gamma^0_{H_s}) \geq 2^{-c2^s}$.  Thus $H$ is homogeneous for $T$.
\end{proof}

Fix $n \in \omega$ with $n \geq 2$.  Just as with $\wwkls{n}$, it is possible to define $\rwwkls{n}$ to be the generalization of $\rwwkl$ to $X^{(n-1)}$-computable trees.  The equivalence between $\dnrs{n}$ and $\rwwkls{n}$ persists in the presence of sufficient induction.

\begin{definition}
For $n \in \omega$ with $n \geq 2$, $\rwwkls{n}$ is the statement ``for every $X$ and $e$, if $\Phi_e^{X^{(n-1)}}$ is the characteristic function of a subtree of $2^{<\Nb}$ with positive measure, then there is an infinite homogeneous set.''  (That is, there is an infinite $H \subseteq \Nb$ that is homogeneous for infinitely many $\sigma \in 2^{< \Nb}$ such that $\Phi_e^{X^{(n-1)}}(\sigma) = 1$.)
\end{definition}

\begin{lemma}\label{lem-nRWWKLimpliesnDNR}
For every $n \in \omega$ with $n \geq 1$, $\rca + \bsig^0_n \vdash \rwwkls{n} \imp \dnrs{n}$.
\end{lemma}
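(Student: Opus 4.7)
The plan is to adapt Jockusch's construction of a positive-measure $\Pi^0_1$ class of diagonally non-recursive functions (the same tree Flood uses for the $n=1$ case cited just before this lemma) to the relativized setting. I will exhibit an $X^{(n-1)}$-computable subtree $T \subseteq 2^{<\Nb}$ of measure at least $1/2$ such that any infinite set homogeneous for $T$, combined with $X^{(n-1)}$, computes a function diagonally non-recursive relative to $X^{(n-1)}$; applying $\rwwkls{n}$ to $T$ then produces such a homogeneous set and hence the desired $\dnrs{n}$ witness.

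For the construction of $T$, partition $\Nb$ into consecutive blocks $(I_e)_{e \in \Nb}$ with $|I_e| = e+2$, and for $\sigma$ with $I_e \subseteq \dom(\sigma)$ let $v_e(\sigma)$ denote the integer whose binary expansion is $\sigma \restriction I_e$. Declare $\sigma \in T$ exactly when, for every $e$ with $I_e \subseteq \dom(\sigma)$, the simulation of $\Phi_e^{X^{(n-1)}}(e)$ for at most $|\sigma|$ steps does not halt with value $v_e(\sigma) < 2^{|I_e|}$. The step bound is essential: it reduces testing membership in $T$ to a bounded conjunction of fixed-size queries of the form ``$\Phi_e^{X^{(n-1)}}(e)$ halts in $\le s$ steps with value $v$,'' each of which is formalizable through the $\Sigma^{0,X}_n$ abbreviations introduced in Section~\ref{sec:WWKL&MLR}. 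Consequently, one can exhibit an index $e_T$ such that $\Phi_{e_T}^{X^{(n-1)}}$ is the characteristic function of $T$, as demanded by the definition of $\rwwkls{n}$. A standard counting argument, formalizable in $\rca + \bsig^0_n$ to handle the $\Sigma^{0,X}_n$-description of the forbidden bit-patterns at each block, gives $\mu(T \cap 2^N) / 2^N \ge \prod_e(1 - 2^{-(e+2)}) \ge 1/2$ for every $N$.

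To extract a DNR function, let $H$ be an infinite set homogeneous for $T$ (produced by $\rwwkls{n}$) and let $c \in \{0,1\}$ be its homogeneity color, which is computable from $H$ by inspection of any witnessing $\sigma \in T$. In the spirit of the preceding theorem, I use the $s$-$m$-$n$ theorem together with padding to replace each index $e$ by a sufficiently large equivalent index $p(e)$ satisfying $\Phi_{p(e)}^{X^{(n-1)}}(p(e)) = \Phi_e^{X^{(n-1)}}(e)$, chosen so that the block $I_{p(e)}$ and its associated simulation bound lie past the halting step of this computation. Taking $\sigma_e$ to be the lex-least string in $T$ of length just past the end of $I_{p(e)}$ that is $c$-consistent on $H \cap \dom(\sigma_e)$, the construction of $T$ forces $v_{p(e)}(\sigma_e) \ne \Phi_{p(e)}^{X^{(n-1)}}(p(e))$, so setting $f(e) := v_{p(e)}(\sigma_e)$ yields a function with $f(e) \ne \Phi_e^{X^{(n-1)}}(e)$ for every $e$.

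The primary obstacle I anticipate is ensuring that $T$ is literally the extension of a set whose characteristic function is $X^{(n-1)}$-computable, rather than merely a $\Pi^0_1$ class relative to $X^{(n-1)}$, since $\rwwkls{n}$ specifically requires the former; this is what motivates the step-bounded formulation. A secondary subtlety is that this bounded tree only diagonalizes against ``quickly halting'' computations at each block, which is remedied by the padding trick in the style of the preceding theorem. The hypothesis $\bsig^0_n$ enters twice: to validate the positive-measure estimate (whose forbidden patterns are described by $\Sigma^{0,X}_n$ formulas) and to supply the comprehension needed to assemble $f$ as a set definable from $H$, $X$, and the arithmetic formulas describing $X^{(n-1)}$.
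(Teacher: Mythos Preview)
Your approach has a real gap for $n \geq 2$: the function $f$ you construct is not shown to exist as a set in the model. To compute $f(e) = v_{p(e)}(\sigma_e)$ you must locate the lex-least $\sigma_e$ of a prescribed length that lies in $T$ and is $c$-consistent with $H$. Deciding membership in $T$ is $X^{(n-1)}$-recursive, hence at best $\Delta^{0,X}_{n}$, so the graph of $f$ is $\Delta^{0,X\oplus H}_{n}$. But $\rca + \bsig^0_n$ supplies only \emph{bounded} $\Delta^0_n$ comprehension, not full $\Delta^0_n$ comprehension, so you cannot collect the values $f(e)$ into a single set. The same defect infects your padding step: choosing $p(e)$ ``past the halting step'' of $\Phi_e^{X^{(n-1)}}(e)$ presupposes knowing that halting time, which again requires $X^{(n-1)}$. (For $n=1$ none of this is an issue, since $T$ is then recursive in $X$; your argument is essentially Flood's in that case.)

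The paper avoids the problem by designing the tree so that the $\dnrf$ function is read off directly from the homogeneous set, with no further reference to $T$ or to $X^{(n-1)}$. Fixing bijections $b_k \colon \Nb \to [\Nb]^{k}$, the tree is defined so that $\sigma$ is excluded precisely when, for some $i < |\sigma|$ with $\Phi_{i,|\sigma|}^{X^{(n-1)}}(i)\!\downarrow$, the set $b_{i+3}\bigl(\Phi_{i,|\sigma|}^{X^{(n-1)}}(i)\bigr)$ is homogeneous for $\sigma$. Given an infinite homogeneous set $H$, one sets $f(i) = b_{i+3}^{-1}(H_{i+3})$, where $H_{i+3}$ consists of the first $i+3$ elements of $H$. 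This $f$ is recursive in $H$ alone and therefore exists by $\Delta^0_1$ comprehension; $\bsig^0_n$ is invoked only to verify, via bounded $\Delta^0_n$ comprehension, that each level $T^s$ exists as a finite set and has the size needed for $\mu(T) \geq 1/2$.
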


\begin{proof}
Fix a sequence of functions $(b_k)_{k \in \Nb}$ such that, for each $k \in \Nb$, $b_k$ is a bijection between $\Nb$ and $\Nb^{[k]}$.  Let $X$ be given.  Let $e$ be an index such that $\Phi_e^{X^{(n-1)}}(\sigma)=1$ if
\begin{align*}
(\forall i < |\sigma|)(\Phi_{i,|\sigma|}^{X^{(n-1)}}(i)\da \imp \text{$b_{i+3}(\Phi_{i,|\sigma|}^{X^{(n-1)}}(i))$ is not homogeneous for $\sigma$}),
\end{align*}
and $\Phi_e^{X^{(n-1)}}(\sigma)=0$ otherwise.  It is clear that $\Phi_e^{X^{(n-1)}}$ is the characteristic function of a tree.  We need to show that this tree has positive measure.  Fix $s \in \Nb$.  By bounded $\Delta^0_n$ comprehension, which is a consequence of $\bsig^0_n$ (see, for example, \cite{Hajek1998}~Lemma 2.19), $T^s = \{\sigma \in 2^s : \Phi_e^{X^{(n-1)}}(\sigma)=1\}$ exists as a finite set.  For each $i < s$, the proportion of strings in $2^s$ missing from $T^s$ on account of $\Phi_i^{X^{(n-1)}}$ is at most $2^{-i-2}$.  Therefore $|T^s|2^{-s} \geq \sum_{i < s}2^{-i-2} \geq 1/2$, so the tree indeed has positive measure.

By $\rwwkls{n}$, there is an infinite homogeneous set $H$ for the tree described by $\Phi_e^{X^{(n-1)}}$.  For each $i \in \Nb$, let $H_i$ denote the set consisting of the $i$ least elements of $H$.  Define $f \colon \Nb \imp \Nb$ by $f(i) = b_{i+3}^{-1}(H_{i+3})$.  We finish the proof by showing that $f$ is $\dnrf$ relative to $X^{(n-1)}$.  Suppose for a contradiction that there is an $i \in \Nb$ such that $f(i) = \Phi_i^{X^{(n-1)}}(i)$, and let $s$ be such that $\Phi_{i,s}^{X^{(n-1)}}(i)\da$.  By the definition of $f$, we have that $b_{i+3}^{-1}(H_{i+3}) = f(i) = \Phi_{i,s}^{X^{(n-1)}}(i)$.  By applying the bijection $b_{i+3}$, we have that $b_{i+3}(\Phi_{i,s}^{X^{(n-1)}}(i)) = H_{i+3}$ is homogeneous for the tree described by $\Phi_e^{X^{(n-1)}}$.  This is a contradiction because if $b_{i+3}(\Phi_{i,s}^{X^{(n-1)}}(i))$ is homogeneous for a $\sigma \in 2^{<\Nb}$ with $|\sigma| > s$, then $\Phi_e^{X^{(n-1)}}(\sigma) = 0$.
\end{proof}

\begin{lemma}\label{lem-nDNRimpliesnRWWKL}
For every $n \in \omega$ with $n \geq 1$, $\rca + \isig^0_n \vdash \dnrs{n} \imp \rwwkls{n}$.
\end{lemma}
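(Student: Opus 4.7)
The plan is to relativize the $\dnr \imp \rwwkl$ direction of Theorem~\ref{thm:dnr-rwwkl} to the $(n-1)$st jump, replacing the ambient uses of $\bsig^0_1$ and $\ipi^0_1$ by $\bsig^0_n$ and $\ipi^0_n$, both of which are consequences of $\isig^0_n$ over $\rca$. The case $n=1$ is already covered by Theorem~\ref{thm:dnr-rwwkl}, so the content lies in the jump relativization.

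First I would prove a relativized version of Lemma~\ref{lemma:dnr-rwwkl_helper} in $\rca + \dnrs{n}$: for every set $X$ there is a function $g \colon \Nb^3 \imp \Nb$ with $g(e,k,m) > m$ such that whenever the $\Sigma^{0,X}_n$ set $W_e^{X^{(n-1)}}$ (formalized as in Section~\ref{sec:WWKL&MLR}) has fewer than $k$ elements, $g(e,k,m)$ lies outside that set. The argument of Lemma~\ref{lemma:dnr-rwwkl_helper} goes through verbatim, now taking $f$ to be $\dnrf$ relative to $X^{(n-1)}$ (available by $\dnrs{n}$) and redefining the auxiliary indices $c(e,i,k)$ so that $\Phi_{c(e,i,k)}^{X^{(n-1)}}(x)$ reports the $(i+1)$st element ever enumerated into $W_e^{X^{(n-1)}}$. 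Given an $\rwwkls{n}$ instance, i.e.\ an index $e$ and a set $X$ such that $\Phi_e^{X^{(n-1)}}$ is the characteristic function of a subtree $T \subseteq 2^{<\Nb}$ of positive measure, I would fix $c \geq 3$ with $\mu(T) \geq 2^{-c}$ and rerun the construction from the proof of Theorem~\ref{thm:dnr-rwwkl}. Since membership in $T$ is $\Delta^{0,X}_n$, the predicate $\bad(m, T \cap \Gamma^0_F, c)$ is $\Sigma^{0,X}_n$; the combinatorial Claim that $|\{m : \bad(m,T,c)\}| < 2c$ still holds, with its one use of collection now requiring $\bsig^0_n$ to uniformize the level at which the bounds on $|T^i \cap \Gamma^0_m|$ are witnessed. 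The sequence $h_0 < h_1 < \cdots$ would then be defined by primitive recursion from $g$ and canonical indices $e(H_s, c2^s)$ naming the relevant $\Sigma^{0,X}_n$ bad sets, and $H = \{h_s : s \in \Nb\}$ exists by $\Delta^0_1$ comprehension because $(h_s)_{s \in \Nb}$ is strictly increasing.

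The one step demanding extra care is the invariant $\mu(T \cap \Gamma^0_{H_s}) \geq 2^{-c2^s}$: this is a $\Pi^{0,X}_n$ statement about $s$, so induction on $s$ needs $\ipi^0_n$, which is equivalent to $\isig^0_n$ over $\rca$. I expect the main obstacle to be precisely this complexity bookkeeping: one must confirm that no hidden quantifier alternation pushes the collection step of the Claim above $\bsig^0_n$ and that the measure-preservation invariant really is $\Pi^{0,X}_n$ rather than something more complicated, given that $T$ is only $\Delta^{0,X}_n$ rather than computable. Once those checks go through, the verification that $H$ is homogeneous for $T$ and the final contradiction from $T \cap \Gamma^0_{H_s} = \emptyset$ are textually identical to the original proof.
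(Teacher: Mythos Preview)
Your proposal is correct and follows essentially the same approach as the paper's own proof sketch: relativize Lemma~\ref{lemma:dnr-rwwkl_helper} to $X^{(n-1)}$ using $\dnrs{n}$, observe that $\bad$ becomes $\Sigma^0_n$ so the Claim needs $\bsig^0_n$, and use $\ipi^0_n$ (from $\isig^0_n$) for the inductive invariant $\mu(T \cap \Gamma^0_{H_s}) \geq 2^{-c2^s}$. Your additional remarks on the complexity bookkeeping are appropriate and in fact more explicit than the paper's sketch.
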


\begin{proof}[Proof sketch]
Follow the proof that $\rca \vdash \dnr \imp \rwwkl$ from Theorem~\ref{thm:dnr-rwwkl}, but interpret $T$ as an $X^{(n-1)}$-computable tree of positive measure in the sense of Section~\ref{sec:WWKL&MLR}.  The proof of Lemma~\ref{lemma:dnr-rwwkl_helper} goes through in $\rca$ when $X$ is replaced by $X^{(n-1)}$ and $\dnr$ is replaced by $\dnrs{n}$.  The predicate $\bad(n,T,c)$ is now $\Sigma^0_n$, and the proof of the claim goes through in $\rca + \bsig^0_n$.  The function $g$ exists by the generalization of Lemma~\ref{lemma:dnr-rwwkl_helper}, and the function $e$ is the same as it was before.  The set $H$ is constructed from $g$ and $e$ as it was before.  Use $\ipi^0_n$, a consequence of $\rca + \isig^0_n$, to prove the analog of $\forall s(\mu(T \cap \Gamma^0_{H_s}) \geq 2^{-c2^s})$.  The rest of the proof is the same as it was before.
\end{proof}

\begin{theorem}
For every $n \in \omega$ with $n \geq 1$, $\rca + \isig^0_n \vdash \dnrs{n} \biimp \rwwkls{n}$.
\end{theorem}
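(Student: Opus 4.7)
The plan is to assemble the equivalence from the two immediately preceding lemmas. The forward direction $\dnrs{n} \imp \rwwkls{n}$ is already Lemma~\ref{lem-nDNRimpliesnRWWKL} verbatim, which is proved in $\rca + \isig^0_n$, so nothing further is needed there.

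For the reverse direction $\rwwkls{n} \imp \dnrs{n}$, I would cite Lemma~\ref{lem-nRWWKLimpliesnDNR}, which establishes this implication over $\rca + \bsig^0_n$. To reconcile the hypothesis, I would invoke the standard fact that $\rca + \isig^0_n \vdash \bsig^0_n$ (see, e.g., \cite{Hajek1998}), so any proof carried out in $\rca + \bsig^0_n$ automatically goes through in $\rca + \isig^0_n$. This lets me combine the two lemmas inside the single base theory $\rca + \isig^0_n$.

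There is essentially no obstacle here, since both directions have been done in the two lemmas just above; the only point worth mentioning is the mismatch between $\bsig^0_n$ (used in Lemma~\ref{lem-nRWWKLimpliesnDNR}) and $\isig^0_n$ (used in Lemma~\ref{lem-nDNRimpliesnRWWKL}), resolved by the $\isig^0_n \imp \bsig^0_n$ implication. The proof would therefore consist of two sentences: ``The implication $\dnrs{n} \imp \rwwkls{n}$ is Lemma~\ref{lem-nDNRimpliesnRWWKL}. The converse follows from Lemma~\ref{lem-nRWWKLimpliesnDNR} together with the fact that $\isig^0_n$ implies $\bsig^0_n$ over $\rca$.''
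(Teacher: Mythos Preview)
Your proposal is correct and matches the paper's approach exactly: the paper's proof is the single sentence ``The theorem follows from Lemma~\ref{lem-nRWWKLimpliesnDNR} and Lemma~\ref{lem-nDNRimpliesnRWWKL}.'' Your version is in fact slightly more explicit, since you spell out the $\isig^0_n \Rightarrow \bsig^0_n$ step that the paper leaves tacit.
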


\begin{proof}
The theorem follows from Lemma~\ref{lem-nRWWKLimpliesnDNR} and Lemma~\ref{lem-nDNRimpliesnRWWKL}.
\end{proof}

We leave open the question of the exact amount of induction required to prove Lemma~\ref{lem-nRWWKLimpliesnDNR} and Lemma~\ref{lem-nDNRimpliesnRWWKL}.  It would be particularly interesting to determine whether or not $\rwwkls{n}$ implies $\bsig^0_2$.

\begin{question}\label{qu-2RWWKLvsBSig2}
Does $\rca + \rwwkls{2} \vdash \bsig^0_2$?
\end{question}

In~\cite{Flood2012}, Flood also studies what he calls $\rkl^{(1)}$, which is $\rwkl$ for $\Sigma^0_1$-definable infinite subtrees of $2^{<\Nb}$.  He notes that $\rkl^{(1)}$ is equivalent to $\rwkl$ for $\Pi^0_2$ trees, and thus it follows that $\rkl^{(1)}$ is equivalent to $\rwkl$ for $\Delta^0_2$ trees, a statement which we would call $\rwkls{2}$ in the foregoing notation.  Flood presents Yokoyama's proof that $\rca \vdash \rwkls{2} \imp \srt^2_2$, and Flood asks (\cite{Flood2012}~Question 22) if the reverse implication holds.  We show that it does not.

\begin{theorem}
$\rca + \srt^2_2 \nvdash \rwkls{2}$.
\end{theorem}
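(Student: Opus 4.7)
The strategy is to identify a principle $P$ implied by $\rwkls{2}$ but not by $\srt^2_2$, and I would take $P := \dnrs{2}$. The first half is almost immediate: every $\Delta^0_2$ subtree of $2^{<\Nb}$ of positive measure is in particular a $\Delta^0_2$ subtree of $2^{<\Nb}$, so $\rca \vdash \rwkls{2} \imp \rwwkls{2}$, and Lemma~\ref{lem-nRWWKLimpliesnDNR} (with $n = 2$) then yields $\rca + \bsig^0_2 \vdash \rwwkls{2} \imp \dnrs{2}$. Since $\bsig^0_2$ holds automatically in every $\omega$-model, it suffices to exhibit an $\omega$-model of $\rca + \srt^2_2$ in which $\dnrs{2}$ fails.

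To construct such a model, I would build an increasing Turing sequence $\emptyset = X_0 \leq_T X_1 \leq_T \cdots$ of sets, none of which computes a $\dnrf(\emptyset')$ function, while arranging that every stable 2-coloring recursive in some $X_n$ acquires an infinite homogeneous set among the later members. The key ingredient is a preservation theorem of the following form: if $X$ does not compute any $\dnrf(\emptyset')$ function and $f \leq_T X$ is a stable coloring of pairs, then there is an infinite $f$-homogeneous set $H$ such that $X \oplus H$ still does not compute a $\dnrf(\emptyset')$ function. This is an instance of the kind of Mathias-style forcing preservation argument routinely applied to $\srt^2_2$, in the spirit of Liu~\cite{Liu} and Patey~\cite{Patey2015strength, Patey2015weakness}, who use such preservations to separate $\srt^2_2$ and $\rt^2_2$ from various compactness-type principles. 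Once such a preservation is in hand, iterate with standard bookkeeping over pairs $(n, e)$ to produce a second-order part $\mathcal{S} = \bigcup_n \{Y : Y \leq_T X_n\}$ forming an $\omega$-model $\mathcal{M} = (\omega, \mathcal{S})$ of $\rca + \srt^2_2$, every element of which fails to compute a $\dnrf(\emptyset')$ function.

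In $\mathcal{M}$, the instance $X = \emptyset$ of $\dnrs{2}$ demands an $f \in \mathcal{S}$ with $f(e) \neq \Phi_e^{\emptyset'}(e)$ for every $e$; by construction no such $f$ lies in $\mathcal{S}$, so $\dnrs{2}$ fails in $\mathcal{M}$. Combining with the first paragraph, $\rwkls{2}$ also fails in $\mathcal{M}$, which gives $\rca + \srt^2_2 \nvdash \rwkls{2}$.

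The main obstacle is establishing the preservation theorem for $\srt^2_2$ and the non-$\dnrs{2}$ property. I expect it to proceed via a two-step argument: first a cohesiveness-type forcing that reduces the stable case to the two colour-classes of the limit set, and then, for each fixed colour, a combinatorial forcing on infinite subsets of the $\Delta^0_2$ limit set that preserves the absence of a $\dnrf(\emptyset')$ reduction. The remainder is routine $\omega$-model bookkeeping.
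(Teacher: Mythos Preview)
Your reduction of the problem to separating $\srt^2_2$ from $\dnrs{2}$ is exactly what the paper does: over $\rca + \bsig^0_2$, $\rwkls{2} \imp \rwwkls{2} \imp \dnrs{2}$ by Lemma~\ref{lem-nRWWKLimpliesnDNR}. Where you diverge is in how you obtain a model of $\srt^2_2$ in which $\dnrs{2}$ fails.

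The paper does not build anything. It simply invokes the Chong--Slaman--Yang result~\cite{chongmetamathematics}: there is a (non-standard) model of $\rca + \bsig^0_2 + \srt^2_2$ in which every set is low. Every set in such a model is computable from $0'$, so none can be $\dnrf(0')$, and $\dnrs{2}$ fails for the instance $X = \emptyset$. That is the entire argument---one citation.

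Your route instead asks for an $\omega$-model, which requires the preservation theorem you state: if $X$ computes no $\dnrf(\emptyset')$ function and $f \leq_T X$ is stable, then some infinite $f$-homogeneous $H$ has $X \oplus H$ still computing no $\dnrf(\emptyset')$ function. You correctly identify this as ``the main obstacle,'' but you should recognise that it is essentially the whole proof, and it is not the routine exercise you suggest. The Liu and Patey preservations you cite avoid PA degree or avoid computing a random; avoiding computation of a $\dnrf(\emptyset')$ function is a different (non-compact, $\Pi^{0,\emptyset'}_1$-flavoured) property, and the Mathias forcing over a merely $\Delta^0_2$ reservoir needed here is delicate---indeed, the difficulty of such $\omega$-model preservations for $\srt^2_2$ is precisely why Chong--Slaman--Yang resorted to non-standard methods in the first place. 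Your approach would, if completed, yield the stronger conclusion of an $\omega$-model separation, but as written it replaces a one-line citation with an unproven lemma that is substantially harder than the theorem you are trying to establish.
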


\begin{proof}
Over $\rca + \bst$, $\rwkls{2}$ implies $\rwwkls{2}$ and, by Lemma~\ref{lem-nRWWKLimpliesnDNR}, $\rwwkls{2}$ in turn implies $\dnrs{2}$.  However, $\srt^2_2$ does not imply $\dnrs{2}$ over $\rca + \bst$ because there are models of $\rca + \bst + \srt^2_2$ in which every set is low~\cite{chongmetamathematics}.  In particular, every set in such a structure is computable from $0'$, so such a structure is not a model of $\dnrs{2}$.
\end{proof}

\subsection{Changing homogeneity constraints}\label{sect:changing-homogeneity-constraints}
Notice that the homogeneous set constructed in the proof of Theorem~\ref{thm:dnr-rwwkl} is always homogeneous for color $0$, and we could just as easily constructed a set homogeneous for color $1$.  Thus no additional power is gleaned from $\rwwkl$ by prescribing the color of the homogeneous set ahead of time.

\begin{corollary}[to the proof of Theorem~\ref{thm:dnr-rwwkl}]\label{cor:fixed-rwwkl-rwwkl}
The following statements are equivalent over $\rca$:
\begin{itemize}
  \item[(i)] $\dnr$
  \item[(ii)] $\rwwkl$
  \item[(iii)] For every tree $T \subseteq \str$ of positive measure, there is an infinite set that is homogeneous for $T$ with color $0$.
\end{itemize}
\end{corollary}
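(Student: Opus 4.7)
The plan is to exploit the fact that the implication $(i) \Rightarrow (ii)$ in Theorem~\ref{thm:dnr-rwwkl} already produces a color-$0$-homogeneous set. Since $(i) \Leftrightarrow (ii)$ is Theorem~\ref{thm:dnr-rwwkl}, I only need to establish the equivalence of $(iii)$ with the other two items.

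First, observe that $(iii) \Rightarrow (ii)$ is immediate: any infinite set that is homogeneous for $T$ with color $0$ is, in particular, an infinite homogeneous set for $T$, so $(iii)$ is a syntactic strengthening of $(ii)$.

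For $(i) \Rightarrow (iii)$, I would revisit the construction in the proof of Theorem~\ref{thm:dnr-rwwkl}. There, given $\dnr$ and a tree $T$ of measure $\geq 2^{-c}$ (with $c \geq 3$), one recursively defines $H = \{h_s : s \in \Nb\}$ via $h_s = g(e(H_s, c2^s), c2^{s+1}, \max(H_s \cup \{0\}))$ and proves, by $\ipi^0_1$, that $\mu(T \cap \Gamma^0_{H_s}) \geq 2^{-c2^s}$ for every $s$. In particular, $T \cap \Gamma^0_{H_s}$ is nonempty for every $s$, which means the tree $\{\sigma \in T : (\forall i \in H)(i < |\sigma| \imp \sigma(i) = 0)\}$ is infinite; that is, $H$ is homogeneous for $T$ \emph{with color $0$}. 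Hence the construction already yields $(iii)$, and no modification of the argument is needed.

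Finally, $(ii) \Rightarrow (i)$ is the direction $\rwwkl \imp \dnr$ already covered in Theorem~\ref{thm:dnr-rwwkl}. Chaining the three implications gives the claimed equivalence. No serious obstacle is expected here; the corollary is essentially a bookkeeping observation about the proof of Theorem~\ref{thm:dnr-rwwkl}, recording the fact that the homogeneity color was never a free parameter in that construction.
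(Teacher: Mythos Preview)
Your proposal is correct and matches the paper's approach exactly: the paper records precisely the observation that the set $H$ built in the proof of Theorem~\ref{thm:dnr-rwwkl} is already homogeneous with color~$0$, so $(i)\Rightarrow(iii)$ comes for free, while $(iii)\Rightarrow(ii)$ is trivial and $(ii)\Rightarrow(i)$ is part of Theorem~\ref{thm:dnr-rwwkl}.
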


One then wonders if any additional strength is gained by modifying $\rwkl$ to require that homogeneous sets be homogeneous for color $0$.  Of course an infinite homogeneous set for color $0$ need not exist in general, so we restrict to trees that do not have paths that are eventually $1$.  For the purposes of the next theorem, ``$T$ has no path that is eventually $1$'' means $\forall \sigma \exists n (\sigma^\smf 1^n \notin T)$.

\begin{theorem}\label{thm:fixed-rwkl-wkl}
The following statements are equivalent over $\rca$:
\begin{itemize}
  \item[(i)] $\wkl$
  \item[(ii)] For every infinite tree $T \subseteq \str$ with no path that is eventually $1$, there is an infinite set homogeneous for $T$ with color $0$.
\end{itemize}
\end{theorem}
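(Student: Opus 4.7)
The direction (i) $\imp$ (ii) is straightforward: given an infinite $T \subseteq \str$ with no path that is eventually $1$, apply $\wkl$ to obtain $P \in [T]$. By the hypothesis on $T$, the path $P$ is not eventually $1$, so the set $H = \{n : P(n)=0\}$ is infinite; each initial segment $P \uh n$ lies in $T$ and is $0$ at every $i \in H \cap [0,n)$, so $H$ is homogeneous for $T$ with color $0$.

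For the direction (ii) $\imp$ (i), given an arbitrary infinite tree $T \subseteq \str$, the plan is to construct an auxiliary tree $T^* \subseteq \str$ with no eventually-$1$ path together with a uniform procedure that turns any infinite color-$0$ homogeneous set for $T^*$ into a path through $T$. A natural first candidate is the pairing tree: for each $\sigma \in T$ of length $n$, place into $T^*$ the string $\tau_\sigma \in 2^{2n}$ defined by $\tau_\sigma(2i) = \sigma(i)$ and $\tau_\sigma(2i+1) = 1-\sigma(i)$ for $i < n$, along with all prefixes, so that $T^*$ is the resulting downward-closed set. The relation $\tau(2i) + \tau(2i+1) = 1$ valid throughout $T^*$ forbids eventually-$1$ paths, and any color-$0$ homogeneous set $H$ cannot contain both $2k$ and $2k+1$, so each $k$ with $H \cap \{2k, 2k+1\} \neq \emptyset$ is assigned an unambiguous bit $b(k) \in \{0,1\}$ (namely $b(k) = 0$ if $2k \in H$ and $b(k) = 1$ if $2k+1 \in H$) that must equal $\sigma(k)$ for each of the infinitely many $\sigma \in T$ witnessing homogeneity.

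The principal obstacle is that $b$ is only defined on $S := \{k : H \cap \{2k,2k+1\} \neq \emptyset\}$, which is infinite but need not be cofinite, so the subtree of $T$ compatible with $b$ on $S$ is still an infinite subtree from which extracting a path prima facie requires $\wkl$. To overcome this, the auxiliary tree must be strengthened so that a single infinite color-$0$ homogeneous set carries information at every level of $T$. One natural refinement inflates each bit of $\sigma$ into a growing block encoded so that (a) each block contains both $0$'s and $1$'s, preserving the no-eventually-$1$-path hypothesis, (b) any color-$0$ homogeneous set meeting a block forces a specific value of $\sigma$ at that bit, and (c) the global structure of $T^*$ forces any infinite color-$0$ homogeneous set to meet every block past some finite level. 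Arranging (b) and (c) simultaneously is the combinatorial heart of the argument; once achieved, $S$ is cofinite and a path $X \in [T]$ is assembled from $b$ and a bounded search over the finitely many remaining bits.
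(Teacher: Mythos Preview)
Your argument for (i) $\Rightarrow$ (ii) is correct.

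For (ii) $\Rightarrow$ (i), however, you have not given a proof: you set up the pairing tree, correctly diagnose why it fails, and then promise a refinement satisfying conditions (a)--(c) whose construction you call ``the combinatorial heart of the argument'' --- but you never carry it out. Worse, condition (c) cannot be achieved by the kind of block encoding you describe. In any such encoding, each block $B_k$ contains both $0$'s and $1$'s along every $\tau_\sigma$, so along any fixed path of $T^*$ the $0$-positions already form an infinite set that may be sampled arbitrarily sparsely. Given any assignment of blocks, one can choose an infinite $H$ contained in the $0$-positions of a single $\tau_\sigma$ that skips infinitely many blocks entirely; such an $H$ is homogeneous for color $0$ but violates (c). Nothing in the structure of $T^*$ can force an \emph{arbitrary} infinite subset of the $0$-positions of a path to meet every block past some level --- that would amount to a packing condition on $H$, which is precisely what (ii) does not give you.

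The paper's argument avoids this obstacle by abandoning bit-by-bit recovery entirely. Fix the length-lexicographic enumeration $(\tau_i)_{i \in \Nb}$ of $2^{<\Nb}$ and, given the target tree $S$, set
\[
T = \{\sigma \in 2^{<\Nb} : (\exists \tau \in S^{|\sigma|})(\forall i < |\sigma|)(\sigma(i)=0 \leftrightarrow \tau_i \preceq \tau)\}.
\]
A string in $T$ records, for a single witness $\tau \in S$, exactly which $\tau_i$ are prefixes of $\tau$. Since every sufficiently long $\tau$ has a prefix $\tau_i$ with $i$ in any window of indices covering all strings of a given length, $T$ has no eventually-$1$ path. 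If $H$ is \emph{any} infinite set homogeneous for $T$ with color $0$, then for $i \leq j$ in $H$ a witness $\sigma \in T$ of length $j+1$ (with witness $\tau \in S$) shows $\tau_i \preceq \tau$ and $\tau_j \preceq \tau$, hence $\tau_i \preceq \tau_j$. Thus $\{\tau_i : i \in H\}$ is a $\preceq$-chain in $S$ of unbounded length, and $\bigcup_{i \in H}\tau_i$ is a path through $S$. The key idea you are missing is to make the $0$'s encode \emph{prefixes of a path} rather than \emph{individual bits}, so that any infinite collection of them --- however sparse --- already determines the full path.
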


\begin{proof}
Clearly $(i) \imp (ii)$.  For $(ii) \imp (i)$, let $S \subseteq 2^{<\Nb}$ be an infinite tree.  We define a tree $T \subseteq \str$ whose paths have $0$'s only at positions corresponding to codes of initial segments of paths through $S$.  Let $(\tau_i)_{i \in \Nb}$ be the enumeration of $\str$ in length-lexicographic order, and note that $\forall i(|\tau_i| \leq i)$.  Let
\begin{align*}
T = \{\sigma \in \str : (\exists \tau \in S^{|\sigma|})(\forall i < |\sigma|)(\sigma(i) = 0 \biimp \tau_i \preceq \tau)\}.
\end{align*}

$T$ is a tree because if $\sigma \in T$ is witnessed by $\tau \in S$ and $n < |\sigma|$, then $\tau \restriction n \in S$ witnesses that $\sigma \restriction n \in T$.  Every string of length $n$ in $S$ witnesses the existence of a string of length $n$ in $T$, so $T$ is infinite because $S$ is infinite.

We show that $T$ has no path that is eventually $1$.  Consider a $\sigma \in \str$.  Choose $m$ and $n$ such that $\forall i(|\tau_i| = m \imp |\sigma| < i < |\sigma|+n)$.  Suppose for a contradiction that $\tau \in S$ witnesses that $\sigma^\smf 1^n \in T$.  If $i$ is such that $\tau_i = \tau \restriction m$, then $|\sigma| < i < |\sigma|+n$.  So, because $\tau$ witnesses that $\sigma^\smf 1^n \in T$, we have the contradiction $(\sigma^\smf 1^n)(i) = 0$.  Thus $T$ has no path that is eventually $1$.

By $(ii)$, let $H$ be infinite and homogeneous for $T$ with color $0$.  If $i$ and $j$ are in $H$ with $i \leq j$, then $\tau_i$ and $\tau_j$ are in $S$ with $\tau_i \preceq \tau_j$.  This can be seen by considering a $\sigma \in T$ of length $j+1$ for which $H$ is homogeneous with color $0$ and a $\tau \in S$ witnessing that $\sigma \in T$.  Thus we can define an $f \in 2^\Nb$ by $f = \bigcup_{i \in H}\tau_i$, and this $f$ is a path through $S$ because $\tau_i \in S$ for every $i \in H$.
\end{proof}

We now study a variant of $\rwkl$ where the homogeneous sets are required to not be too sparse, namely, everywhere-packed homogeneous sets.  This notion is not to be confused with the notion of a packed homogeneous set introduced by Flood~\cite{FloodPacked}.  Flood studies the computability-theoretic content of Erd\H{o}s and Galvin's~\cite{ErdosGalvin} \emph{packed} variants of Ramsey's theorem.  These theorems weaken homogeneity to a property called \emph{semi-homogeneity}, but they require that these semi-homogeneous sets satisfy a certain density requirement.  Flood shows that the packed variants of Ramsey's theorem behave similarly to Ramsey's theorem.  We formulate an everywhere-packed variant of $\rwkl$ and prove that it is equivalent to $\wkl$.  For this formulation, we consider an alternate definition of homogeneity.

\begin{definition}\label{def:hom-gen-tree}
A partial function $h \colon \!\!\subseteq \Nb \imp \Nb$ is \emph{homogeneous} for $\sigma \in \Nb^{<\Nb}$ if $(\forall n \in \dom(h))(n < |\sigma| \imp \sigma(n) = h(n))$.  If $T$ is an infinite, finitely branching tree, a partial function $h \colon \!\!\subseteq \Nb \imp \Nb$ is \emph{homogeneous} for $T$ if the tree $\{\sigma \in T : \text{$h$ is homogeneous for $\sigma$}\}$ is infinite.
\end{definition}

In Definition~\ref{def:hom-gen-tree}, we always assume that $\dom(h)$ exists as a set.  This is no real restriction because in $\rca$ one can prove that every infinite $\Sigma^0_1$-definable set has an infinite subset that actually exists as a set.  Thus if $h$ is infinite, we may always restrict $h$ to an infinite subset of $\dom(h)$ that exists as a set.  

If $h$ is infinite and homogenous for an infinite tree $T \subseteq \str$, then both of the sets $h^{-1}(0)$ and $h^{-1}(1)$ are homogeneous for $T$, and one of them must be infinite.  Conversely, if $H$ is homogeneous for an infinite tree $T \subseteq \str$ with color $c$, then the function $h \colon H \imp 2$ with constant value $c$ is homogeneous for $T$.  Thus, over $\rca$, it is equivalent to define $\rwkl$ in terms of set-homogeneity or in terms of function-homogeneity.  However, function-homogeneity lets us impose the density constraints we need for our everywhere-packed variant of $\rwkl$.  Function-homogeneity also lets us formulate Ramsey-type variants of full K\"onig's lemma and of bounded K\"onig's lemma.

Recall that an \emph{order function} is a non-decreasing unbounded function $g \colon \Nb \imp \Nb$.

\begin{definition}
Let $g$ be an order function.  A partial function $h \colon \!\!\subseteq \Nb \imp \Nb$ is \emph{everywhere-packed} for $g$ if $\forall n(|\dom(h) \restriction n| \geq g(n))$.
\end{definition}

Our everywhere-packed variant of $\rwkl$ is equivalent to $\wkl$ by an argument that replaces a tree with a version of that tree having sufficient redundancy.

\begin{theorem}\label{thm:packed-rwkl-wkl}
$\rca$ proves that, for every order function $g$ satisfying $\forall n(g(n) \leq n)$, the following statements are equivalent:
\begin{itemize}
  \item[(i)] $\wkl$
  \item[(ii)] For every infinite tree $T \subseteq \str$, there is an infinite $h$ that is homogeneous for $T$ and everywhere-packed for $g$.
\end{itemize}
\end{theorem}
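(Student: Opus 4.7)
The implication $(i) \imp (ii)$ is immediate: given $\wkl$ and an infinite tree $T \subseteq \str$, choose any path $f \in [T]$ and set $h := f$; then $\dom(h) = \Nb$ gives $|\dom(h) \restriction n| = n \geq g(n)$, and $h$ is homogeneous for every initial segment of $f$, of which there are infinitely many in $T$.

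For $(ii) \imp (i)$, my plan is to embed an arbitrary infinite subtree $S \subseteq \str$ into a ``padded'' tree $T \subseteq \str$ partitioned into blocks chosen finely enough relative to $g$ that every $h$ everywhere-packed for $g$ is forced to meet every block. Using that $g$ is unbounded and $g(n) \leq n$, define block boundaries recursively by $m(0) := 0$ and $m(i+1) := \min\{n : g(n) > m(i)\}$; the constraint $g(m(i)) \leq m(i)$ guarantees $m(i+1) > m(i)$. For any $h$ everywhere-packed for $g$,
\begin{align*}
|\dom(h) \restriction m(i+1)| \geq g(m(i+1)) > m(i) \geq |\dom(h) \restriction m(i)|,
\end{align*}
so $\dom(h) \cap [m(i), m(i+1)) \neq \emptyset$ for every $i$.

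Next, define $T$ to consist of those $\sigma \in \str$ that are constant on each block-intersection $[m(i), m(i+1)) \cap [0, |\sigma|)$ and whose block-value sequence $\langle \sigma(m(i)) : m(i) < |\sigma|\rangle$ lies in $S$. This is a $\Delta^0_1$-definable infinite subtree of $\str$: each $\tau \in S$ of length $k+1$ lifts to some $\sigma \in T$ of length $m(k+1)$, and prefixes of strings in $T$ remain in $T$ because $S$ is itself a tree. Apply (ii) to $T$ to obtain an infinite $h$ homogeneous for $T$ and everywhere-packed for $g$, and define $\tau \colon \Nb \imp 2$ by $\tau(i) := h(\min(\dom(h) \cap [m(i), m(i+1))))$. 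For each $k$, pick a witness $\sigma \in T$ of length $\geq m(k+1)$ for which $h$ is homogeneous; block-constancy of $\sigma$ together with $h(j) = \sigma(j)$ on $\dom(h) \cap [0, |\sigma|)$ forces $\tau(i) = \sigma(m(i))$ for every $i \leq k$, so $\tau \restriction (k+1)$ equals the block-value sequence of $\sigma$ and thus lies in $S$. Hence $\tau$ is a path through $S$.

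The main subtle point I expect is confirming that $\tau$ is unambiguously defined, that is, that the value $h(j)$ for $j \in \dom(h) \cap [m(i), m(i+1))$ is independent of both the chosen block-representative $j$ and of which homogeneity-witnessing $\sigma$ is used. This is exactly what the block-constancy clause in the definition of $T$ is designed to supply: any sufficiently long $\sigma \in T$ witnessing homogeneity makes every point of $\dom(h) \cap [m(i), m(i+1))$ carry the single value $\sigma(m(i))$, and any two such $\sigma$'s necessarily agree there because both agree with $h$ on that intersection.
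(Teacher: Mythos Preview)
Your proof is correct and follows essentially the same approach as the paper's own proof. Your block boundaries $m(i)$ coincide exactly with the paper's sequence $u_i$ (defined by $u_0=0$, $u_{i+1}=\mu n(g(n)\geq u_i+1)$), your padded tree is the paper's tree $S$, and your extraction of the path by reading off $h$ on one representative per block is precisely the paper's definition $f(j)=h(i_j)$; the only difference is notation (you swap the roles of the names $S$ and $T$).
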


\begin{proof}
Fix an order function $g$ bounded by the identity.

The direction $(i) \imp (ii)$ is trivial.  If $f$ is a path through $T$, then $f$ is also homogeneous for $T$ and everywhere-packed for $g$.

Consider the direction $(ii) \imp (i)$, and let $T$ be an infinite subtree of $\str$.  Define a sequence $(u_n)_{n \in \Nb}$ by $u_0 = 0$ and $u_{n+1} = \mu i(g(i) \geq u_n + 1)$.  Let
\begin{align*}
S = \{\sigma \in \str : (\exists \tau \in T)[|\tau| = \mu i(|\sigma| < u_i) \andd (\forall i < |\sigma|)(\forall j < |\tau|)(i \in [u_j, u_{j+1}) \imp \sigma(i)=\tau(j))]\}.
\end{align*}

The idea behind $S$ is to ensure enough redundancy so that the domain of every infinite function that is homogeneous for $S$ and everywhere-packed for $g$ intersects each interval $[u_i, u_{i+1})$.  For example, if $g(n) = \lfloor \frac{n}{2} \rfloor$, then $u_1 = 2$, $u_2 = 6$, $u_3 = 14$, $u_4 = 30$, and the string $10101$ in $T$ corresponds in $S$ to
\begin{align*}
\overbrace{11}^{u_1 - u_0}\overbrace{0000}^{u_2 -
u_1}\overbrace{11111111}^{u_3 - u_2}\overbrace{0000000000000000}^{u_4 -
u_3}\overbrace{11111111111111111111111111111111}^{u_4-u_3}.
\end{align*}
It is easy to see that if $T$ is infinite, then so is $S$.  To see that $S$ is a tree, consider a $\sigma \in S$, and let $\tau \in T$ witness $\sigma$'s membership in $S$.  Given an $n \leq |\sigma|$, let $i = \mu i(n < u_i)$ and verify that $\tau \restriction i$ witnesses that $\sigma \restriction n$ is in $S$.  Let $h$ be an infinite function that is homogeneous for $S$ and everywhere-packed for $g$.

First we show that $(\forall j)(\dom(h) \cap [u_j, u_{j+1}) \neq \emptyset)$.  To see this, observe that $|\dom(h)\restriction u_{j+1}| \geq g(u_{j+1})$ because $h$ is everywhere-packed for $g$.  By definition, $g(u_{j+1}) \geq u_j + 1$.  Thus, by the finite pigeonhole principle, there must be an $i$ in $\dom(h)\restriction u_{j+1}$ with $i \geq u_j$.

Now, for each $j$, let $i_j$ be the least element of $\dom(h) \cap [u_j, u_{j+1})$.  Define a function $f$ by $f(j) = h(i_j)$.  This $f$ is a path through $T$.  To see this, fix $n$ and let $\sigma \in S^{u_n}$ be such that $h$ is homogeneous for $\sigma$.  Let $\tau \in T$ witness that $\sigma \in S$, and note that $|\tau| = n+1$.  For each $j < n$, we have that $\sigma(i_j) = \tau(j)$ by the choice of $i_j$ and the definition of $S$, and we also have that $\sigma(i_j) = f(j)$ by the choice of $\sigma$ and the definition of $f$.  Thus $f \restriction n = \tau \restriction n$, so $f \restriction n \in T$ as desired.
\end{proof}

\subsection{Ramsey-type K\"onig's lemma for arbitrary finitely branching trees}
Using the functional notion of homogeneity, we easily generalize $\rwkl$ to infinite, bounded trees and to infinite, finitely branching trees.  It is well known that K\"onig's lemma ($\kl$) is equivalent to $\aca$ (see \cite{Simpson2009}~Theorem~III.7.2) and that \emph{bounded K\"onig's lemma} (i.e., K\"onig's lemma for infinite bounded subtrees of $\Nb^{<\Nb}$) is equivalent to $\wkl$ (see \cite{Simpson2009}~Lemma~IV.1.4).  Interestingly, we find that the Ramsey-type variant of K\"onig's lemma is equivalent to $\aca$ and that the Ramsey-type variant of bounded K\"onig's lemma is equivalent to $\wkl$, not $\rwkl$.

\begin{definition}
$\rkl$ is the statement ``for every infinite, finitely branching subtree of $\Nb^{<\Nb}$ , there is an infinite homogeneous partial function.''\footnote{`$\rkl$' was Flood's original name for $\rwkl$.  We prefer to use `$\rkl$' for the Ramsey-type variant of K\"onig's lemma and `$\rwkl$' for the Ramsey-type variant of weak K\"onig's lemma.  See Remark~\ref{rem:rkl-renamed}.}
\end{definition}

\begin{theorem}\label{thm-RKLisACA}
$\rca \vdash \aca \biimp \rkl$.
\end{theorem}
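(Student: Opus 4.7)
The easy direction $\aca \imp \rkl$ proceeds as follows. Working in $\rca + \aca$, recall that $\aca$ proves full K\"onig's lemma $\kl$ (Simpson~\cite{Simpson2009}~Theorem~III.7.2). Given an infinite, finitely branching tree $T \subseteq \Nb^{<\Nb}$, apply $\kl$ to obtain an infinite path $p \colon \Nb \imp \Nb$ through $T$. Viewing $p$ as a total partial function, it is automatically homogeneous for $T$ in the sense of Definition~\ref{def:hom-gen-tree}: every initial segment $p \restriction n$ lies in $T$, and $p$ agrees with $p \restriction n$ on all of $\dom(p) \cap \{0,\dots,n-1\}$, so the set $\{\sigma \in T : \text{$p$ is homogeneous for $\sigma$}\}$ contains every $p \restriction n$ and is therefore infinite.

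For the harder direction $\rkl \imp \aca$, I work in $\rca + \rkl$ and show that the range of any injective $f \colon \Nb \imp \Nb$ exists as a set; this is equivalent to $\aca$ over $\rca$. Given such an $f$, the plan is to construct a computable-in-$f$ infinite, finitely-branching tree $T \subseteq \Nb^{<\Nb}$ whose nodes at level $n$ encode pairs $(A_n, s_n)$, where $s_n \in \Nb$ is a ``witness stage'' and $A_n = \rng(f|_{s_n}) \cap \{0,\dots,n-1\}$ is the resulting snapshot of $\rng(f)$ below $n$. One imposes the constraints $s_0 < s_1 < \dots < s_{|\sigma|-1}$ together with a computable upper bound such as $s_n \leq 2^n$, which makes $T$ finitely branching; the tree is infinite because at each level one may always take the maximum allowed stage. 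Applying $\rkl$ to $T$ yields an infinite homogeneous partial function $h$, and because the stages $s_n^{(h)}$ are strictly increasing along $\dom(h)$, they diverge to infinity, so that $\bigcup_{n \in \dom(h)} A_n^{(h)}$ equals $\rng(f)$ as a set.

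The main obstacle is ensuring that $\rng(f)$ is $\Delta^0_1$-definable from $h$ and $f$, not merely $\Sigma^0_1$: the union description above is only r.e., so by itself it would require $\aca$ (rather than $\rca$) to produce the set. To resolve this, the construction must force $h$ to also provide, at each position in its domain, a computably-verifiable upper bound on $f^{-1}(m)$ for all $m < n$ that belong to $\rng(f)$, which turns the $\Sigma^0_1$ description into a $\Delta^0_1$ one via bounded search. Arranging this requires enriching the branching alphabet at each level of $T$ with explicit ``modulus'' data; this is exactly where unbounded (though finite-at-each-node) branching is essential, distinguishing $\rkl$ from the bounded case (Theorem~\ref{thm-RbWKLisWKL}), in which only $\wkl$ is obtained. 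Once such bounds are extracted from $h$, standard $\Delta^0_1$ comprehension in $\rca$ yields the set $\rng(f)$, completing the reduction to $\aca$.
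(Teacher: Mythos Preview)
Your plan correctly identifies the crux---that unbounded branching is essential---but the proposal leaves precisely the hard step unspecified.  The first construction you describe, with $s_n\le 2^n$, yields a \emph{bounded} tree; by Theorem~\ref{thm-RbWKLisWKL} a homogeneous function for such a tree is available already from $\wkl$, so that construction cannot give $\aca$.  You recognize this and propose to ``enrich the branching alphabet with explicit modulus data,'' but you do not say how to do so while keeping the tree simultaneously (a) recursive in $f$, (b) finitely branching, and (c) provably infinite in $\rca$.  This is the entire difficulty: a naive modulus tree (nodes record a stage $s_n$ by which $\rng(f)\cap\{0,\dots,n-1\}$ has stabilized) is not recursive, since stabilization is $\Pi^0_1$; and if instead one merely records \emph{some} stage without a stability constraint, the resulting $A_n$'s along a homogeneous $h$ need not cohere, so one cannot conclude $m\notin A_n\Rightarrow m\notin\rng(f)$.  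A workable fix exists (encode $(A_n,s_n)$, require $s_n$ to be the \emph{least} stage $>s_{n-1}$ with $\rng(f|_{s_n})\cap n=A_n$, and impose the coherence constraint $A_j=A_i\cap j$ for $j<i$; this keeps the tree at most $2$-branching in the $A$-coordinate yet unbounded in the $s$-coordinate), but none of this is in your plan, and it is exactly what makes the argument go through.

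The paper takes a different and cleaner route.  Rather than attacking $\rng(f)$ directly, it uses the known equivalence of $\aca$ with $\kl$ restricted to trees in which every node has at most two immediate successors, and proves $\rkl\Rightarrow\kl_{\text{2-branching}}$.  Given such a tree $T$, it builds $S=\{\sigma:(\forall i<|\sigma|)(\tau_{\sigma(i)}\in T\wedge|\tau_{\sigma(i)}|=i)\wedge(\forall i\le j<|\sigma|)(\tau_{\sigma(i)}\preceq\tau_{\sigma(j)})\}$, where $(\tau_i)$ is a one-to-one enumeration of $\Nb^{<\Nb}$.  Each node of $S$ has at most two successors (because $T$ is $2$-branching and the enumeration is injective), yet $S$ is unbounded because the indices $\sigma(i)$ can be arbitrarily large.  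A homogeneous $h$ for $S$ then immediately gives $\bigcup_{i\in\dom(h)}\tau_{h(i)}$ as an honest path through $T$, with no $\Sigma^0_1/\Delta^0_1$ bookkeeping required.  This sidesteps exactly the obstacle your plan runs into.
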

\begin{proof}
Let $\kl_\textup{2-branching}$ denote $\kl$ restricted to trees $T \subseteq \Nb^{<\Nb}$ in which every $\sigma \in T$ has at most two immediate successors in $T$.  We take advantage of the fact that $\aca$, $\kl$, and $\kl_\textup{2-branching}$ are pairwise equivalent over $\rca$ (see~\cite{Simpson2009} Theorem~III.7.2).  Clearly $\rca \vdash \kl \imp \rkl$, so it suffices to show that $\rca \vdash \rkl \imp \kl_\textup{2-branching}$.  Thus let $T \subseteq \Nb^{<\Nb}$ be an infinite, finitely branching tree in which every $\sigma \in T$ has at most two immediate successors in $T$.  Let $(\tau_i)_{i \in \Nb}$ be a one-to-one enumeration of $\Nb^{<\Nb}$.  Define the tree $S$ by
\begin{align*}
S = \{\sigma \in \Nb^{<\Nb} : (\forall i, j < |\sigma|)[\tau_{\sigma(i)} \in T \andd |\tau_{\sigma(i)}| = i \andd (i \leq j \imp \tau_{\sigma(i)} \preceq \tau_{\sigma(j)})]\}.
\end{align*}
Clearly $S$ is a tree.  $S$ is infinite because $T$ is infinite and, given a $\tau$ in $T$, it is easy to produce a $\sigma$ in $S$ of the same length.  Now consider a $\sigma \in S$.  For $\sigma^\smf n$ to be in $S$, it must be that $\tau_n$ is an immediate successor of $\tau_{\sigma(|\sigma|-1)}$ on $T$ (or that $\tau_n = \emptyset$ in the case that $\sigma = \emptyset$).  As the enumeration $(\tau_n)_{n \in \Nb}$ is one-to-one and every string in $T$ has at most two immediate successors in $T$, it is also the case that every string in $S$ has at most two immediate successors in $S$.  In particular, $S$ is finitely branching.

By $\rkl$, let $h$ be infinite and homogeneous for $S$, and let $S_h$ be the infinite tree $\{\sigma \in S : \text{$h$ is homogeneous for $\sigma$}\}$.  Note that $S_h$ contains strings of arbitrary length because it is infinite and every string in $S_h$ contains at most two immediate successors in $S_h$.  Now, if $i$ and $j$ are in $\dom(h)$ with $i \leq j$, then $\tau_{h(i)}$ and $\tau_{h(j)}$ are in $T$ with $\tau_{h(i)} \preceq \tau_{h(j)}$, which may be seen by considering a $\sigma \in S_h$ of length $j+1$.  Hence $\bigcup_{i \in \dom(h)}\tau_{h(i)}$ is a path through $T$, as desired.
\end{proof}

In fact, the above proof shows that the restriction of $\rkl$ to trees in which each string has at most two immediate successors is also equivalent to $\aca$ over $\rca$.

Recall that a tree $T \subseteq \Nb^{<\Nb}$ is \emph{bounded} if there is a function $g \colon \Nb \imp \Nb$ such that $(\forall \sigma \in T)(\forall n < |\sigma|)(\sigma(n) < g(n))$.

\begin{definition}
$\mathsf{RbWKL}$ is the statement ``for every infinite, bounded subtree of $\Nb^{<\Nb}$, there is an infinite homogeneous partial function.''
\end{definition}

\begin{theorem}\label{thm-RbWKLisWKL}
$\rca \vdash \wkl \biimp \mathsf{RbWKL}$.\footnote{This theorem was obtained independently by Flood (personal communication).}
\end{theorem}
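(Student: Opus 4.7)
The forward direction $\wkl \imp \mathsf{RbWKL}$ is immediate: over $\rca$, $\wkl$ is equivalent to bounded K\"onig's lemma (\cite{Simpson2009}~Lemma~IV.1.4), so every infinite bounded subtree of $\Nb^{<\Nb}$ has a genuine path $f \in \Nb^{\Nb}$, and any such $f$ is automatically an infinite partial function homogeneous for the tree in the sense of Definition~\ref{def:hom-gen-tree}.

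For the reverse direction the plan is to adapt the encoding trick used in Theorem~\ref{thm-RKLisACA}. Given an infinite tree $S \subseteq 2^{<\Nb}$, I would build a bounded tree $T \subseteq \Nb^{<\Nb}$ whose $i$-th coordinate codes a length-$i$ node of $S$, with successive coordinates forced to extend one another. Fix, for each $i$, the canonical bijection $k \mapsto \tau_i^k$ between $\{0,1,\dots,2^i-1\}$ and $2^i$, and set
\begin{align*}
T = \{\sigma \in \Nb^{<\Nb} : (\forall i < |\sigma|)(\sigma(i) < 2^i \andd \tau_i^{\sigma(i)} \in S) \andd (\forall i \leq j < |\sigma|)(\tau_i^{\sigma(i)} \preceq \tau_j^{\sigma(j)})\}.
\end{align*}
This $T$ is recursive in $S$, closed under prefixes, bounded by $g(i) = 2^i$, and infinite because any $\tau \in S$ of length $n$ yields an element of $T$ of the same length via $\sigma(i) = $ the code of $\tau \uh i$.

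Next, apply $\mathsf{RbWKL}$ to $T$ to obtain an infinite homogeneous partial function $h$ (with $\dom(h)$ a set). The compatible subtree $T_h = \{\sigma \in T : h \text{ is homogeneous for } \sigma\}$ is infinite and, being a subtree of the finitely branching $T$, contains strings of arbitrary length. For any $i \leq j$ in $\dom(h)$, pick $\sigma \in T_h$ with $|\sigma| > j$; the chain clause in the definition of $T$ then forces $\tau_i^{h(i)} = \tau_i^{\sigma(i)} \preceq \tau_j^{\sigma(j)} = \tau_j^{h(j)}$. Hence the strings $\{\tau_i^{h(i)} : i \in \dom(h)\}$ form a $\preceq$-chain inside $S$, and, since $\dom(h)$ is unbounded, $\Delta^0_1$-comprehension defines $f \in 2^{\Nb}$ by $f(n) = \tau_i^{h(i)}(n)$ for the least $i \in \dom(h)$ with $i > n$; this $f$ satisfies $f \uh i = \tau_i^{h(i)} \in S$ for every $i \in \dom(h)$ and is therefore a path through $S$.

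The step requiring the most care is the claim that $T_h$ contains nodes of arbitrary length, which must be derived within $\rca$ from the infinitude of $T_h$ together with the bound $\sigma(i) < 2^i$. This is a routine bounded pigeonhole argument --- for any target $N$ the set of potential nodes in $T_h$ of length $< N$ is bounded in size by $\prod_{i<N} 2^i$, so infinitude of $T_h$ forces the existence of a node of length $\geq N$ --- and in particular it needs no form of K\"onig's lemma, so the whole construction stays inside $\rca$.
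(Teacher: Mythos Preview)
Your proof is correct and follows essentially the same approach as the paper: both encode an infinite binary tree $S$ as a bounded tree whose $i$th coordinate records (the index of) a length-$i$ node of $S$, with the chain clause forcing successive coordinates to extend one another, so that an infinite homogeneous function yields a cofinal $\preceq$-chain in $S$ and hence a path. The only cosmetic differences are that the paper uses a single length-lexicographic enumeration of $2^{<\Nb}$ rather than level-by-level bijections, and justifies that $T_h$ has arbitrarily long nodes by noting the tree is at-most-$2$-branching rather than by your (equally valid) bounded-pigeonhole count.
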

\begin{proof}
Over $\rca$, $\wkl$ implies $\mathsf{RbWKL}$ because $\wkl$ implies bounded K\"onig's lemma, which clearly implies $\mathsf{RbWKL}$.  Thus it suffices to show that $\mathsf{RbWKL}$ implies $\wkl$ over $\rca$.  This can be done by following the proof of Theorem~\ref{thm-RKLisACA}.  Let $T \subseteq \str$ be an infinite tree.  Let $(\tau_i)_{i \in \Nb}$ be the enumeration of $\str$ in length-lexicographic order, and let $g \colon \Nb \imp \Nb$ be a function such that $\forall n,i(|\tau_i| = n \imp i < g(n))$.  Define $S$ from $T$ as in Theorem~\ref{thm-RKLisACA}.  Then $(\forall \sigma \in S)(\forall i < |\sigma|)(\sigma(i) < g(i))$.  Thus $S$ is bounded by $g$.  The rest of the proof is similar to that of Theorem~\ref{thm-RKLisACA}.  
\end{proof}

We remark that it is not difficult to strengthen Theorem~\ref{thm-RbWKLisWKL} by fixing the function bounding the tree in the Ramsey-type bounded K\"onig's lemma instance to be an arbitrarily slow growing order function.  Indeed, $\rca$ proves the statement ``for every order function $g$, $\wkl$ if and only if Ramsey-type K\"onig's lemma holds for infinite subtrees of $\Nb^{<\Nb}$ bounded by $g$.''  However, as we will see next, it is not possible to replace an order function by a constant function.

\subsection{Locality and $k$-branching trees}\label{sect:locality-and-branching}
We analyze a notion of \emph{locality} together with Ramsey-type weak K\"onig's lemma for $k$-branching trees.  These notions aid our analysis of Ramsey-type analogs of other combinatorial principles.  Consider a function $f \colon [\Nb]^n \imp k$.  $\rt^n_k$ asserts the existence of an infinite homogeneous set $H \subseteq \Nb$.  However, for the purpose of some particular application, we may want the infinite homogeneous set $H$ to be a subset of some pre-specified infinite set $X \subseteq \Nb$.  This is the idea behind locality, and in such a situation we say that the $\rt^n_k$-instance $f$ has been \emph{localized} to $X$.  It is easy to see that $\rt^n_k$ proves that every $\rt^n_k$-instance can be localized to every infinite $X \subseteq \Nb$.  The following proposition is well-known and is often used implicitly, such as when proving $\rt^2_3$ from $\rt^2_2$.

\begin{proposition}\label{prop-lrt}
The following statements are equivalent over $\rca$:
\begin{itemize}
\item[(i)] $\rt^n_k$
\item[(ii)] For every $f \colon [\Nb]^n \imp k$ and every infinite $X \subseteq \Nb$, there is an infinite $H \subseteq X$ that is homogeneous for $f$.
\end{itemize}
\end{proposition}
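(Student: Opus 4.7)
The direction (ii) $\imp$ (i) is immediate by taking $X = \Nb$, so the plan is to focus on the forward direction (i) $\imp$ (ii). My approach is the standard \emph{relativization to an infinite set} trick: given $f \colon [\Nb]^n \imp k$ and an infinite $X \subseteq \Nb$, I will translate the problem back to a Ramsey problem on all of $\Nb$ by pushing forward along the enumeration of $X$, apply $\rt^n_k$, and then push the resulting homogeneous set back into $X$.

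First, I would argue inside $\rca$ that the principal function $p_X \colon \Nb \imp X$, which enumerates $X$ in strictly increasing order, exists. This is the standard fact that every infinite set in $\rca$ has a strictly increasing enumeration, obtained by primitive recursion with bounded search: $p_X(0)$ is the least element of $X$, and $p_X(m+1)$ is the least element of $X$ exceeding $p_X(m)$. Then I would define an auxiliary coloring $g \colon [\Nb]^n \imp k$ by
\begin{align*}
g(\{i_1 < i_2 < \dots < i_n\}) = f(\{p_X(i_1), p_X(i_2), \dots, p_X(i_n)\}),
\end{align*}
which exists by $\Delta^0_1$ comprehension since $p_X$ is total and $f$ is a function.

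Next I would apply $\rt^n_k$ to $g$ to obtain an infinite $g$-homogeneous set $H' \subseteq \Nb$ with homogeneous color $c < k$. The candidate for the desired homogeneous subset of $X$ is $H = \{p_X(i) : i \in H'\}$, which again exists in $\rca$ and is infinite (because $H'$ is infinite and $p_X$ is strictly increasing, hence injective) and contained in $X$. Finally, for any $\{x_1 < \dots < x_n\} \in [H]^n$, there are $i_1 < \dots < i_n$ in $H'$ with $x_j = p_X(i_j)$, so by definition $f(\{x_1, \dots, x_n\}) = g(\{i_1, \dots, i_n\}) = c$, showing that $H$ is $f$-homogeneous with color $c$.

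There is no real obstacle here; the only thing to double-check is that all the set-existence steps (the principal function $p_X$, the coloring $g$, the pushforward $H$) are $\Delta^0_1$ over the given data and hence available in $\rca$, which they are. The result generalizes directly to the case of $\rt^n_{<\infty}$ and is the kind of lemma one typically uses silently, but since later Ramsey-type variants (such as $\rwkl$-style restrictions to a given infinite set in Theorem~\ref{thm:lrwkl-rwkl}) depend on exactly this kind of localization manoeuvre, recording the proof explicitly here is worthwhile.
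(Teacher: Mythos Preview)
Your proof is correct and follows essentially the same approach as the paper: both enumerate $X$ in increasing order, pull back the coloring along this enumeration, apply $\rt^n_k$, and push the resulting homogeneous set forward into $X$. The only differences are notational (your $p_X$ and $H'$ are the paper's $(x_i)_{i\in\Nb}$ and $H_0$).
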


\begin{proof}
Clearly $(ii) \imp (i)$, so it suffices to show that $(i) \imp (ii)$.  Let $f$ and $X$ be as in $(ii)$.  Let $(x_i)_{i \in \Nb}$ enumerate $X$ in increasing order.  Define $g \colon [\Nb]^n \imp k$ by $g(i_0, i_1, \dots, i_{n-1}) = f(x_{i_0}, x_{i_1}, \dots, x_{i_{n-1}})$ for increasing $n$-tuples $(i_0, i_1, \dots, i_{n-1})$.  Apply $\rt^n_k$ to $g$ to get an infinite $H_0 \subseteq \Nb$ that is homogeneous for $g$ with some color $c < k$.  Let $H = \{x_i : i \in H_0\}$.  Then $H \subseteq X$ is infinite, and $H$ is homogeneous for $f$ with color $c$ because if $x_{i_0} < x_{i_1} < \cdots < x_{i_{n-1}}$ are in $H$, then $i_0 < i_1 < \cdots < i_{n-1}$ are in $H_0$, hence $f(x_{i_0}, x_{i_1}, \dots, x_{i_{n-1}}) = g(i_0, i_1, \dots, i_{n-1}) = c$.
\end{proof}

By analogy with Proposition~\ref{prop-lrt}, we formulate $\lrwkl$, a localized variant of Ramsey-type weak K\"{o}nig's lemma.

\begin{definition} \index{Subsystems!$\lrwkl$}
$\lrwkl$ is the statement ``for every infinite tree $T \subseteq 2^{<\Nb}$ and every infinite $X \subseteq \Nb$, there is an infinite $H \subseteq X$ that is homogeneous for $T$.''
\end{definition}

\begin{lemma}\label{lem:rwkl-lrwkl}
$\rca \vdash \rwkl \biimp \lrwkl$.
\end{lemma}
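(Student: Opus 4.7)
The direction $\lrwkl \imp \rwkl$ is immediate by taking $X = \Nb$ in any instance of $\lrwkl$. For $\rwkl \imp \lrwkl$, my plan is to mimic the standard localization argument of Proposition~\ref{prop-lrt} by relabeling bit positions through the set $X$. Given an infinite tree $T \subseteq 2^{<\Nb}$ and an infinite $X \subseteq \Nb$ enumerated in strictly increasing order as $(x_i)_{i \in \Nb}$, I would define a compressed tree
\[
T' = \{\sigma \in 2^{<\Nb} : (\exists \tau \in T)(|\tau| = x_{|\sigma|-1}+1 \andd (\forall i < |\sigma|)(\sigma(i) = \tau(x_i)))\},
\]
with $\emptyset$ adjoined. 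Because the existential quantifier on $\tau$ is bounded in terms of $|\sigma|$, the defining formula is $\Delta^0_1$ (using that $T$ and $X$ exist as sets), so $T'$ exists by recursive comprehension. It is a tree because whenever $\sigma \in T'$ is witnessed by $\tau$, any prefix $\sigma' \preceq \sigma$ is witnessed by $\tau \restriction (x_{|\sigma'|-1}+1) \in T$. It is infinite because $T$ is an infinite subtree of $2^{<\Nb}$ and therefore contains strings of every length; any $\tau \in T$ of length $x_{n-1}+1$ yields a $\sigma \in T'$ of length $n$ via $\sigma(i) = \tau(x_i)$.

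Applying $\rwkl$ to $T'$ then produces an infinite $H_0 \subseteq \Nb$ homogeneous for $T'$. I would let $H = \{x_i : i \in H_0\}$, which exists by $\Delta^0_1$-comprehension, is infinite (since $(x_i)$ is strictly increasing), and is contained in $X$ by construction.

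To verify that $H$ is homogeneous for $T$, I would note that $\{\sigma \in T' : H_0 \text{ is homogeneous for } \sigma\}$ is an infinite subtree of $2^{<\Nb}$ and hence contains strings of every length. Given such a $\sigma$ of length $\geq n$ with witnessing color $c$, the witness $\tau \in T$ has length $> x_{n-1}$, and $\tau(x_i) = \sigma(i) = c$ for every $i \in H_0$ with $i < |\sigma|$; equivalently, $H$ is homogeneous for $\tau$ with color $c$. Letting $n$ range over $\Nb$ yields strings $\tau \in T$ of arbitrarily large length for which $H$ is homogeneous, so the set $\{\tau \in T : H \text{ is homogeneous for } \tau\}$ is infinite, as required. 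The only mild obstacle is the bookkeeping around the definition of $T'$—specifically, bounding $|\tau|$ so the definition lies in $\Delta^0_1$ and ensuring homogeneity passes cleanly between $T$ and $T'$—but both are routine once the coordinate relabeling by $(x_i)$ is in place.
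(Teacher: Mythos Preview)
Your proposal is correct and follows essentially the same approach as the paper: define a compressed tree by reading off bits at the positions $x_i$, apply $\rwkl$ to it, and push the resulting homogeneous set forward along $i \mapsto x_i$. The only cosmetic difference is the length of the witness $\tau$ you require ($x_{|\sigma|-1}+1$ versus the paper's $x_{|\sigma|}$), which necessitates your explicit adjunction of $\emptyset$ but is otherwise immaterial.
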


\begin{proof}
Clearly $\rca \vdash \lrwkl \imp \rwkl$, so it suffices to prove that $\rca \vdash \rwkl \imp \lrwkl$.  Let $T \subseteq 2^{<\Nb}$ be an infinite tree and $X \subseteq \Nb$ be an infinite set.  Let $(x_i)_{i \in \Nb}$ enumerate $X$ in increasing order.  Let $S \subseteq 2^{<\Nb}$ be the set
\begin{align*}
S = \{\sigma \in 2^{<\Nb} : (\exists \tau \in T)(|\tau| = x_{|\sigma|} \andd (\forall i < |\sigma|)(\sigma(i) = \tau(x_i)))\}.
\end{align*}
$S$ exists by $\Delta^0_1$ comprehension, and $S$ is clearly closed under initial segments.  To see that $S$ is infinite, let $n \in \Nb$ and, as $T$ is infinite, let $\tau \in T$ have length $x_n$.  Then the $\sigma \in 2^n$ such that $(\forall i < n)(\sigma(i) = \tau(x_i))$ is a string in $S$ of length $n$.  Now apply $\rwkl$ to $S$ to get an infinite $H_0 \subseteq \Nb$ that is homogeneous for $S$ with some color $c < 2$.  Let $H = \{x_i : i \in H_0\}$.  $H$ is an infinite subset of $X$; we show that $H$ is homogeneous for $T$ with color $c$.  Given $n \in \Nb$, let $m \in H_0$ be such that $x_m > n$.  By the homogeneity of $H_0$ for $S$, let $\sigma \in S$ be of length $m$ and such that $(\forall i \in H_0)(i < |\sigma| \imp \sigma(i) = c)$.  By the definition of $S$, there is a $\tau \in T$ of length $x_m$ such that $(\forall i < |\sigma|)(\sigma(i) = \tau(x_i))$.  So if $x_i \in H$ is less than $|\tau| = x_m$, then $i$ is in $H_0$ and is less than $|\sigma| = m$, in which case $\tau(x_i) = \sigma(i) = c$.  Thus $H$ is homogeneous for $\tau$ with color $c$, and, as $|\tau| = x_m > n$, $\tau \restriction n$ is a string in $T$ of length $n$ for which $H$ is homogeneous with color $c$.
\end{proof}

Similarly, we can define a localized variant of Ramsey-type weak weak K\"onig's lemma.

\begin{definition} \index{Subsystems!$\lrwwkl$}
$\lrwwkl$ is the statement ``for every tree $T \subseteq 2^{<\Nb}$ of positive measure and every infinite $X \subseteq \Nb$, there is an infinite $H \subseteq X$ that is homogeneous for $T$.''
\end{definition}

\begin{theorem}\label{thm:lrwwkl-equiv}
The following statements are equivalent over $\rca$:
\begin{itemize}
  \item[(i)] $\dnr$
  \item[(ii)] $\rwwkl$
  \item[(iii)] $\lrwwkl$.
\end{itemize}
\end{theorem}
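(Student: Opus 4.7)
The plan is to handle the three equivalences around the already-proved Theorem~\ref{thm:dnr-rwwkl}. The equivalence (i) $\biimp$ (ii) is exactly Theorem~\ref{thm:dnr-rwwkl}, and the direction (iii) $\imp$ (ii) is immediate by taking $X = \Nb$. So only the implication (ii) $\imp$ (iii) requires work, and it is natural to try to adapt the construction from Lemma~\ref{lem:rwkl-lrwkl}, which handles the analogous localization step for $\rwkl$. The hope is that the same tree construction used there also preserves positive measure, so that the new tree is a legitimate instance of $\rwwkl$.

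Concretely, given an instance of $\lrwwkl$ consisting of a tree $T \subseteq 2^{<\Nb}$ with $\mu(T) \geq 2^{-c}$ and an infinite set $X \subseteq \Nb$ enumerated in increasing order as $(x_i)_{i \in \Nb}$, I would define
\begin{align*}
S = \{\sigma \in 2^{<\Nb} : (\exists \tau \in T)(|\tau| = x_{|\sigma|} \andd (\forall i < |\sigma|)(\sigma(i) = \tau(x_i)))\},
\end{align*}
exactly as in the proof of Lemma~\ref{lem:rwkl-lrwkl}. Apply $\rwwkl$ to $S$ to obtain an infinite $H_0$ homogeneous for $S$, and set $H = \{x_i : i \in H_0\}$. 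The verification that $H \subseteq X$ is infinite and homogeneous for $T$ is essentially line-for-line the same as in Lemma~\ref{lem:rwkl-lrwkl}.

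The one point that has to be checked, and which is the only real content beyond Lemma~\ref{lem:rwkl-lrwkl}, is that $S$ has positive measure, so that $\rwwkl$ actually applies. For each $n$, every $\tau \in T^{x_n}$ projects to a unique $\sigma \in S^n$ via the map $\tau \mapsto \la \tau(x_0), \dots, \tau(x_{n-1}) \ra$, and each fiber of this projection has size at most $2^{x_n - n}$. Hence
\begin{align*}
|S^n| \;\geq\; \frac{|T^{x_n}|}{2^{x_n - n}} \;\geq\; \frac{2^{x_n - c}}{2^{x_n - n}} \;=\; 2^{n - c},
\end{align*}
so $\mu(S) \geq 2^{-c}$, and this bound is verifiable in $\rca$ level by level from the corresponding bound on $T$. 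I expect no serious obstacle beyond this measure-preservation computation; the rest is just reusing the localization trick from Lemma~\ref{lem:rwkl-lrwkl} together with Theorem~\ref{thm:dnr-rwwkl} to close the triangle.
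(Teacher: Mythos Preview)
Your proposal is correct and follows essentially the same route as the paper: reduce (ii) $\imp$ (iii) to checking that the tree $S$ from Lemma~\ref{lem:rwkl-lrwkl} has positive measure whenever $T$ does, via the fiber-counting bound $|S^n| \geq |T^{x_n}|/2^{x_n - n}$. The paper states this inequality and the resulting measure comparison in almost the same form, so there is no substantive difference between your argument and theirs.
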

\begin{proof}
Theorem~\ref{thm:dnr-rwwkl} states  that $(i) \biimp (ii)$, and $(iii) \imp (ii)$ is clear.  To see that $(ii) \imp (iii)$, we need only check that the tree $S$ constructed in Lemma~\ref{lem:rwkl-lrwkl} has positive measure when the tree $T$ has positive measure.  To this end, notice that for every $k \in \Nb$, 
\begin{align*}
\card{\set{\sigma \in S : \card{\sigma} = k}} \geq \frac{\card{\set{\tau \in T : \card{\tau} = x_k}}}{2^{x_k - k}}.
\end{align*}
Thus
\begin{align*}
\frac{\card{\set{\sigma \in S : \card{\sigma} = k}}}{2^k} \geq \frac{\card{\set{\tau \in T : \card{\tau} = x_k}}}{2^{x_k}},
\end{align*}
which implies that $S$ has positive measure if $T$ has positive measure.
\end{proof}

Using $\lrwkl$, we prove variants of $\rwkl$ and $\lrwkl$ for $k$-branching trees.  Define a set $H \subseteq \Nb$ to be \emph{homogeneous for a string $\sigma \in k^{<\Nb}$ with color $c < k$} and a set $H \subseteq \Nb$ to be \emph{homogeneous for an infinite tree $T \subseteq k^{<\Nb}$} as in Definition~\ref{def-RWKL} but with $k$ in place of $2$.

\index{Subsystems!$\rwkl_k$}
\index{Subsystems!$\lrwkl_k$}
\begin{definition}{\ }
\begin{itemize}
\item $\rwkl_k$ is the statement ``for every infinite tree $T \subseteq k^{<\Nb}$, there is an infinite $H \subseteq \Nb$ that is homogeneous for $T$.''
\item $\lrwkl_k$ is the statement ``for every infinite tree $T \subseteq k^{<\Nb}$ and every infinite $X \subseteq \Nb$, there is an infinite $H \subseteq X$ that is homogeneous for $T$.''
\end{itemize}
\end{definition}

\begin{lemma}\label{lem:rwkl-rwklk}
For every $k \in \omega$, $\rca \vdash \lrwkl \rightarrow \rwkl_k$.
\end{lemma}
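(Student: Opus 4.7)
The plan is a metatheoretic induction on $k \in \omega$. For $k \leq 2$ the statement $\rwkl_k$ is either trivial or coincides with $\rwkl$, which follows from $\lrwkl$ by Lemma~\ref{lem:rwkl-lrwkl}. For the inductive step, assume $\rca + \lrwkl \vdash \rwkl_k$ and let $T \subseteq (k+1)^{<\Nb}$ be an infinite tree.

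The first move is to peel off the top color $k$. I would define a binary tree $S \subseteq 2^{<\Nb}$ by
\begin{align*}
S = \set{\sigma \in 2^{<\Nb} : (\exists \tau \in T)[|\tau| \geq |\sigma| \andd (\forall i < |\sigma|)(\sigma(i) = 1 \biimp \tau(i) = k)]},
\end{align*}
which exists by $\Delta^0_1$ comprehension, is closed under initial segments, and is infinite since $T$ is. Applying $\rwkl$ (obtained from $\lrwkl$) to $S$ yields an infinite $H$ homogeneous for $S$ with some color $c < 2$. If $c = 1$, then for infinitely many $\tau \in T$ we have $\tau(i) = k$ for every $i \in H$ with $i < |\tau|$, and $H$ is already homogeneous for $T$ with color $k$.

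Otherwise $c = 0$, and the subtree $T' = \set{\tau \in T : (\forall i \in H)(i < |\tau| \imp \tau(i) \neq k)}$ is infinite. Let $(h_j)_{j \in \Nb}$ enumerate $H$ in increasing order and define the projection
\begin{align*}
T'|_H = \set{\rho \in k^{<\Nb} : (\exists \tau \in T')[|\tau| > h_{|\rho| - 1} \andd (\forall j < |\rho|)(\rho(j) = \tau(h_j))]}.
\end{align*}
Since strings in $T'$ take values in $\set{0, 1, \dots, k-1}$ at positions of $H$, this is genuinely a subtree of $k^{<\Nb}$, and it is infinite because for every $n$ one can pick $\tau \in T'$ with $|\tau| > h_{n-1}$. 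By the inductive hypothesis $\rwkl_k$, there is an infinite set $H_0$ homogeneous for $T'|_H$ with some color $c' < k$; set $H' = \set{h_j : j \in H_0}$.

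It then remains to verify that $H'$ is homogeneous for $T$ with color $c'$. Given any $n$, I would choose $\rho$ in the homogeneous subtree of $T'|_H$ with $h_{|\rho|-1} \geq n$, together with a witness $\tau \in T'$ satisfying $|\tau| > h_{|\rho|-1}$, and then truncate to $\tau_0 = \tau \restriction (h_{|\rho|-1} + 1)$. Every $i \in H' \cap |\tau_0|$ is of the form $h_j$ with $j < |\rho|$ and $j \in H_0$, so $\tau_0(i) = \rho(j) = c'$, and $|\tau_0| \geq n$. The only step that requires real care is this truncation to length exactly $h_{|\rho|-1}+1$, which is what prevents the unbounded positions of $H'$ beyond the controlled range of $\rho$ from spoiling the lift from homogeneity on the projection back to homogeneity on $T$.
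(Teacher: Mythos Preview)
Your argument is correct, but it follows a genuinely different route from the paper's proof.

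The paper reduces to the case $k = 2^m$ and encodes each digit in $2^m$ by its $m$-bit binary expansion, producing a single tree $S_0 \subseteq 2^{<\Nb}$.  It then makes $m$ successive applications of $\lrwkl$, each time localizing to the set $\{n+1 : n \in H_\ell\}$ so that the resulting homogeneous sets $H_0, H_1, \dots, H_{m-1}$ lie in the residue classes $0,1,\dots,m-1 \pmod m$ respectively and are nested in the sense $n \in H_{\ell+1} \Rightarrow n-1 \in H_\ell$.  The final homogeneous set and color are read off from the bit pattern $c_0c_1\cdots c_{m-1}$.

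Your proof instead proceeds by external induction on $k$, peeling off the top color one at a time: you first use $\rwkl$ on the ``is it color $k$ or not'' binary tree to either win outright or obtain an infinite $H$ on which $T$ avoids color $k$, and then you project $T'$ onto the positions in $H$ to get a $k$-branching tree to which the inductive hypothesis applies.  Note that your projection step is exactly the maneuver behind Lemma~\ref{lem:rwkl-lrwkl}; in effect you are re-proving localization inside the induction rather than invoking $\lrwkl$ directly.  A consequence is that your argument really only uses $\rwkl$, not $\lrwkl$, which is harmless here since they are equivalent.

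Two minor technical points you should make explicit: in the definition of $S$ (and similarly of $T'|_H$) the existential quantifier over $\tau$ is \emph{a priori} unbounded, but since $T$ is downward closed one may equivalently require $|\tau| = |\sigma|$ (respectively $|\tau| = h_{|\rho|-1}+1$), which gives the needed $\Delta^0_1$ definition; and the clause $|\tau| > h_{|\rho|-1}$ needs a convention for $\rho = \emptyset$.
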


\begin{proof}
If $j < k$ then $\rca \vdash \rwkl_k \imp \rwkl_j$ by identifying $j^{<\Nb}$ with the obvious subtree of $k^{<\Nb}$.  It therefore suffices to show that, for every $k \in \omega$, $\rca \vdash \lrwkl \imp \rwkl_{2^k}$.

Let $T \subseteq (2^k)^{<\Nb}$ be an infinite tree.  The idea of the proof is to code $T$ as a subtree of $2^{<\Nb}$ by coding each number less than $2^k$ by its binary expansion.  We then obtain a homogeneous set for $T$ by using $k$ applications of $\lrwkl$.

For each $a < 2^k$ and each $i < k$, let $a(i) < 2$ denote the $(i+1)$\textsuperscript{th} digit in the binary expansion of $a$.  Then to each $\sigma \in (2^k)^{<\Nb}$ associate a string $\tau_\sigma \in 2^{<\Nb}$ of length $k|\sigma|$ by $\tau_\sigma(ki+j) = \sigma(i)(j)$ (i.e., the $j$\textsuperscript{th} digit in the binary expansion of $\sigma(i)$) for all $i < |\sigma|$ and all $j < k$.  We define infinite trees $2^{<\Nb} \supseteq S_0 \supseteq S_1 \supseteq \cdots \supseteq S_{k-1}$, and, for each $i < k$, we find an infinite set $H_i$ homogeneous for $S_i$.  Moreover, the sets $H_i$ will be such that $(\forall i < k)(\forall n \in H_i)(n \equiv i \mod k)$ and $(\forall i < k-1)(\forall n)(n \in H_{i+1} \imp n-1 \in H_i)$.  Let $S_0 = \{\tau \in 2^{<\Nb} : (\exists \sigma \in T)(|\sigma| = \lceil |\tau| / k\rceil \andd \tau \subseteq \tau_\sigma)\}$.  That is, $S_0$ consists of the substrings of the binary expansions of the strings in $T$.  $S_0$ exists by $\Delta^0_1$ comprehension, $S_0$ is clearly a tree, and $S_0$ is infinite because if $n \in \Nb$ and $\sigma \in T$ has length $n$, then $\tau_\sigma \restriction n$ is a member of $S_0$ of length $n$.  Let $X_0 = \{n \in \Nb : n \equiv 0 \mod k\}$.  Apply $\lrwkl$ to $S_0$ and $X_0$ to get an infinite set $H_0 \subseteq X_0$ and a color $c_0 < 2$ such that $H_0$ is homogeneous for $S_0$ with color $c_0$.  Now suppose that $S_\ell$, $H_\ell$, and $c_\ell$ are defined for some $\ell < k-1$.  Let $S_{\ell + 1} = \{\tau \in S_\ell : (\forall j < |\tau|)(j \in H_\ell \imp \tau(j) = c_\ell)\}$.  $S_{\ell+1}$ exists by $\Delta^0_1$ comprehension, it is easy to check that $S_{\ell+1}$ is a tree, and $S_{\ell+1}$ is infinite because $H_\ell$ is homogeneous for $S_\ell$ with color $c_\ell$.  Let $X_{\ell+1} = \{n+1 : n \in H_\ell\}$, and note that $(\forall n \in X_{\ell+1})(n \equiv \ell+1 \mod k)$ because $(\forall n \in H_\ell)(n \equiv \ell \mod k)$.  Apply $\lrwkl$ to $S_{\ell+1}$ and $X_{\ell+1}$ to get an infinite set $H_{\ell+1} \subseteq X_{\ell+1}$ and a color $c_{\ell+1} < 2$ such that $H_{\ell+1}$ is homogeneous for $S_{\ell+1}$ with color $c_{\ell+1}$.  By choice of $X_{\ell+1}$, we also have that $(\forall n \in H_{\ell+1})(n \equiv \ell+1 \mod k)$ and that $\forall n(n \in H_{\ell+1} \imp n-1 \in H_\ell)$.

Once $S_i$, $H_i$, and $c_i$ are defined for all $i < k$, let $H = \{n : kn+(k-1) \in H_{k-1}\}$ and let $a < 2^k$ be the number whose binary expansion is $c_0c_1 \cdots c_{k-1}$.  We show that $H$ is homogeneous for $T$ with color $a$.  Given $n \in \Nb$, let $\tau \in S_{k-1}$ be of length $kn$ and such that $H_{k-1}$ is homogeneous for $\tau$.  Let $\sigma \in (2^k)^{<\Nb}$ be such that $\tau = \tau_\sigma$.  As $\tau \in S_{k-1} \subseteq S_0$, it must be that $\sigma \in T$ by the definition of $S_0$.  It remains to show that $(\forall i \in H)(i < |\sigma| \imp \sigma(i) = a)$.  Consider $i \in H$ with $i < |\sigma|$.  The binary expansion of $\sigma(i)$ is $\tau(ki)\tau(ki+1) \cdots \tau(ki+(k-1))$, and $ki + (k-1) \in H_{k-1}$ by the definition of $H$.  Thus, $\tau(ki+(k-1)) = c_{k-1}$ because $H_{k-1}$ is homogeneous for $\tau$.  Now let $\ell$ be such that $0 \leq \ell < k-1$.  Then $ki+\ell \in H_\ell$ because $ki+(k-1) \in H_{k-1}$ and $(\forall i < k-1)(\forall m)(m \in H_{i+1} \imp m-1 \in H_i)$.  Thus $\tau(ki+j) = c_\ell$ because $\tau \in S_{k-1} \subseteq S_{\ell+1}$, and $S_{\ell+1}$ was chosen so that if $\eta \in S_{\ell+1}$ and $m < |\eta|$ is in $H_\ell$, then $\eta(m) = c_\ell$.  Thus the binary expansion of $\sigma(i)$ is $c_0c_1 \cdots c_{k-1}$, so $\sigma(i) = a$ as desired.
\end{proof}

Thus we have the following equivalences.
\begin{theorem}\label{thm:lrwkl-rwkl}
For every $k \in \omega$ with $k \geq 2$, the following statements are equivalent over $\rca$:
\smallskip
\begin{itemize}
\begin{minipage}{0.3\linewidth}  
  \item[(i)] $\rwkl$
  \item[(ii)] $\lrwkl$
\end{minipage}
\begin{minipage}{0.6\linewidth}
  \item[(iii)] $\rwkl_k$
  \item[(iv)] $\lrwkl_k$.
\end{minipage}
\end{itemize}
\end{theorem}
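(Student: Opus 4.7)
The plan is to close a cycle of implications, using the two preceding lemmas to handle most of the work and adding one new construction to complete the loop. Two of the needed implications are entirely trivial: $(iv) \imp (iii)$ follows by taking $X = \Nb$, and $(iii) \imp (i)$ follows because every infinite subtree of $2^{<\Nb}$ is an infinite subtree of $k^{<\Nb}$ for any $k \geq 2$. Lemma~\ref{lem:rwkl-lrwkl} gives $(i) \biimp (ii)$, and Lemma~\ref{lem:rwkl-rwklk} gives $(ii) \imp (iii)$. So the only remaining task is to prove an implication from one of $(i)$--$(iii)$ to $(iv)$; I will prove $(iii) \imp (iv)$.

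The idea is to adapt the coding in Lemma~\ref{lem:rwkl-lrwkl} from binary trees to $k$-ary trees. Given an infinite tree $T \subseteq k^{<\Nb}$ and an infinite set $X \subseteq \Nb$ with increasing enumeration $(x_i)_{i \in \Nb}$, define
\begin{align*}
S = \{\sigma \in k^{<\Nb} : (\exists \tau \in T)(|\tau| = x_{|\sigma|} \andd (\forall i < |\sigma|)(\sigma(i) = \tau(x_i)))\}.
\end{align*}
This set exists by $\Delta^0_1$ comprehension. It is closed under initial segments: if $\tau \in T$ witnesses $\sigma \in S$, then $\tau \restriction x_n \in T$ witnesses $\sigma \restriction n \in S$ for any $n < |\sigma|$. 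It is infinite because for each $n$ we can take any $\tau \in T$ of length $x_n$ and define $\sigma \in k^n$ by $\sigma(i) = \tau(x_i)$.

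Now apply $\rwkl_k$ to $S$ to obtain an infinite $H_0 \subseteq \Nb$ and a color $c < k$ such that $H_0$ is homogeneous for $S$ with color $c$. Set $H = \{x_i : i \in H_0\}$, which is an infinite subset of $X$. To verify that $H$ is homogeneous for $T$ with color $c$, given any $n \in \Nb$ pick $m \in H_0$ with $x_m > n$ and, using homogeneity of $H_0$ for $S$, choose $\sigma \in S$ with $|\sigma| \geq m$ and $(\forall i \in H_0)(i < |\sigma| \imp \sigma(i) = c)$. A witness $\tau \in T$ for $\sigma \in S$ has length $x_{|\sigma|} \geq x_m > n$, and for every $x_i \in H$ with $x_i < |\tau|$ we have $i \in H_0$ and $i < |\sigma|$, so $\tau(x_i) = \sigma(i) = c$. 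Thus $H$ is homogeneous for $\tau$, which has length at least $n$, giving $H$ homogeneous for $T$ with color $c$. There is no real obstacle here; the argument is essentially the same as that of Lemma~\ref{lem:rwkl-lrwkl}, with $k$ in place of $2$ throughout.
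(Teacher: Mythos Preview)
Your proposal is correct and follows essentially the same approach as the paper. The paper's proof invokes Lemma~\ref{lem:rwkl-lrwkl} for $(i)\biimp(ii)$, Lemma~\ref{lem:rwkl-rwklk} for $(ii)\imp(iii)$, notes $(iv)\imp(i)$ is clear, and for $(iii)\biimp(iv)$ simply says ``a proof analogous to that of Lemma~\ref{lem:rwkl-lrwkl}''; you supply exactly that analogous proof, replacing $2$ by $k$ throughout.
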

\begin{proof}
Lemma~\ref{lem:rwkl-lrwkl} states that $(i) \biimp (ii)$.  Lemma~\ref{lem:rwkl-rwklk} states that $(ii) \imp (iii)$.  A proof analogous to that of Lemma~\ref{lem:rwkl-lrwkl} shows that $(iii) \biimp (iv)$.  Clearly $(iv) \imp (i)$ when $k \geq 2$.
\end{proof}

The statement $\forall k \rwkl_k$ easily implies $\rt^1$ over $\rca$, and $\rt^1$ is equivalent to $\bst$ over $\rca$ (this equivalence is due to Hirst~\cite{Hirst1987}).  To see that $\rca \vdash \forall k \rwkl_k \imp \rt^1$, given a function $f \colon \Nb \imp k$, define the tree $T \subseteq k^{<\Nb}$ by $T = \{f \restriction n : n \in \Nb\}$.  Then $H$ is homogeneous for $T$ if and only if $H$ is homogeneous for $f$.  Thus $\wklz$ does not prove $\forall k \rwkl_k$ because $\wklz$ does not prove $\bst$.  (It is well-known that $\wklz$ is $\Pi^1_1$-conservative over $\rca$ and that $\rca$ does not prove $\bst$.  See \cite{Simpson2009}~Corollary~IX.2.6 and \cite{Hajek1998}~Section~IV.1.)  However, it is easy to see that $\wklz + \bst$ proves $\forall k \rwkl_k$.  Moreover, $\rca + \forall k \srt^2_k$ proves $\forall k \rwkl_k$ by essentially same argument used for $k=2$ in~\cite{Flood2012} Theorem~5.

\begin{question}\label{qu-SRT22vsRWKLk}
Does $\rca \vdash \srt^2_2 \imp \forall k \rwkl_k$?
\end{question}

The strength of having various kinds of homogeneous sets for various kinds of infinite trees is summarized in Table~\ref{tbl:path-and-hom}.  The columns correspond to the kinds of trees allowed, whereas the rows correspond to the kinds of homogeneous sets asserted to exist.  The first column considers infinite, finitely branching trees.  The second column restricts to trees whose nodes have at most two immediate successors.  The third column restricts to trees whose branching is bounded by some function.  The fourth column restricts to trees whose branching is bounded by a constant function.  The last column restricts to binary trees of positive measure.  The first row corresponds to K\"onig-like statements, that is, statements asserting the existence of paths through the tree.  The second row asserts the existence of everywhere-packed homogeneous sets.  The third row asserts the existence of sets that are homogeneous for a fixed color.  The fourth row asserts the existence of homogeneous sets that are contained in a prescribed infinite set.  The last row asserts the existence of homogeneous sets.

\def\arraystretch{1.5}
\begin{table}[htbp]
\begin{center}
\setlength\tabcolsep{6pt}
\begin{tabular}{|c|c|c|c|c|c|} \hline
 Tree & fin.\ branch.\ & \hspace{10pt}2-ary\hspace{10pt} & bounded & $k$-bounded & pos.\ meas.\ \\ \hline
 path & \multirow{5}{*}{$\aca$} & \multirow{5}{*}{$\aca$ \tiny{[\ref{thm-RKLisACA}]}} &
\multirow{5}{*}{$\wkl$ \tiny{[\ref{thm-RbWKLisWKL}]}} 
& \multirow{3}{*}{$\wkl$ \tiny{[\ref{thm:fixed-rwkl-wkl}, \ref{thm:packed-rwkl-wkl}]}} &
$\wwkl$ \\ \cline{1-1} \cline{6-6}
packed hom.\ & & & & & 
? \\
\cline{1-1} \cline{6-6}
 hom.\ fixed color & & & & & \multirow{3}{*}{$\dnr$ \tiny{[\ref{thm:dnr-rwwkl}, \ref{thm:lrwwkl-equiv}]}}\\ \cline{1-1}
\cline{5-5}
 local hom.\ & & & & 
\multirow{2}{*}{$\rwkl$ \tiny{[\ref{thm:lrwkl-rwkl}]}}  & \\
\cline{1-1}
 hom.\ & & & & 
 & \\ \hline
\end{tabular}
\caption{Paths and homogeneous sets existence for classes of trees}
\label{tbl:path-and-hom}
\end{center}
\end{table}

The question mark in Table~\ref{tbl:path-and-hom} indicates that we did not study principles asserting that trees of positive measure have everywhere-packed homogeneous functions.  It would be interesting to determine the proper analog of Theorem~\ref{thm:packed-rwkl-wkl} when the trees in item~(ii) are required to have positive measure.

\section{The strength of Ramsey-type satisfiability principles}\label{sec-RSAT}

One can conceivably consider a Ramsey-type variant of any $\Pi^1_2$ statement $\forall X \exists Y \varphi(X,Y)$ so long as one can provide a reasonable formulation of what it means for a set $Z$ to be consistent with a $Y$ such that $\varphi(X,Y)$.  For example, in the case of $\rwkl$, we think of a set $H$ as being consistent with a path through an infinite tree $T \subseteq \str$ if $H$ is homogeneous for $T$.  We are interested in analyzing the strengths of Ramsey-type variants of statements that are equivalent to $\wkl$ over $\rca$.  Several such statements have trivial Ramsey-type variants.  For example, $\rca$ proves that for every pair of injections $f,g \colon \Nb \imp \Nb$ with disjoint ranges, there is an infinite set $X$ consistent with being a separating set for the ranges of $f$ and $g$ because $\rca$ proves that there is an infinite subset of the range of $f$.  The obvious Ramsey-type variant of Lindenbaum's lemma (every consistent set of sentences has a consistent completion) is also easily seen to be provable in $\rca$.  For the remainder of this paper, we consider non-trivial Ramsey-type variants of the compactness theorem for propositional logic and of graph coloring theorems.  Many of these variants are equivalent to $\rwkl$, which we take as evidence that $\rwkl$ is robust.

\begin{definition}
A set $C$ of propositional formulas is \emph{finitely satisfiable} if every finite $C_0 \subseteq C$ is satisfiable (i.e., has a satisfying truth assignment).  We denote by $\sat$ the compactness theorem for propositional logic, which is the statement ``every finitely satisfiable set of propositional formulas is satisfiable.''
\end{definition}

It is well-known that $\sat$ is equivalent to $\wkl$ over $\rca$ (see \cite{Simpson2009}~Theorem IV.3.3).

If $C$ is a set of propositional formulas, then let $\atoms(C)$ denote the set of propositional atoms appearing in the formulas in $C$.  Strictly speaking, $\rca$ does not prove that $\atoms(C)$ exists for every set of propositional formulas $C$.  However, in $\rca$ we can rename the atoms appearing in a set of propositional formulas $C$ in such a way as to produce an equivalent set of propositional formulas $C'$ for which $\atoms(C')$ does exist.  Indeed, we may assume that $\atoms(C) = \Nb$ whenever $\atoms(C)$ is infinite.  Thus for ease of mind we always assume that $\atoms(C)$ exists as a set.

\begin{definition}
Let $C$ be a set of propositional formulas.  A set $H \subseteq \atoms(C)$ is \emph{homogeneous for $C$} if there is a $c \in \{\true, \false\}$ such that every finite $C_0 \subseteq C$ is satisfiable by a truth assignment $\nu$ such that $(\forall a \in H)(\nu(a) = c)$.
\end{definition}

As is typical, we identify $\true$ with $1$ and $\false$ with $0$.

\begin{definition}{\ }
\begin{itemize}
\item $\rsat$ is the statement ``for every finitely satisfiable set $C$ of propositional formulas with $\atoms(C)$ infinite, there is an infinite $H \subseteq \atoms(C)$ that is homogeneous for $C$.''

\item $\lrsat$ is the statement ``for every finitely satisfiable set $C$ of propositional formulas with $\atoms(C)$ infinite and every infinite $X \subseteq \atoms(C)$, there is an infinite $H \subseteq X$ that is homogeneous for $C$.''
\end{itemize}
\end{definition}

We also consider r.e.\ variants of $\rsat$ and $\lrsat$, denoted $\textup{r.e.-}\rsat$ and $\textup{r.e.-}\lrsat$, obtained by replacing the finitely satisfiable set of propositional formulas $C$ by a list of propositional formulas $(\varphi_i)_{i \in \Nb}$ such that $\{\varphi_i : i < n\}$ is satisfiable for every $n \in \Nb$.  This amounts to considering r.e.\ sets of propositional formulas instead of recursive sets of propositional formulas.  In this situation, we may still assume that $\atoms((\varphi_i)_{i \in \Nb})$ (the set of propositional atoms appearing in the $\varphi_i$'s) exists as a set.

We first show that $\rca \vdash \rsat \imp \rwkl$.  In fact, we show that the restriction of $\rsat$ to what we call \emph{$2$-branching} clauses implies $\rwkl$ over $\rca$.  This technical restriction is useful for the proof of Theorem~\ref{thm:rcolor3-rsat} in our analysis of Ramsey-type graph coloring principles.

Recall that a propositional formula $\ell$ is called a \emph{literal} if either $\ell = a$ or $\ell = \neg a$ for some propositional atom $a$ and that a \emph{clause} is a disjunction of literals.

\begin{definition}
Let $\{a_i : i \in \Nb\}$ be an infinite set of propositional atoms.  A set $C$ of clauses is called \emph{$2$-branching} if, for every clause $\ell_0 \orr \ell_1 \orr \cdots \orr \ell_{n-1} \in C$ and every $i < n$, the literal $\ell_i$ is either $a_i$ or $\neg a_i$.  $\rsat_{\textup{2-branching}}$ is $\rsat$ restricted to $2$-branching clauses.
\end{definition}

\begin{proposition}\label{prop:RWKL2BranchRSAT}
$\rca \vdash \rsat_{\textup{2-branching}} \imp \rwkl$.
\end{proposition}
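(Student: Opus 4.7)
The plan is to encode an infinite subtree $T\subseteq 2^{<\Nb}$ as a finitely satisfiable set of $2$-branching clauses whose homogeneous sets translate back to homogeneous sets for $T$. Fix atoms $a_0, a_1, \ldots$ (the trivial case $T = 2^{<\Nb}$ is handled separately, since then $H=\Nb$ is homogeneous). To each $\sigma \in 2^{<\Nb}\setminus T$ of length $n$, I would assign the clause $c_\sigma := \bigvee_{i<n} \ell_i^\sigma$, where $\ell_i^\sigma$ is $a_i$ when $\sigma(i)=0$ and $\neg a_i$ when $\sigma(i)=1$. In words, $c_\sigma$ forbids a truth assignment from agreeing with $\sigma$ on the first $n$ atoms. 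Let $C = \{c_\sigma : \sigma \notin T\}$; this set exists by $\Delta^0_1$ comprehension, is $2$-branching by construction, and has $\atoms(C)$ infinite because the extensions of any $\sigma \notin T$ are all outside $T$, producing clauses of unbounded length.

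The crucial observation, which does all of the combinatorial work, is that a truth assignment $\nu$ satisfies every clause in $C$ of length $\leq N$ precisely when the binary string $\nu(a_0)\cdots\nu(a_{N-1})$ lies in $T^N$. Indeed, any $\sigma \notin T$ with $|\sigma|\leq N$ must differ from this initial segment (as $T$ is closed under prefixes), so some $\sigma(i)$ is falsified and $c_\sigma$ is satisfied; conversely, if the segment itself fell outside $T$, then it would provide its own clause which $\nu$ would falsify. Finite satisfiability of $C$ then follows at once, because an infinite subtree of $2^{<\Nb}$ has strings of every length in $\rca$.

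Now apply $\rsat_{\textup{2-branching}}$ to $C$ to obtain an infinite $H'\subseteq \atoms(C)$ homogeneous for $C$ with some color $c$, and put $H = \{i : a_i \in H'\}$. For each $N$, feed the homogeneity hypothesis the finite subfamily of all clauses of length $\leq N$: the resulting satisfying assignment $\nu$ corresponds, via the observation above, to a string $s \in T^N$ with $s(i)=c$ for every $i \in H$ with $i < N$. This yields arbitrarily long strings of $T$ for which $H$ is homogeneous with color $c$, so $H$ witnesses $\rwkl$ for $T$.

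I do not anticipate a substantive obstacle; the only real design choice is the labelling of literals so that the $i$-th literal always mentions $a_i$, which is exactly what the $2$-branching format demands and what allows a single propositional atom to track each level of the tree.
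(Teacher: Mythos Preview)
Your proof is correct and follows essentially the same approach as the paper: encode the complement of $T$ as a $2$-branching clause set $C=\{\theta_\sigma:\sigma\notin T\}$ with $\ell_i=a_i$ when $\sigma(i)=0$ and $\ell_i=\neg a_i$ when $\sigma(i)=1$, so that truth assignments on $\{a_0,\dots,a_{N-1}\}$ satisfying all length-$\leq N$ clauses correspond exactly to strings in $T^N$, and then pull back a homogeneous set of atoms to a homogeneous set for $T$. Your explicit handling of the degenerate case $T=2^{<\Nb}$ (where $\atoms(C)$ fails to be infinite) is a detail the paper leaves implicit.
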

\begin{proof}
Let $A = \{a_i : i \in \Nb\}$ be a set of propositional atoms, and to each string $\sigma \in 2^{<\Nb}$ associate the clause $\theta_\sigma = \bigvee_{i<|\sigma|}\ell_i$, where $\ell_i = a_i$ if $\sigma(i) = 0$ and $\ell_i = \neg a_i$ if $\sigma(i) = 1$.  Let $T \subseteq 2^{<\Nb}$ be an infinite tree.  Let $C = \{\theta_\sigma : \sigma \notin T\}$, and observe that $C$ is $2$-branching.  We show that $C$ is finitely satisfiable.  Given $C_0 \subseteq C$ finite, choose $n$ large enough so that the atoms appearing in the clauses in $C_0$ are among $\{a_i : i < n\}$.  As $T$ is infinite, choose a $\tau \in T$ of length $n$.  Define a truth assignment $t \colon \{a_i : i < n\} \imp \{\true, \false\}$ by $t(a_i) = \tau(i)$.  Now, if $\theta$ is a clause in $C_0$, then $\theta = \theta_\sigma = \bigvee_{i<|\sigma|}\ell_i$ for some $\sigma \notin T$ with $|\sigma| < n$.  Thus there is an $i < n$ such that $\sigma(i) \neq \tau(i)$ (because $\tau \in T$ and $\sigma \notin T$), from which we see that $t(\ell_i)=\true$ and hence that $t(\theta_\sigma)=\true$.  Thus $t$ satisfies $C_0$.

By $\rsat_{\textup{2-branching}}$, let $H_0 \subseteq A$ and $c \in \{\true,\false\}$ be such that $H_0$ is homogeneous for $C$ with truth value $c$.  Let $H = \{i \in \Nb : a_i \in H_0\}$.  We show that $H$ is homogeneous for a path through $T$ with color $c$.  Given $n \in \Nb$, we want to find a $\tau \in T$ such that $|\tau| = n$ and $(\forall i < |\tau|)(i \in H \imp \tau(i) = c)$.  Thus let $t \colon \{a_i : i < n\} \imp \{\true, \false\}$ be a truth assignment satisfying $C_0 = \{\theta_\sigma : \sigma \notin T \andd |\sigma| = n\}$ such that $(\forall a \in \{a_i : i < n\} \cap H_0)(t(a) = c)$.  Let $\tau \in 2^n$ be defined by $\tau(i) = t(a_i)$ for all $i<n$.  Notice that $(\forall i < |\tau|)(i \in H \imp \tau(i) = c)$ and that $t(\theta_\tau) = \false$.  If $\tau \notin T$, then $\theta_\tau \in C_0$, contradicting that $t$ satisfies $C_0$.  Thus $\tau \in T$ as desired.
\end{proof}

\begin{proposition}\label{prop:lrwkl-rersat}
$\rca \vdash \lrwkl \imp \textup{r.e.-}\lrsat$.
\end{proposition}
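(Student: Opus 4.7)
The strategy is to encode the r.e.\ list of formulas as an infinite subtree of $2^{<\Nb}$, apply $\lrwkl$, and translate the resulting homogeneous set of positions back into a homogeneous set of atoms. Without loss of generality, assume that $\atoms((\varphi_i)_{i \in \Nb})$ is infinite (the finite case is trivial) and fix an enumeration $\{a_i : i \in \Nb\}$ of it, so that $X' = \{i \in \Nb : a_i \in X\}$ is an infinite subset of $\Nb$. For each $\sigma \in 2^{<\Nb}$, let $\nu_\sigma$ denote the truth assignment sending $a_i$ to $\sigma(i)$ for $i < |\sigma|$.

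Next I would define $T \subseteq 2^{<\Nb}$ by placing $\sigma$ in $T$ iff $\nu_\sigma$ satisfies every formula $\varphi_j$ with $j < |\sigma|$ and $\atoms(\varphi_j) \subseteq \{a_0, \ldots, a_{|\sigma|-1}\}$. This defining condition is $\Delta^0_1$ in the parameter $(\varphi_i)_{i \in \Nb}$, so $T$ exists in $\rca$. Closure under prefixes is routine: passing from $\sigma$ to a shorter $\tau$ only restricts which formulas must be checked, and these formulas use only atoms within $\tau$'s range, so $\nu_\tau$ still satisfies them. For infinitude at level $n$, the finite set $\{\varphi_j : j < n \andd \atoms(\varphi_j) \subseteq \{a_0, \ldots, a_{n-1}\}\}$ is a subset of $\{\varphi_j : j < n\}$, which is satisfiable by hypothesis, and any satisfying assignment restricts to a string of length $n$ in $T$.

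Now apply $\lrwkl$ to $T$ and $X'$ to obtain an infinite $H_0 \subseteq X'$ homogeneous for $T$. Since the witnessing subtree splits as the union over $c \in \{0,1\}$ of the prefix-closed sets $T_c = \{\sigma \in T : (\forall i \in H_0)(i < |\sigma| \imp \sigma(i) = c)\}$, a pigeonhole argument (provable in $\rca$) lets us fix a color $c$ for which $T_c$ is infinite. Set $H = \{a_i : i \in H_0\}$, an infinite subset of $X$. To verify $H$ is homogeneous for $(\varphi_i)_{i \in \Nb}$ with value $c$, given a finite $C_0 \subseteq \{\varphi_i : i \in \Nb\}$, choose $N$ so that $C_0 \subseteq \{\varphi_j : j < N\}$ and all atoms appearing in $C_0$ lie in $\{a_0, \ldots, a_{N-1}\}$, pick $\sigma \in T_c$ of length $n \geq N$, and extend $\nu_\sigma$ to a total assignment $\nu$ by defining $\nu(a_i) = c$ for those $i \geq n$ with $a_i \in H$ (and arbitrary values elsewhere). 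Since the atoms of $C_0$ lie in the range where $\nu$ agrees with $\nu_\sigma$, we get $\nu \models C_0$, and $\nu$ is constantly $c$ on $H$ by construction. There is no serious obstacle; the only points to watch are the pigeonhole step to fix the color and the use of the $X'$-parameter form of $\lrwkl$ so that $H$ lands inside the prescribed $X$.
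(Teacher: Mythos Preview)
Your proposal is correct and follows essentially the same approach as the paper: encode the list of formulas as a subtree of $2^{<\Nb}$ whose nodes are partial truth assignments not yet falsifying any formula with small enough index and atoms in range, apply $\lrwkl$ localized to the indices of $X$, and pull back the homogeneous set to atoms. Your tree is literally the same as the paper's (the paper phrases membership as ``no $\varphi_i$ with $i<|\sigma|$ evaluates to $\false$ under $\nu_\sigma$,'' treating formulas with out-of-range atoms as undefined), and your explicit pigeonhole step to fix the color $c$ is exactly what the paper leaves implicit in its phrase ``homogeneous for $T$ with color $c$.''
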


\begin{proof}
Let $(\varphi_i)_{i \in \Nb}$ be a list of propositional formulas over an infinite set of atoms $A$ such that $\{\varphi_i : i < n\}$ is satisfiable for every $n \in \Nb$, and let $X \subseteq A$ be infinite.  Let $(a_i)_{i \in \Nb}$ enumerate $A$.  For each $\sigma \in 2^{<\Nb}$, identify $\sigma$ with the truth assignment $\nu_\sigma$ on $\{a_i : i < |\sigma|\}$ given by $(\forall i < |\sigma|)(\nu_\sigma(a_i) = \true \biimp \sigma(i) = 1)$.  Let $T \subseteq 2^{<\Nb}$ be the tree
\begin{align*}
T = \{\sigma \in 2^{<\Nb} : \neg(\exists i < |\sigma|)(\nu_\sigma(\varphi_i) = \false)\},
\end{align*}
where $\nu_\sigma(\varphi_i)$ is the truth value assigned to $\varphi_i$ by $\nu_\sigma$ (we consider $\nu_\sigma(\varphi_i)$ to be undefined---hence not $\false$---if $\varphi_i$ contains an atom $a_m$ for an $m \geq |\sigma|$).  $T$ exists by $\Delta^0_1$ comprehension and is closed downward.  $T$ is infinite because for any $n \in \Nb$, any satisfying truth assignment of $\{\varphi_i : i < n\}$ restricted to $\{a_i : i<n\}$ yields a string in $T$ of length $n$.  Let $X_0 = \{i \in \Nb : a_i \in X\}$, and, by $\lrwkl$, let $H_0 \subseteq X_0$ and $c < 2$ be such that $H_0$ is infinite and homogeneous for $T$ with color $c$.  Let $H = \{a_i : i \in H_0\}$ and note that it is an infinite subset of $X$.  We show that, for every $n \in \Nb$, $\{\varphi_i : i < n\}$ can be satisfied by a truth assignment $\nu$ such that $(\forall a \in H)(\nu(a) = c)$.  Let $n \in \Nb$, and let $m$ be large enough so that $\atoms(\{\varphi_i : i < n\}) \subseteq \{a_i : i < m\}$.  Let $\sigma \in T$ be such that $|\sigma| = m$ and $H_0$ is homogeneous for $\sigma$ with color $c$.  Then $(\forall i < n)(\nu_\sigma(\varphi_i) = \true)$ because $\nu_\sigma(\varphi_i)$ is defined for all $i < n$ and $\nu_\sigma(\varphi_i) \neq \false$ for all $i < n$.  Thus $\nu_\sigma$ satisfies $\{\varphi_i : i < n\}$, and, because $H_0$ is homogeneous for $\sigma$ with color $c$, $(\forall a \in H)(\nu_\sigma(a) = c)$.
\end{proof}

\begin{theorem}
The following statements are equivalent over $\rca$:
\smallskip
\begin{itemize}
\begin{minipage}{0.3\linewidth}  
  \item[(i)] $\rwkl$
  \item[(ii)] $\rsat$
  \item[(iii)] $\lrsat$
\end{minipage}
\begin{minipage}{0.6\linewidth}
  \item[(iv)] $\textup{r.e.-}\rsat$
  \item[(v)] $\textup{r.e.-}\lrsat$.
\end{minipage}
\end{itemize}
\end{theorem}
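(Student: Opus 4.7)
The plan is to close a cycle of implications, relying almost entirely on results already established in the excerpt. Proposition~\ref{prop:RWKL2BranchRSAT} provides $\rsat \imp \rwkl$ (since $\rsat_{\textup{2-branching}}$ is a restriction of $\rsat$), Proposition~\ref{prop:lrwkl-rersat} provides $\lrwkl \imp \textup{r.e.-}\lrsat$, and Theorem~\ref{thm:lrwkl-rwkl} provides $\rwkl \biimp \lrwkl$. So the genuine content of the equivalence has already been proved; what remains is to verify the straightforward specialization implications among the five variants.

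I would first establish $(i) \imp (v)$ by chaining: from $\rwkl$ deduce $\lrwkl$ via Theorem~\ref{thm:lrwkl-rwkl}, and then apply Proposition~\ref{prop:lrwkl-rersat} to obtain $\textup{r.e.-}\lrsat$. Next, I would verify the specialization implications $(v) \imp (iv)$, $(v) \imp (iii)$, $(iv) \imp (ii)$, and $(iii) \imp (ii)$, each being routine. For $(v) \imp (iv)$, given a list $(\varphi_i)_{i \in \Nb}$ of propositional formulas whose finite initial segments are satisfiable, apply $\textup{r.e.-}\lrsat$ with $X = \atoms((\varphi_i)_{i \in \Nb})$. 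For $(v) \imp (iii)$ and $(iv) \imp (ii)$, given a finitely satisfiable set $C$ of propositional formulas (with $\atoms(C)$ infinite), use $\Delta^0_1$ comprehension in $\rca$ to enumerate $C$ as a list $(\varphi_i)_{i \in \Nb}$; every finite initial segment $\{\varphi_i : i < n\}$ is a finite subset of $C$ and hence satisfiable, so the r.e.\ principle applies and yields a homogeneous set for $C$. For $(iii) \imp (ii)$, take $X = \atoms(C)$. Finally, $(ii) \imp (i)$ is immediate from Proposition~\ref{prop:RWKL2BranchRSAT}.

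The main obstacle is essentially nonexistent: all the substantive work sits in Proposition~\ref{prop:RWKL2BranchRSAT}, Proposition~\ref{prop:lrwkl-rersat}, and Theorem~\ref{thm:lrwkl-rwkl}. The only care needed is the bookkeeping between ``recursive sets'' of formulas and ``r.e.\ lists'' of formulas in $\rca$ and the standing convention (noted before the definition of homogeneity for $\sat$-instances) that by renaming atoms we may always assume $\atoms$ exists as a set, so the specialization steps go through without fuss.
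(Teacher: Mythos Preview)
Your proposal is correct and follows essentially the same approach as the paper: establish $(i)\imp(v)$ via $\rwkl\imp\lrwkl$ (Lemma~\ref{lem:rwkl-lrwkl}/Theorem~\ref{thm:lrwkl-rwkl}) and Proposition~\ref{prop:lrwkl-rersat}, note that the specialization implications $(v)\imp(iii)\imp(ii)$ and $(v)\imp(iv)\imp(ii)$ are clear, and close the cycle with $(ii)\imp(i)$ via Proposition~\ref{prop:RWKL2BranchRSAT}. Your slightly more explicit treatment of the r.e.-to-recursive specializations is fine and matches what the paper dismisses as ``clear.''
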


\begin{proof}
Clearly $(v) \imp (iii) \imp (ii)$ and $(v) \imp (iv) \imp (ii)$, so it suffices to show the equivalence of $(i)$, $(ii)$, and $(v)$.  We have that $(i) \imp (v)$ by Proposition~\ref{prop:lrwkl-rersat} and Lemma~\ref{lem:rwkl-lrwkl}, that $(v) \imp (ii)$ is clear, and that $(ii) \imp (i)$ by Proposition~\ref{prop:RWKL2BranchRSAT}.
\end{proof}

\section{Ramsey-type graph coloring principles}\label{sec-RCOLOR}

Let $k \in \Nb$, and let $G = (V, E)$ be a graph.  A function $f \colon V \imp k$ is a \emph{$k$-coloring} of $G$ if $(\forall x,y \in V)((x,y) \in E \imp f(x) \neq f(y))$.  A graph is \emph{$k$-colorable} if it has a $k$-coloring, and a graph is \emph{locally $k$-colorable} if every finite subgraph is $k$-colorable.  A simple compactness argument proves that every locally $k$-colorable graph is $k$-colorable.  In the context of reverse mathematics, we have the following well-known equivalence.

\begin{theorem}[see~\cite{Hirst1990}]\label{thm-ColoringIsWKL}
For every $k \in \omega$ with $k \geq 2$, the following statements are equivalent over $\rca$:
\begin{itemize}
\item[(i)] $\wkl$
\item[(ii)] Every locally $k$-colorable graph is $k$-colorable.
\end{itemize}
\end{theorem}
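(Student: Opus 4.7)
I will establish the two implications separately, both for a fixed $k \geq 2$.

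For $(i) \Rightarrow (ii)$, I will run the standard compactness argument inside $\rca + \wkl$. Given a locally $k$-colorable graph $G = (V,E)$ (the case of finite $V$ being trivial), enumerate $V = \{v_0, v_1, \ldots\}$ and form the tree
\[
T = \{\sigma \in k^{<\Nb} : (\forall i, j < |\sigma|)(\{v_i, v_j\} \in E \imp \sigma(i) \neq \sigma(j))\}.
\]
This tree exists by $\Delta^0_1$-comprehension and is infinite, because each initial segment $\{v_0, \ldots, v_{n-1}\}$ induces a finite subgraph of $G$, which by hypothesis admits a $k$-coloring. Since $T$ is bounded by the constant function $k$, the bounded version of K\"onig's lemma --- which is equivalent to $\wkl$ over $\rca$ (see \cite{Simpson2009}~Lemma~IV.1.4) --- produces an infinite path through $T$, which is the required $k$-coloring of $G$.

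For $(ii) \Rightarrow (i)$, I will encode paths through a given infinite tree $S \subseteq 2^{<\Nb}$ as $k$-colorings of a suitably constructed graph $G$. The skeleton of $G$ is a reference $k$-clique $\{c_0, \ldots, c_{k-1}\}$ together with, for each $n \in \Nb$, a vertex $v_n$ joined to $c_2, c_3, \ldots, c_{k-1}$ (when $k \geq 3$). In any $k$-coloring $\chi$ of $G$, each $v_n$ must take one of the two colors $\chi(c_0), \chi(c_1)$, and we read this off as a bit $b_n \in \{0,1\}$. For each $\sigma \in 2^{<\Nb} \setminus S$, I attach a forbidding gadget linked to $v_0, \ldots, v_{|\sigma|-1}$ and to the reference clique in such a way that the gadget is $k$-colorable if and only if $b_i \neq \sigma(i)$ for some $i < |\sigma|$. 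Since $S$ is infinite, any finite subgraph of $G$ can be $k$-colored by choosing a string in $S$ of sufficient length, so $G$ is locally $k$-colorable. Applying $(ii)$ then yields a $k$-coloring $\chi$, and the bits $b_n$ extracted from $\chi$ define a path through $S$.

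The main technical obstacle is designing the forbidding gadgets so that they enforce the intended constraint while keeping $G$ locally $k$-colorable, and doing so uniformly in $\rca$. The case $k = 2$ is especially delicate, since the only direct constraint available is an edge (forcing different colors); here one encodes the path-through-$S$ condition via carefully placed odd cycles whose existence depends on the bits $b_n$ matching $\sigma$. Once these gadgets are in place --- along the lines of Hirst's treatment in \cite{Hirst1990} --- all the remaining verifications (infinitude of $T$ in the first direction, and local $k$-colorability together with correctness of the bit-extraction in the second) go through in $\rca$.
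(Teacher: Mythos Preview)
The paper does not give its own proof of this theorem; it is stated with a citation to Hirst~\cite{Hirst1990} and used as background. Your sketch follows the standard line of argument that Hirst gives: compactness via bounded K\"onig's lemma for $(i)\Rightarrow(ii)$, and a gadget-based encoding of an infinite binary tree into a locally $k$-colorable graph for $(ii)\Rightarrow(i)$. So at the level of approach there is nothing to contrast.

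That said, what you have written is a plan rather than a proof. The $(i)\Rightarrow(ii)$ direction is complete as stated. For $(ii)\Rightarrow(i)$ you correctly identify the architecture (reference clique, bit-carrying vertices $v_n$, and forbidding gadgets for each $\sigma\notin S$), but you do not actually construct the gadgets; you simply assert that suitable ones exist ``along the lines of Hirst's treatment.'' Since the entire content of the reversal lies in exhibiting, for each $\sigma$, a finite graph attached to $v_0,\dots,v_{|\sigma|-1}$ and the clique that is $k$-colorable precisely when the bit pattern disagrees with $\sigma$ somewhere, and since you yourself flag the $k=2$ case as delicate, this is a genuine gap if the goal is a self-contained proof. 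If the goal is only to indicate why the result holds and point to the literature, then your write-up is adequate and matches what the paper does.
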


In light of Theorem~\ref{thm-ColoringIsWKL}, we define Ramsey-type analogs of graph coloring principles and compare them to Ramsey-type weak K\"onig's lemma.

\index{Subsystems!$\rcolor_k$}
\index{Subsystems!$\lrcolor_k$}
\begin{definition}{\ }
\begin{itemize}
\item Let $G = (V,E)$ be a graph.  A set $H \subseteq V$ is {\itshape $k$-homogeneous for $G$} if every finite $V_0 \subseteq V$ induces a subgraph that is $k$-colorable by a coloring that colors every vertex in $V_0 \cap H$ color $0$.  We often write {\itshape homogeneous} for $k$-homogeneous when the $k$ is clear from context.

\item $\rcolor_k$ is the statement  ``for every infinite, locally $k$-colorable graph $G = (V,E)$, there is an infinite $H \subseteq V$ that is $k$-homogeneous for $G$.''

\item $\lrcolor_k$ is the statement ``for every infinite, locally $k$-colorable graph $G = (V,E)$ and every infinite $X \subseteq V$, there is an infinite $H \subseteq X$ that is $k$-homogeneous for $G$.''
\end{itemize}
\end{definition}

The goal of this section is to obtain the analog of Theorem~\ref{thm-ColoringIsWKL} with $\rwkl$ in place of $\wkl$ and with $\rcolor_k$ in place of the statement ``every locally $k$-colorable graph is $k$-colorable.''  We are able to obtain this analog for all standard $k \geq 3$ instead of all standard $k \geq 2$.  The case $k = 2$ remains open.  Showing the forward direction, that $\rca \vdash \rwkl \rightarrow \rcolor_k$ (indeed, that $\rca \vdash \rwkl \rightarrow \lrcolor_k$), is straightforward.

\begin{lemma}\label{RWKLprovesLRCOLOR}
For every $k \in \omega$, $\rca \vdash \rwkl \rightarrow \lrcolor_k$.
\end{lemma}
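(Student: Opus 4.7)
The plan is to encode the $k$-colorings of finite initial segments of $G$ as a tree in $k^{<\Nb}$ and then invoke $\lrwkl_k$, which is equivalent to $\rwkl$ by Theorem~\ref{thm:lrwkl-rwkl} (the case $k=1$ is trivial since local $1$-colorability forces $E=\emptyset$, making any infinite $H \subseteq X$ automatically $1$-homogeneous). So fix $k \geq 2$, an infinite locally $k$-colorable graph $G = (V,E)$, and an infinite $X \subseteq V$.

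First, fix an enumeration $(v_i)_{i \in \Nb}$ of $V$ in increasing order, and define
\begin{align*}
T = \{\sigma \in k^{<\Nb} : (\forall i,j < |\sigma|)((v_i, v_j) \in E \imp \sigma(i) \neq \sigma(j))\}.
\end{align*}
This $T$ exists by $\Delta^0_1$ comprehension and is closed under initial segments. It is infinite because local $k$-colorability of $G$ provides, for each $n \in \Nb$, a $k$-coloring of the subgraph induced by $\{v_0, \dots, v_{n-1}\}$, which yields a string of length $n$ in $T$. Let $X_0 = \{i \in \Nb : v_i \in X\}$, which exists by $\Delta^0_1$ comprehension and is infinite since $X$ is.

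Next, apply $\lrwkl_k$ to $T$ and $X_0$ to obtain an infinite $H_0 \subseteq X_0$ and a color $c < k$ such that $H_0$ is homogeneous for $T$ with color $c$. Put $H = \{v_i : i \in H_0\}$, an infinite subset of $X$. To see that $H$ is $k$-homogeneous for $G$, let $V_0 \subseteq V$ be finite and choose $n$ with $V_0 \subseteq \{v_0, \dots, v_{n-1}\}$. By homogeneity of $H_0$ for $T$, there is a $\sigma \in T$ of length $n$ such that $\sigma(i) = c$ for every $i \in H_0$ with $i < n$. The assignment $v_i \mapsto \sigma(i)$ is a $k$-coloring of $\{v_0, \dots, v_{n-1}\}$, and hence of the subgraph induced by $V_0$, that colors every vertex of $V_0 \cap H$ with the single color $c$. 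Composing with the transposition of the colors $0$ and $c$ yields a $k$-coloring of the subgraph induced by $V_0$ that colors every vertex of $V_0 \cap H$ with color $0$, as required.

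There is no substantive obstacle: the construction is direct and all the closure principles needed (the comprehension for $T$ and $X_0$, the enumeration of $V$) are available in $\rca$. The only mildly subtle point is that $\lrwkl_k$ supplies homogeneity for an arbitrary color $c$ rather than for color $0$ specifically, which is handled by a trivial color permutation at the end.
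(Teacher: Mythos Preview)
Your proof is correct and follows essentially the same approach as the paper's: encode partial $k$-colorings as a tree $T \subseteq k^{<\Nb}$, apply $\lrwkl_k$ (via Theorem~\ref{thm:lrwkl-rwkl}) localized to $X_0 = \{i : v_i \in X\}$, and swap the resulting color $c$ with $0$. The only cosmetic difference is that you explicitly dispose of the trivial $k \leq 1$ case before invoking Theorem~\ref{thm:lrwkl-rwkl}, which the paper leaves implicit.
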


\begin{proof}
Let $G = (V,E)$ be an infinite graph such that every finite $V_0 \subseteq V$ induces a $k$-colorable subgraph, and let $X \subseteq V$ be infinite.  Enumerate $V$ as $(v_i)_{i \in \Nb}$, and let $T \subseteq k^{<\Nb}$ be the tree
\begin{align*}
T = \{\sigma \in k^{<\Nb} : (\forall i,j < |\sigma|)((v_i,v_j) \in E \imp \sigma(i) \neq \sigma(j))\}.
\end{align*}
$T$ exists by $\Delta^0_1$ comprehension and is closed downward.  $T$ is infinite because for any $n \in \Nb$, any $k$-coloring of the subgraph induced by $\{v_i : i < n\}$ corresponds to a string in the tree of length $n$.  Let $X_0 = \{i \in \Nb : v_i \in X\}$, and apply $\lrwkl_k$ (which follows from $\rca + \rwkl$ by Theorem~\ref{thm:lrwkl-rwkl}) to $T$ and $X_0$ to get an infinite set $H_0 \subseteq X_0$ and a color $c < k$ such that $H_0$ is homogeneous for a path through $T$ with color $c$.  Let $H = \{v_i : i \in H_0\}$.  We show that every finite $V_0 \subseteq V$ induces a subgraph that is $k$-colorable by a coloring that colors every $v \in V_0 \cap H$ color $0$.  Let $V_0 \subseteq V$ be finite, let $n = \max\{i+1 : v_i \in V_0\}$, and let $\sigma \in T$ be such that $|\sigma| = n$ and such that $H_0$ is homogeneous for $\sigma$ with color $c$.  Then the coloring of $V_0$ given by $v_i \mapsto \sigma(i)$ is a $k$-coloring of $V_0$ that colors the elements of $V_0 \cap H$ color $c$.  Swapping colors $0$ and $c$ thus gives a $k$-coloring of $V_0$ that colors the elements of $V_0 \cap H$ color $0$.
\end{proof}

We now prove that $\rca \vdash \rcolor_3 \imp \rwkl$ (Theorem~\ref{thm:rcolor3-rsat} below).  Our proof factors through the Ramsey-type satisfiability principles and is a rather elaborate exercise in circuit design.  The plan is to prove that $\rca \vdash \rcolor_3 \imp \rsat_{\textup{2-branching}}$, then appeal to Proposition~\ref{prop:RWKL2BranchRSAT}.  Given a $2$-branching set of clauses $C$, we compute a locally $3$-colorable graph $G$ such that every set homogeneous for $G$ computes a set that is homogeneous for $C$.  $G$ is built by connecting $\emph{widgets}$, which are finite graphs whose colorings have desirable properties.  A widget $W(\vec v)$ has distinguished vertices $\vec v$ through which we connect the widget to the larger graph.  These distinguished vertices can also be regarded, in a sense, as the inputs and outputs of the widget.

In an $\rcolor_3$ instance built out of widgets according to an $\rsat_{\textup{2-branching}}$ instance, some of the vertices code literals so that the colorings of these coding vertices code truth assignments of the corresponding literals in such a way that a homogeneous set for the $\rsat_{\textup{2-branching}}$ instance can be decoded from a homogeneous set for the graph that contains only coding vertices.  However, we have no control over what vertices appear in an arbitrary homogeneous set.  Therefore, we must build our graph so that the color of \emph{every} vertex gives information about the color of \emph{some} coding vertex.

When we introduce a widget, we prove a lemma concerning the three key aspects of the widget's operation:  soundness, completeness, and reversibility.  By soundness, we mean conditions on the $3$-colorings of the widget, which we think of as input-output requirements for the widget.  By completeness, we mean that the widget is indeed $3$-colorable and, moreover, that $3$-colorings of certain sub-widgets extend to $3$-colorings of the whole widget.  By reversibility, we mean that the colors of some vertices may be deduced from the colors of other vertices.

To aid the analysis of our widgets, we introduce a notation for the property that a coloring colors two vertices the same color.

\begin{notation}
Let $G = (V,E)$ be a graph, let $a, b \in V$, and let $\nu \colon V \imp k$ be a $k$-coloring of $G$.  We write $a =_\nu b$ if $\nu(a) = \nu(b)$.
\end{notation}

The graph $G$ that we build from our widgets has three distinguished vertices, $0$, $1$, and $2$, connected as a triangle.  The intention of these vertices is to code truth values.  If $v$ is a vertex coding a literal $\ell$, then $(v,2)$ is an edge in $G$, and, for a $3$-coloring $\nu$, we interpret $v =_\nu 0$ as $\ell$ is false and $v =_\nu 1$ as $\ell$ is true.  Our widgets often include vertices $0$, $1$, and $2$.

\begin{widget}
$R_{\substack{x \mapsto y \\ y \mapsto z}}(a,u)$ is the following widget.
\begin{center}
\begin{tikzpicture}[x=1cm, y=1cm, node/.style={circle, draw, minimum size=2em}, widget/.style={rectangle, draw, minimum size=2em}]
	
	\node[node] (x) at (0, 0) {$x$};
	\node[node] (y) at (1, 1) {$y$};
	\node[node] (z) at (0, 2) {$z$};
	
	\node[node] (v) at (3, 1) {$v$};
	\node[node] (a) at (4, 2) {$a$};
	\node[node] (u) at (4, 0) {$u$};
		
	\draw (x) -- (y) -- (z) -- (x);	
	\draw (a) -- (u) -- (v) -- (a);

	\draw (x) -- (u);
	\draw (y) -- (v);
	\draw (z) -- (a);
		
\end{tikzpicture}
\end{center}
\end{widget}

\begin{lemma}\label{lem:RWidget}{\ }
\begin{itemize}
\item[(i)]  Let $\nu$ be a $3$-coloring of $R_{\substack{x \mapsto y \\ y \mapsto z}}(a,u)$.  If $a =_\nu x$ then $u =_\nu y$, and if $a =_\nu y$ then $u =_\nu z$.

\item[(ii)] Every $3$-coloring of the subgraph of $R_{\substack{x \mapsto y \\ y \mapsto z}}(a,u)$ induced by $\{x,y,z,a\}$ can be extended to a $3$-coloring of $R_{\substack{x \mapsto y \\ y \mapsto z}}(a,u)$.

\item[(iii)] In every $3$-coloring of $R_{\substack{x \mapsto y \\ y \mapsto z}}(a,u)$, the color of each vertex in $\{u, v\}$ determines the color of $a$.
\end{itemize}
\end{lemma}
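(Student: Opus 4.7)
The plan is a direct finite case analysis on the two triangles appearing in the widget. The key structural observation is that in any $3$-coloring $\nu$, both of the triangles $\{x,y,z\}$ and $\{a,u,v\}$ are forced to receive all three colors, i.e.\ $\{\nu(x),\nu(y),\nu(z)\}=\{\nu(a),\nu(u),\nu(v)\}=\{0,1,2\}$. The cross edge $(z,a)$ then immediately gives $\nu(a)\in\{\nu(x),\nu(y)\}$, which is precisely the dichotomy that appears in part~(i).

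For part~(i), in the case $a=_\nu x$ I will use the cross edge $(y,v)$ together with the triangle edge $(v,a)$: these eliminate the colors $\nu(y)$ and $\nu(x)=\nu(a)$ from $\nu(v)$, forcing $v=_\nu z$, after which $u=_\nu y$ is the unique remaining color in the triangle $\{a,u,v\}$. The case $a=_\nu y$ is symmetric: the cross edge $(x,u)$ and the triangle edge $(a,u)$ exclude $\nu(x)$ and $\nu(y)=\nu(a)$ from $\nu(u)$, so $u=_\nu z$.

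For part~(ii), I will read the analysis in part~(i) backwards. Any $3$-coloring of the induced subgraph on $\{x,y,z,a\}$ already satisfies $\nu(a)\in\{\nu(x),\nu(y)\}$ because of the edge $(z,a)$, so in each of the two cases I simply assign $\nu(u)$ and $\nu(v)$ the unique values dictated by part~(i) and verify mechanically that the three cross edges and the triangle $\{a,u,v\}$ are respected; this is a purely routine check.

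For part~(iii), I will invert the dictionary of part~(i): relative to the colors of the reference triangle $\{x,y,z\}$, the value $u=_\nu y$ forces $a=_\nu x$ and $u=_\nu z$ forces $a=_\nu y$, and symmetrically $v=_\nu z$ forces $a=_\nu x$ while $v=_\nu x$ forces $a=_\nu y$. The whole lemma is essentially a $2\times 2$ bookkeeping exercise, so there is no real obstacle; the only point one must be careful about is that the determination in part~(iii) is relative to the colors of the reference triangle $\{x,y,z\}$, since without that reference $\nu(a)$ cannot be recovered from $\nu(u)$ or $\nu(v)$ alone. This is exactly why the widget will behave as an ``input--output'' gadget in the larger circuit construction.
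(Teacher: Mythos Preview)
Your proposal is correct and follows essentially the same approach as the paper. The paper's proof simply lists the two possible $3$-colorings up to permutation (namely $a=_\nu x,\ v=_\nu z,\ u=_\nu y$ and $a=_\nu y,\ v=_\nu x,\ u=_\nu z$) and reads off all three items from that list; your argument derives exactly these two colorings by the same edge-by-edge elimination and then inverts the resulting dictionary, so the content is identical.
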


\begin{proof}
The lemma follows from examining the two possible (up to permutations of the colors) $3$-colorings of $R_{\substack{x \mapsto y \\ y \mapsto z}}(a,u)$:
\begin{align}
a &=_\nu x & v &=_\nu z & u &=_\nu y\\
a &=_\nu y & v &=_\nu x & u &=_\nu z.
\end{align}
We see $(i)$ immediately.  For $(ii)$, if $a =_\nu x$, then color the widget according to the first coloring; and if $a =_\nu y$, then color the widget according to the second coloring.  For $(iii)$, if $u =_\nu y$ or $v =_\nu z$, then $a =_\nu x$; and if $u =_\nu z$ or $v =_\nu x$, then $a =_\nu y$.
\end{proof}

The intention is that, in $R_{\substack{x \mapsto y \\ y \mapsto z}}(a,u)$, the vertices $x$, $y$, and $z$ are some permutation of the vertices $0$, $1$, and $2$.  For example, $R_{\substack{0 \mapsto 1 \\ 1 \mapsto 2}}(a,u)$ is the instance of this widget where $x=0$, $y=1$, and $z=2$.  The notation `$R_{\substack{0 \mapsto 1 \\ 1 \mapsto 2}}(a,u)$' is evocative of Lemma~\ref{lem:RWidget}~$(i)$.  Thinking of $a$ as the widget's input and of $u$ as the widget's output, Lemma~\ref{lem:RWidget}~$(i)$ says that the widget maps $0$ to $1$ and maps $1$ to $2$.

\begin{widget}
$U_{x,y,z}(\ell,b,u)$ is the following widget.
\begin{center}
\begin{tikzpicture}[x=1cm, y=1cm, node/.style={circle, draw, minimum size=2em}, widget/.style={rectangle, draw, minimum size=2em}]
	
	\node[node] (x) at (-1, -1) {$x$};
	\node[node] (y) at (1, -1) {$y$};
	\node[node] (z) at (0, -0) {$z$};
	
	\node[node] (l) at (1, 1) {$\ell$};
	\node[node] (lb) at (-1,1) {$\bar\ell$};
	\node[node] (b) at (4, 1) {$b$};
	
	\node[widget] (R) at (1, 2) {$R_{\substack{x \mapsto y \\ y \mapsto z}}(\ell, r)$};	
	\node[node] (r) at (1, 3) {$r$};
	\node[node] (d) at (4, 2) {$d$};
	\node[node] (u) at (0, 4) {$u$};
		
	\draw (x) -- (y) -- (z) -- (x);
	\draw (z) -- (l) -- (lb) -- (z);
	\draw (lb) -- (u);
	\draw (l) -- (R) -- (r) -- (u);
	\draw (l) -- (d) -- (u);
	\draw (b) -- (d);
	
\end{tikzpicture}
\end{center}
\end{widget}

In the diagram above, the box labeled `$R_{\substack{x \mapsto y \\ y \mapsto z}}(\ell, r)$' represents an $R_{\substack{x \mapsto y \\ y \mapsto z}}(\ell, r)$ sub-widget.  The vertices $\ell$ and $r$ are the same as those appearing inside $R_{\substack{x \mapsto y \\ y \mapsto z}}(\ell, r)$.  They have been displayed to show how they connect to the rest of the $U_{x,y,z}(\ell,b,u)$ widget.  The vertices $x$, $y$, and $z$ are also the same as the corresponding vertices appearing inside $R_{\substack{x \mapsto y \\ y \mapsto z}}(\ell, r)$, and some of the edges incident to them (for example, the edge $(x, r)$) have been omitted to improve legibility.

The properties of $U_{x,y,z}(\ell,b,u)$ highlighted by the next lemmas may seem ill-motivated at first.  We explain their significance after the proofs.

\begin{lemma}\label{lem:U3Widget}{\ }
\begin{itemize}
\item[(i)]  Every $3$-coloring $\nu$ of the subgraph of $U_{x,y,z}(\ell,b,u)$ induced by $\{x,y,z,\ell,b\}$ can be extended to a $3$-coloring of $U_{x,y,z}(\ell,b,u)$.

\item[(ii)]  If $\nu$ is a $3$-coloring of $U_{x,y,z}(\ell,b,u)$ in which $\ell =_\nu x$ and $b = _\nu y$, then $u =_\nu x$.

\item[(iii)]  Every $3$-coloring $\nu$ of the subgraph of $U_{x,y,z}(\ell,b,u)$ induced by $\{x,y,z,\ell,b\}$ in which $\ell =_\nu x$ and $b \neq_\nu y$ can be extended to a $3$-coloring of $U_{x,y,z}(\ell,b,u)$ in which $u =_\nu z$.

\item[(iv)]  Every $3$-coloring $\nu$ of the subgraph of $U_{x,y,z}(\ell,b,u)$ induced by $\{x,y,z,\ell,b\}$ in which $\ell =_\nu y$ can be extended to a $3$-coloring of $U_{x,y,z}(\ell,b,u)$ in which $u =_\nu y$.
\end{itemize}
\end{lemma}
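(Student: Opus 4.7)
The plan is a straightforward case analysis driven by the triangles built into the widget. Since $x,y,z$ form a triangle, any $3$-coloring $\nu$ assigns them three distinct colors, which I will refer to by name. Since $\ell,\bar\ell,z$ also form a triangle, $\{\nu(\ell),\nu(\bar\ell)\} = \{\nu(x),\nu(y)\}$, so under $\nu$ either $\ell =_\nu x$ and $\bar\ell =_\nu y$, or $\ell =_\nu y$ and $\bar\ell =_\nu x$. In either case, Lemma~\ref{lem:RWidget}(i) applied inside the sub-widget $R_{\substack{x \mapsto y \\ y \mapsto z}}(\ell,r)$ fixes $\nu(r)$: $r =_\nu y$ in the first case and $r =_\nu z$ in the second. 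The remaining freedom in $\nu$ is only the colors of $u$ and $d$ (and $v$ inside the $R$-widget, which is handled by Lemma~\ref{lem:RWidget}(ii)), constrained by the edges $(\bar\ell,u),(r,u),(d,u),(\ell,d),(b,d)$.

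For item (iv), when $\ell =_\nu y$, the neighbors $\bar\ell$ and $r$ of $u$ already carry colors $x$ and $z$ respectively, so $u$ is forced to satisfy $u =_\nu y$. Then $d$ is adjacent to $\ell$ (color $y$), $u$ (color $y$), and $b$, so $d$ must take a color from $\{x\text{-color},z\text{-color}\}\setminus\{\nu(b)\}$, which is non-empty. This choice and Lemma~\ref{lem:RWidget}(ii) give the required extension with $u =_\nu y$.

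For items (ii) and (iii), assume $\ell =_\nu x$, so $\bar\ell =_\nu y$ and $r =_\nu y$; thus $u \in \{\nu(x),\nu(z)\}$ and $d\in\{\nu(y),\nu(z)\}$, with $d\neq_\nu u$ and $d\neq_\nu b$. If $b =_\nu y$ (case (ii)), then $d$ cannot take color $y$, hence $d =_\nu z$, which in turn forces $u =_\nu x$; this proves (ii). If $b\neq_\nu y$ (case (iii)), I would assign $d =_\nu y$ and $u =_\nu z$, which is consistent with every incident edge, and apply Lemma~\ref{lem:RWidget}(ii) to finish coloring the $R$-sub-widget.

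Finally, item (i) follows by combining the three cases above: given any $3$-coloring of the induced subgraph on $\{x,y,z,\ell,b\}$, either $\ell =_\nu y$ (extend by (iv)), or $\ell =_\nu x$ and $b =_\nu y$ (extend by (ii)), or $\ell =_\nu x$ and $b\neq_\nu y$ (extend by (iii)). There is no real obstacle in the argument: the only subtlety is ensuring that the induced coloring on the $R$-sub-widget's boundary $\{x,y,z,\ell\}$ is among those extendable by Lemma~\ref{lem:RWidget}(ii), which is automatic since $\ell\neq_\nu z$ holds in every case.
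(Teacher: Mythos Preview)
Your proof is correct and follows essentially the same approach as the paper: a case analysis on $\nu(\ell)$ (and then on $\nu(b)$), using the triangle constraints and Lemma~\ref{lem:RWidget} to pin down $\bar\ell$, $r$, $d$, and $u$.  The paper simply lists the six explicit colorings, whereas you derive the same information by reasoning about forced values; in particular your observation that $u$ is actually \emph{forced} to equal $y$ whenever $\ell =_\nu y$ slightly strengthens (iv), and your handling of (i) by reducing to the three subcases (ii), (iii), (iv) is a clean way to organize the same verification.
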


\begin{proof}
For $(i)$, let $\nu$ be a $3$-coloring of the subgraph of $U_{x,y,z}(\ell,b,u)$ induced by $\{x,y,z,\ell,b\}$.  
\begin{itemize}
\item If $\ell =_\nu x$ and $b =_\nu x$, then color the widget so that $\bar\ell =_\nu y$, $r =_\nu y$, $d =_\nu y$, and $u =_\nu z$.

\item If $\ell =_\nu x$ and $b =_\nu y$, then color the widget so that $\bar\ell =_\nu y$, $r =_\nu y$, $d =_\nu z$, and $u =_\nu x$.

\item If $\ell =_\nu x$ and $b =_\nu z$, then color the widget so that $\bar\ell =_\nu y$, $r =_\nu y$, $d =_\nu y$, and $u =_\nu z$.

\item If $\ell =_\nu y$ and $b =_\nu x$, then color the widget so that $\bar\ell =_\nu x$, $r =_\nu z$, $d =_\nu z$, and $u =_\nu y$.

\item If $\ell =_\nu y$ and $b =_\nu y$, then color the widget so that $\bar\ell =_\nu x$, $r =_\nu z$, $d =_\nu x$, and $u =_\nu y$.

\item If $\ell =_\nu y$ and $b =_\nu z$, then color the widget so that $\bar\ell =_\nu x$, $r =_\nu z$, $d =_\nu x$, and $u =_\nu y$.
\end{itemize}
In each of the above cases, the sub-widget $R_{\substack{x \mapsto y \\ y \mapsto z}}(\ell, r)$ is colored according to Lemma~\ref{lem:RWidget}.

For $(ii)$, let $\nu$ be a $3$-coloring of $U_{x,y,z}(\ell,b,u)$ in which $\ell =_\nu x$ and $b =_\nu y$.  Then it must be that $\bar\ell =_\nu y$ and $d =_\nu z$, and therefore it must be that $u =_\nu x$.

Item $(iii)$ can be seen by inspecting the first and third colorings in the proof of $(i)$.  

Item $(iv)$ can be seen by inspecting the last three colorings in the proof of $(i)$.
\end{proof}

\begin{lemma}\label{lem:U3WidgetDecode}
Let $\nu$ be a $3$-coloring of $U_{x,y,z}(\ell,b,u)$.  If $w$ is $\bar\ell$, $u$, or any vertex appearing in the $R_{\substack{x \mapsto y \\ y \mapsto z}}(\ell, r)$ sub-widget that is not $x$, $y$, or $z$, then the color of $w$ determines the color of $\ell$.  Moreover,
\begin{itemize}
\item if $d =_\nu x$, then $\ell =_\nu y$;
\item if $d =_\nu y$, then $\ell =_\nu x$;
\item if $d =_\nu z$, then $b \neq_\nu z$.
\end{itemize}
\end{lemma}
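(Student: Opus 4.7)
The plan is to read off each claim directly from the edge constraints of the widget, invoking Lemma~\ref{lem:RWidget}~(iii) as a black box whenever the vertex in question lies inside the sub-widget $R_{\substack{x \mapsto y \\ y \mapsto z}}(\ell,r)$. Throughout, I exploit that the triangle $x,y,z$ fixes the three color classes, so `determining the color of $\ell$' means deciding between $\nu(\ell)=\nu(x)$ and $\nu(\ell)=\nu(y)$ (the only two options allowed by the triangle $\ell,\bar\ell,z$).

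First I would handle $\bar\ell$: since $\ell,\bar\ell,z$ form a triangle, $\{\nu(\ell),\nu(\bar\ell)\}=\{\nu(x),\nu(y)\}$, so $\nu(\bar\ell)$ pins down $\nu(\ell)$ as the complementary element of $\{\nu(x),\nu(y)\}$. The vertices of $R_{\substack{x \mapsto y \\ y \mapsto z}}(\ell,r)$ other than $x,y,z$ are exactly $\ell$, $r$, and the inner third-triangle vertex $v$; the vertex $\ell$ determines itself trivially, and by Lemma~\ref{lem:RWidget}~(iii) each of $\nu(r)$ and $\nu(v)$ determines $\nu(\ell)$.

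Next, for $u$ (whose neighbors in the widget are $\bar\ell$, $r$, and $d$), I would split on the color of $\ell$. If $\ell=_\nu y$, then $\bar\ell=_\nu x$ and, by Lemma~\ref{lem:RWidget}~(i), $r=_\nu z$; the only remaining color available to $u$ is $\nu(y)$, so $u=_\nu y$. If instead $\ell=_\nu x$, then $\bar\ell=_\nu y$ and $r=_\nu y$, which leaves $u\in\{\nu(x),\nu(z)\}$. Hence $u=_\nu y$ precisely when $\ell=_\nu y$, and $u\in\{\nu(x),\nu(z)\}$ precisely when $\ell=_\nu x$, so $\nu(u)$ determines $\nu(\ell)$.

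For the three remaining claims about $d$, I would just use the edges $(\ell,d)$ and $(b,d)$. Because $\nu(\ell)\in\{\nu(x),\nu(y)\}$, the edge $(\ell,d)$ gives $d=_\nu x\Rightarrow \ell\neq_\nu x \Rightarrow \ell=_\nu y$, and symmetrically $d=_\nu y\Rightarrow \ell=_\nu x$; and $d=_\nu z\Rightarrow b\neq_\nu z$ follows immediately from the edge $(b,d)$. The lemma is in essence a bookkeeping check that the widget's coloring constraints propagate information about $\nu(\ell)$ to every non-boundary vertex, and since the heavy lifting has already been done by Lemma~\ref{lem:RWidget}, I do not expect any serious obstacle.
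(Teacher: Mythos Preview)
Your proposal is correct and follows essentially the same approach as the paper: handle $\bar\ell$ via the triangle $\ell,\bar\ell,z$, invoke Lemma~\ref{lem:RWidget}~(iii) for the sub-widget vertices, determine $\nu(\ell)$ from $\nu(u)$ by computing $\nu(\bar\ell)$ and $\nu(r)$ in each case, and read off the $d$ claims from the edges $(\ell,d)$ and $(b,d)$. The only cosmetic difference is that the paper phrases the $u$ case contrapositively (e.g., ``if $u=_\nu x$ or $u=_\nu z$ it cannot be that $\ell=_\nu y$''), whereas you argue the forward direction and then invert; the content is identical.
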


\begin{proof}
Let $\nu$ be a $3$-coloring of $U_{x,y,z}(\ell,b,u)$.  It is easy to see that if $\bar\ell =_\nu x$, then $\ell =_\nu y$ and that if $\bar\ell =_\nu y$, then $\ell =_\nu x$.  If $w$ is a vertex in $R_{\substack{x \mapsto y \\ y \mapsto z}}(\ell, r)$ that is not $x$, $y$, or $z$, then the color of $w$ determines the color of $\ell$ by Lemma~\ref{lem:RWidget}~$(iii)$.  For $u$, if $u =_\nu x$ or $u =_\nu z$ it cannot be that $\ell =_\nu y$ because then $\bar\ell =_\nu x$ and, by Lemma~\ref{lem:RWidget}~$(i)$, $r =_\nu z$.  On the other hand, if $u =_\nu y$, it cannot be that $\ell =_\nu x$ because then $\bar\ell =_\nu y$.  Thus if $u =_\nu x$ or $u =_\nu z$, then $\ell =_\nu x$; and if $u =_\nu y$, then $\ell =_\nu y$.  It is easy to see that if $d =_\nu x$ then $\ell =_\nu y$, that if $d =_\nu y$ then $\ell =_\nu x$, and that if $d =_\nu z$ then $b \neq_\nu z$ because $\ell$ and $b$ are neighbors of $d$.
\end{proof}

Consider a clause $\ell_0 \orr \ell_1 \orr \cdots \orr \ell_{n-1}$.  The idea is to code truth assignments that satisfy the clause as $3$-colorings of a graph constructed by chaining together widgets of the form $U_{x,y,z}(\ell_i,b,u)$.  Let $\nu$ be a $3$-coloring of $U_{x,y,z}(\ell_i,b,u)$.  The color of the vertex $\ell_i$ represents the truth value of the literal $\ell_i$:  $\ell_i =_\nu x$ is interpreted as $\ell_i$ is false, and $\ell_i =_\nu y$ is interpreted as $\ell_i$ is true.  The color of the vertex $b$ represents the truth value of $\ell_0 \orr \ell_1 \orr \cdots \orr \ell_{i-1}$ as well as the truth value of the literal $\ell_{i-1}$:  $b =_\nu x$ is interpreted as $\ell_0 \orr \ell_1 \orr \cdots \orr \ell_{i-1}$ is true but $\ell_{i-1}$ is false; $b =_\nu y$ is interpreted as $\ell_0 \orr \ell_1 \orr \cdots \orr \ell_{i-1}$ is false (and hence also as $\ell_{i-1}$ is false); and $b =_\nu z$ is interpreted as $\ell_{i-1}$ is true (and hence also as $\ell_0 \orr \ell_1 \orr \cdots \orr \ell_{i-1}$ is true).  Similarly, the color of the vertex $u$ represents the truth value of $\ell_0 \orr \ell_1 \orr \cdots \orr \ell_i$ as well as the truth value of the literal $\ell_i$.  However, the meanings of the colors are permuted:  $u =_\nu x$ is interpreted as $\ell_0 \orr \ell_1 \orr \cdots \orr \ell_i$ is false (and hence also as $\ell_i$ is false); $u =_\nu y$ is interpreted as $\ell_i$ is true (and hence also as $\ell_0 \orr \ell_1 \orr \cdots \orr \ell_i$ is true); and $u =_\nu z$ is interpreted as $\ell_0 \orr \ell_1 \orr \cdots \orr \ell_i$ is true but $\ell_i$ is false.  Lemma~\ref{lem:U3Widget} tells us that $U_{x,y,z}(\ell_i,b,u)$ properly implements this coding scheme.  Lemma~\ref{lem:U3Widget}~$(ii)$ says that if a $3$-coloring codes that $\ell_i$ is false and that $\ell_0 \orr \ell_1 \orr \cdots \orr \ell_{i-1}$ is false, then it must also code that $\ell_0 \orr \ell_1 \orr \cdots \orr \ell_i$ is false.  Lemma~\ref{lem:U3Widget}~$(iii)$ says that if $\nu$ is a $3$-coloring of the subgraph of $U_{x,y,z}(\ell_i,b,u)$ induced by $\{x,y,z,\ell_i,b\}$ coding that $\ell_i$ is false and that $\ell_0 \orr \ell_1 \orr \cdots \orr \ell_{i-1}$ is true, then $\nu$ can be extended to a $3$-coloring of $U_{x,y,z}(\ell_i,b,u)$ coding that $\ell_0 \orr \ell_1 \orr \cdots \orr \ell_i$ is true.  The reader may worry that here it is also possible to extend $\nu$ to incorrectly code that $\ell_0 \orr \ell_1 \orr \cdots \orr \ell_i$ is false, so we assure the reader that this is irrelevant.  What is important is that it is possible to extend $\nu$ to code the correct information.  Lemma~\ref{lem:U3Widget}~$(iv)$ says that if $\nu$ is a $3$-coloring of the subgraph of $U_{x,y,z}(\ell_i,b,u)$ induced by $\{x,y,z,\ell_i,b\}$ coding that $\ell_i$ is true, then $\nu$ can be extended to a $3$-coloring of $U_{x,y,z}(\ell_i,b,u)$ coding that $\ell_0 \orr \ell_1 \orr \cdots \orr \ell_i$ is true.  Lemma~\ref{lem:U3WidgetDecode} helps us deduce the colors of literal-coding vertices from the colors of auxiliary vertices and hence helps us compute a homogeneous set for a set of clauses from a homogeneous set for a graph.

The next widget combines $U_{x,y,z}(\ell,b,u)$ widgets into widgets coding clauses.

\newpage

\begin{widget}
$D(\ell_0,\ell_1,\dots,\ell_{n-1})$ is the following widget.
\begin{center}
\begin{tikzpicture}[x=1cm, y=1cm, node/.style={circle, draw, minimum size=2em}, widget/.style={rectangle, draw, minimum size=2em}]
	
	\node[node] (0) at (-1, 1) {$0$};
	\node[node] (1) at (1, 1) {$1$};
	\node[node] (2) at (0, 0) {$2$};
	
	\draw (0) -- (1) -- (2) -- (0);

	\node[node] (l0) at (0,-1) {$\ell_0$};
	
	\node[widget] (U1) at (0, -2) {$U^1(\ell_1', \ell_0, u_1)$};
	\node[node] (l1p) at (-2, -2) {$\ell_1'$};
	\node[widget] (R1) at (-4, -2) {$R^1(\ell_1, \ell_1')$};
	\node[node] (l1) at (-6, -2) {$\ell_1$};
	\node[node] (u1) at (0, -3) {$u_1$};
				
	\draw (2) -- (l0) -- (U1) -- (u1);
	\draw (l1) -- (R1) -- (l1p) -- (U1);
	
	\node[widget] (U2) at (0, -4) {$U^2(\ell_2', u_1, u_2)$};
	\node[node] (l2p) at (-2, -4) {$\ell_2'$};
	\node[widget] (R2) at (-4, -4) {$R^2(\ell_2, \ell_2')$};
	\node[node] (l2) at (-6, -4) {$\ell_2$};
	\node[node] (u2) at (0, -5) {$u_2$};
				
	\draw (u1) -- (U2) -- (u2);
	\draw (l2) -- (R2) -- (l2p) -- (U2);
	
	\node[widget] (U3) at (0, -6) {$U^3(\ell_3, u_2, u_3)$};
	\node[node] (l3) at (-2, -6) {$\ell_3$};
	\node[node] (u3) at (0, -7) {$u_3$};
				
	\draw (u2) -- (U3) -- (u3);
	\draw (l3) -- (U3);
	
	\node[node] (un2) at (0, -9) {$u_{n-2}$};
	\node[widget] (Un1) at (0, -10) {$U^{n-1}(\ell_{n-1}', u_{n-2}, u_{n-1})$};
	\node[node] (ln1p) at (-3, -10) {$\ell_{n-1}'$};
	\node[widget] (Rn1) at (-6, -10) {$R^{n-1}(\ell_{n-1}, \ell_{n-1}')$};
	\node[node] (ln1) at (-9, -10) {$\ell_{n-1}$};
	\node[node] (un1) at (0, -11) {$u_{n-1}$};
	\node[node] (x) at (0, -12) {$x$};

	\draw[dashed] (u3) -- (un2);				
	\draw (un2) -- (Un1) -- (un1) -- (x);
	\draw (ln1) -- (Rn1) -- (ln1p) -- (Un1);
	
\end{tikzpicture}
\end{center}
The widget also contains the edge $(2,\ell_i)$ for each $i < n$, which we omitted from the diagram to keep it legible.  For $0 < i < n$, the sub-widget $U^i(\ell_i', u_{i-1}, u_i)$ is $U_{0,1,2}(\ell_i, u_{i-1}, u_i)$ if $i \equiv 0 \mod 3$, is $U_{2,0,1}(\ell_i', u_{i-1}, u_i)$ if $i \equiv 1 \mod 3$ (with $\ell_0$ in place of $u_0$ when $i=1$), and is $U_{1,2,0}(\ell_i', u_{i-1}, u_i)$ if $i \equiv 2 \mod 3$.  For $0 < i < n$, the sub-widget $R^i(\ell_i,\ell_i')$ is $R_{\substack{1 \mapsto 0 \\ 0 \mapsto 2}}(\ell_i,\ell_i')$ if $i \equiv 1 \mod 3$ and is $R_{\substack{0 \mapsto 1 \\ 1 \mapsto 2}}(\ell_i,\ell_i')$ if $i \equiv 2 \mod 3$.  If $i \equiv 0 \mod 3$, then there is just the vertex $\ell_i$ instead of the subgraph

\begin{center}
\begin{tikzpicture}[x=1cm, y=1cm, node/.style={circle, draw, minimum size=2em}, widget/.style={rectangle, draw, minimum size=2em}]
	
	\node[node] (l) at (0, 0) {$\ell_i$};
	\node[widget] (R) at (2, 0) {$R^i(\ell_i, \ell_i')$};
	\node[node] (lp) at (4, 0) {$\ell_i'$};
	
	\draw (l) -- (R) -- (lp);
		
\end{tikzpicture}.
\end{center}
The vertex $x$ is $0$ if $n-1 \equiv 0 \mod 3$, is $2$ if $n-1 \equiv 1 \mod 3$, and is $1$ if $x \equiv 2 \mod 3$.  Note that the vertex $x$ is thus drawn twice because it is identical to one of $0$, $1$, $2$.  For clarity, we also point out that in the case of $D(\ell_0)$, the widget is simply
\begin{center}
\begin{tikzpicture}[x=1cm, y=1cm, node/.style={circle, draw, minimum size=2em}, widget/.style={rectangle, draw, minimum size=2em}]
	
	\node[node] (0) at (0, 0) {$0$};
	\node[node] (1) at (1, 1) {$1$};
	\node[node] (2) at (2, 0) {$2$};
	
	\draw (0) -- (1) -- (2) -- (0);

	\node[node] (l0) at (1,-1) {$\ell_0$};
	
	\draw (0) -- (l0) -- (2);
	
\end{tikzpicture}.
\end{center}
\end{widget}

\begin{lemma}\label{lem:DWidget}{\ }
\begin{itemize}
\item[(i)] Every $3$-coloring $\nu$ of the subgraph of $D(\ell_0,\ell_1,\dots,\ell_{n-1})$ induced by $\{0,1,2,\ell_0,\ell_1,\dots,\ell_{n-1}\}$ in which $\ell_i =_\nu 1$ for some $i < n$ can be extended to a $3$-coloring of $D(\ell_0,\ell_1,\dots,\ell_{n-1})$.

\item[(ii)] There is no $3$-coloring $\nu$ of $D(\ell_0,\ell_1,\dots,\ell_{n-1})$ in which $\ell_0 =_\nu \ell_1 =_\nu \cdots =_\nu \ell_{n-1} =_\nu 0$.
\end{itemize}
\end{lemma}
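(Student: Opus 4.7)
The widget $D(\ell_0, \ldots, \ell_{n-1})$ is engineered to encode the clause $\ell_0 \vee \cdots \vee \ell_{n-1}$: each $U^i$ behaves as an ``OR-gate'' whose output $u_i$ records the truth value of the partial disjunction $\ell_0 \vee \cdots \vee \ell_i$. Writing $(x_i, y_i, z_i)$ for the parameter triple governing $U^i$, the cyclic rotation of triples $(0,1,2), (2,0,1), (1,2,0)$ together with the translation widgets $R^i$ is designed so that ``$\ell_i$ is false'' corresponds uniformly to $\ell_i' =_\nu x_i$, while ``$\ell_0 \vee \cdots \vee \ell_i$ is false'' corresponds uniformly to $u_i =_\nu x_i$. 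The terminal vertex $x$ is specifically identified with $x_{n-1}$, so the edge $u_{n-1}$--$x$ forbids the chain from outputting the ``disjunction false'' value.

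I would prove (ii) first, since it contains the core propagation. Assume for contradiction that $\ell_0 =_\nu \cdots =_\nu \ell_{n-1} =_\nu 0$. A short case analysis on $i \bmod 3$, applying Lemma~\ref{lem:RWidget}~$(i)$ to each $R^i$ (where present; for $i \equiv 0 \bmod 3$ we have $\ell_i' = \ell_i$ directly), shows that $\ell_i' =_\nu x_i$ for every $i \geq 1$. A similar case check gives $x_{i-1} = y_i$ for every $i \geq 2$, and the base identity $\ell_0 =_\nu 0 = y_1$ is immediate. Now by induction on $i$, applying Lemma~\ref{lem:U3Widget}~$(ii)$ to $U^i$ with $\ell = \ell_i'$ and $b = u_{i-1}$ (or $b = \ell_0$ when $i = 1$), one concludes $u_i =_\nu x_i$ for every $i \geq 1$. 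In particular $u_{n-1} =_\nu x_{n-1} = x$, contradicting the edge between $u_{n-1}$ and $x$.

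For (i), let $i^*$ be the least index with $\ell_{i^*} =_\nu 1$. I would first use Lemma~\ref{lem:RWidget}~$(ii)$ to extend $\nu$ to every outer $R^i$ sub-widget, which colors each $\ell_i'$ so that $\ell_i =_\nu 0$ gives $\ell_i' =_\nu x_i$ and $\ell_i =_\nu 1$ gives $\ell_i' =_\nu y_i$. I would then extend to each $U^i$ in turn. For $i < i^*$, the hypotheses of Lemma~\ref{lem:U3Widget}~$(ii)$ are met, so the extension produced by part $(i)$ must satisfy $u_i =_\nu x_i$. At $i = i^*$, the hypothesis of $(iv)$ is met, so one extends with $u_{i^*} =_\nu y_{i^*}$. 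For $i > i^*$, an inductive check using the same cyclic identities shows that $u_{i-1} \in \{y_{i-1}, z_{i-1}\}$ is never equal to $y_i$, hence $(iii)$ applies when $\ell_i =_\nu 0$ (extending with $u_i =_\nu z_i$) and $(iv)$ applies when $\ell_i =_\nu 1$ (extending with $u_i =_\nu y_i$). In every case $u_i \neq_\nu x_i$, so the final output satisfies $u_{n-1} \neq_\nu x = x_{n-1}$ and the extension is consistent with the terminal edge.

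The main obstacle is purely combinatorial bookkeeping around the three-fold cyclic rotation of parameters: one must verify in each residue class that (a) the $R^i$ widgets translate $\ell_i \in \{0,1\}$ to the correct element of $\{x_i, y_i\}$, (b) $x_{i-1} = y_i$ (so that a ``false'' output of $U^{i-1}$ appears as the ``false-input'' $b =_\nu y$ of $U^i$), and (c) the terminal vertex $x$ is identified with $x_{n-1}$. These congruences are built into the widget's design; once they are established, the logical content reduces to straightforward inductions using Lemma~\ref{lem:U3Widget}.
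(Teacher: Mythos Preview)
Your proposal is correct and follows essentially the same approach as the paper's proof: both arguments reduce to inductions along the chain of $U^i$ sub-widgets, invoking Lemma~\ref{lem:RWidget} to translate $\ell_i$ into $\ell_i'$ and Lemma~\ref{lem:U3Widget} parts $(ii)$, $(iii)$, $(iv)$ to propagate the ``false''/``true'' value through each $u_i$. The only organizational difference is that the paper proves $(i)$ via a single forward induction carrying the invariant ``if some $\ell_j =_\nu 1$ for $j \leq i$ then $u_i \neq_\nu x_i$,'' whereas you split into three phases around the least true index $i^*$; the underlying case analysis and use of the cyclic identities $x_{i-1} = y_i$ and $x = x_{n-1}$ are the same.
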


\begin{proof}
For~$(i)$, let $\nu$ be a $3$-coloring of the subgraph induced by $\{0,1,2,\ell_0,\ell_1,\dots,\ell_{n-1}\}$ in which $\ell_i =_\nu 1$ for some $i < n$.  For each $i < n$, let $D_i(\ell_0,\ell_1,\dots,\ell_{n-1})$ denote the subgraph of $D(\ell_0,\ell_1,\dots,\ell_{n-1})$ induced by $0$, $1$, $2$ and the vertices appearing in $R^j(\ell_j,\ell_j')$ and $U^j(\ell_j',u_{j-1},u_j)$ for all $j \leq i$.  That is, if $i < n-1$, then $D_i(\ell_0,\ell_1,\dots,\ell_{n-1})$ is $D(\ell_0,\ell_i,\dots,\ell_i)$ without the edge between $u_i$ and $x$; and if $i=n-1$, then $D_i(\ell_0,\ell_1,\dots,\ell_{n-1})$ is $D(\ell_0,\ell_i,\dots,\ell_{n-1})$.  Item~$(i)$ is then the instance $i = n-1$ of the following claim.

\begin{claim*}
For all $i < n$, $\nu$ can be extended to a $3$-coloring of $D_i(\ell_0,\ell_1,\dots,\ell_{n-1})$.  Moreover, if $\ell_j =_\nu 1$ for some $j \leq i$, then $\nu$ can be extended to a $3$-coloring of $D_i(\ell_0,\ell_1,\dots,\ell_{n-1})$ in which $\nu(u_i)$ codes this fact.  That is, if $i \equiv 0 \mod 3$, then $u_i \neq_\nu 0$; if $i \equiv 1 \mod 3$, then $u_i \neq_\nu 2$; and if $i \equiv 2 \mod 3$, then $u_i \neq_\nu 1$ (for $i=0$, interpret $u_0$ as $\ell_0$).
\end{claim*}

\begin{proof}
By induction on $i < n$.  For $i=0$, $D_0(\ell_0,\ell_1,\dots,\ell_{n-1})$ is the subgraph of induced by $\{0,1,2,\ell_0\}$, which is $3$-colored by $\nu$ by assumption.  Clearly if $\ell_0 =_\nu 1$, then $\ell_0 \neq_\nu 0$.  Now suppose that $\nu$ has been extended to a $3$-coloring of $D_{i-1}(\ell_0,\ell_1,\dots,\ell_{n-1})$.  For the sake of argument, suppose that $i \equiv 1 \mod 3$ (the $i \equiv 0 \mod 3$ and $i \equiv 2 \mod 3$ cases are symmetric), and suppose that if $\ell_j =_\nu 1$ for some $j \leq i-1$, then $u_{i-1} \neq_\nu 0$.  First suppose that $\ell_i =_\nu 0$.  As $R^i(\ell_i,\ell_i') = R_{\substack{1 \mapsto 0 \\ 0 \mapsto 2}}(\ell_i,\ell_i')$, apply Lemma~\ref{lem:RWidget}~$(i)$ to extend $\nu$ to $R^i(\ell_i,\ell_i')$ so that $\ell_i' =_\nu 2$.  By Lemma~\ref{lem:U3Widget}~$(i)$, it is possible to extend $\nu$ to $U^i(\ell_i',u_{i-1},u_i)$.  Furthermore, if $\ell_j =_\nu 1$ for some $j \leq i-1$, then $u_{i-1} \neq_\nu 0$.  In this situation, by Lemma~\ref{lem:U3Widget}~$(iii)$, it is possible to extend $\nu$ to $U^i(\ell_i',u_{i-1},u_i) = U_{2,0,1}(\ell_i',u_{i-1},u_i)$ so that $u_i =_\nu 1$ (and hence $u_i \neq_\nu 2$).  Now suppose that $\ell_i =_\nu 1$.  As $R^i(\ell_i,\ell_i') = R_{\substack{1 \mapsto 0 \\ 0 \mapsto 2}}(\ell_i,\ell_i')$, apply Lemma~\ref{lem:RWidget}~$(i)$ to extend $\nu$ to $R^i(\ell_i,\ell_i')$ so that $\ell_i' =_\nu 0$.  By Lemma~\ref{lem:U3Widget}~$(iv)$, it is possible to extend $\nu$ to $U^i(\ell_i',u_{i-1},u_i) = U_{2,0,1}(\ell_i',u_{i-1},u_i)$ so that $u_i =_\nu 0$ (and hence $u_i \neq_\nu 2$).
\end{proof}

For~$(ii)$, suppose for a contradiction that $\nu$ is a $3$-coloring of $D(\ell_0,\ell_1,\dots,\ell_{n-1})$ in which $\ell_0 =_\nu \ell_1 =_\nu \cdots =_\nu \ell_{n-1} =_\nu 0$.  We prove by induction on $i < n$ that $u_i =_\nu 0$ if $i \equiv 0 \mod 3$, $u_i = 2$ if $i \equiv 1 \mod 3$, and $u_i =_\nu 1$ if $i \equiv 2 \mod 3$ (again $u_0$ is interpreted as $\ell_0$).  Item~$(ii)$ follows from the case $i=n-1$ because this gives the contradiction $u_{n-1} =_\nu x$.  For $i = 0$, $\ell_0 =_\nu 0$ by assumption.  Now consider $0 < i < n$, assume for the sake of argument that $i \equiv 1 \mod 3$ (the $i \equiv 0 \mod 3$ and $i \equiv 2 \mod 3$ cases are symmetric), and assume that $u_{i-1} =_\nu 0$.  By Lemma~\ref{lem:RWidget}~$(i)$ for the widget $R^i(\ell_i,\ell_i') = R_{\substack{1 \mapsto 0 \\ 0 \mapsto 2}}(\ell_i,\ell_i')$, we have that $\ell_i' =_\nu 2$.  Thus $U^i(\ell_i',u_{i-1},u_i) = U_{2,0,1}(\ell_i',u_{i-1},u_i)$, $\ell_i' =_\nu 2$, and $u_{i-1} =_\nu 0$, so it must be that $u_i =_\nu 2$ by Lemma~\ref{lem:U3Widget}~$(ii)$.
\end{proof}

\begin{lemma}\label{lem:DWidgetDecode}
Let $\nu$ be a $3$-coloring of $D(\ell_0,\ell_1,\dots,\ell_{n-1})$.  If $0<i<n$ and $w$ is a vertex appearing in an $R^i(\ell_i,\ell_i')$ sub-widget or a $U^i(\ell_i',u_{i-1},u_i)$ sub-widget that is not $0$, $1$, or $2$, then the color of $w$ determines either the color of $\ell_i$ or the color of $\ell_{i-1}$.
\end{lemma}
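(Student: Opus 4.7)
The plan is to propagate the color information through the chain of sub-widgets, treating the vertex $w$ according to its location and relying on Lemma~\ref{lem:RWidget}~(iii) and Lemma~\ref{lem:U3WidgetDecode}. I would organize the case analysis into three blocks.

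First, suppose $w$ lies in $R^i(\ell_i,\ell_i')$ (which is only present when $i\not\equiv 0\bmod 3$) and is not $0$, $1$, or $2$. Then $w$ is $\ell_i$, $\ell_i'$, or the interior vertex $v$ of $R^i$. Lemma~\ref{lem:RWidget}~(iii) asserts that both $\nu(\ell_i')$ and $\nu(v)$ determine $\nu(\ell_i)$, and $\nu(\ell_i)$ trivially determines itself. This block requires no more than a direct appeal to the reversibility clause of Lemma~\ref{lem:RWidget}.

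Second, suppose $w$ lies in $U^i(\ell_i',u_{i-1},u_i)$ (reading $\ell_i$ for $\ell_i'$ when $i\equiv 0\bmod 3$). If $w$ is one of $\ell_i'$, $\bar{\ell_i'}$, $r$, $u_i$, or an interior vertex of the nested $R_{\substack{x\mapsto y\\y\mapsto z}}(\ell_i',r)$ sub-widget distinct from $x,y,z$, then Lemma~\ref{lem:U3WidgetDecode} says that $\nu(w)$ determines $\nu(\ell_i')$, and Lemma~\ref{lem:RWidget}~(i) applied to $R^i$ (or $\ell_i'=\ell_i$ directly when $i\equiv 0\bmod 3$) then fixes $\nu(\ell_i)$. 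If instead $w=u_{i-1}$ and $i>1$, I would apply Lemma~\ref{lem:U3WidgetDecode} to $U^{i-1}$ to determine $\nu(\ell_{i-1}')$, and then use $R^{i-1}$ to determine $\nu(\ell_{i-1})$. For the boundary case $i=1$, the $b$-input of $U^1$ is literally $\ell_0=\ell_{i-1}$, so $\nu(w)$ is $\nu(\ell_{i-1})$ itself.

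The main obstacle is the third block: $w=d$, the middle vertex of $U^i$. Here Lemma~\ref{lem:U3WidgetDecode} provides only the three ``moreover'' implications. The cases $d=_\nu x$ and $d=_\nu y$ immediately pin down $\nu(\ell_i')$ and hence $\nu(\ell_i)$ as before. The delicate case is $d=_\nu z$, which only yields $u_{i-1}\neq_\nu z$. I would handle it by chaining: Lemma~\ref{lem:U3WidgetDecode} applied to $U^{i-1}$ translates the two remaining possibilities for $\nu(u_{i-1})$ into at most two possibilities for $\nu(\ell_{i-1}')$, and intersecting these with the two possibilities allowed by $R^{i-1}$ (together with $\nu(\ell_{i-1})\neq 2$, forced by the edge $(2,\ell_{i-1})$ present in $D$) leaves exactly one color, pinning $\nu(\ell_{i-1})$ down. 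Because the assignment $\{x,y,z\}\leftrightarrow\{0,1,2\}$ rotates with $i\bmod 3$, this intersection argument must be carried out separately in each of the three residue classes, but in all three the intersection collapses to a single color. The $i=1$ instance is the simplest: $u_0=\ell_0$, and $\ell_0\neq_\nu z$ together with the edge $(2,\ell_0)$ already determines $\nu(\ell_0)$. The bookkeeping across these three rotations is the only genuinely fiddly part of the proof.
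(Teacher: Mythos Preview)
Your proposal is correct and follows essentially the same case analysis as the paper: $w$ in $R^i$ via Lemma~\ref{lem:RWidget}(iii); $w$ in $U^i$ but $w\neq u_{i-1},d$ via Lemma~\ref{lem:U3WidgetDecode}; $w=u_{i-1}$ via $U^{i-1}$; and $w=d$ via the three clauses of Lemma~\ref{lem:U3WidgetDecode}.

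The one place where the paper is tighter than your outline is the $d=_\nu z$ subcase. You propose to restrict $\nu(u_{i-1})$ to two values, push through $U^{i-1}$ to get at most two values for $\nu(\ell_{i-1}')$, and then intersect with the constraints from $R^{i-1}$ and the edge $(2,\ell_{i-1})$, checking each residue class separately. The paper avoids this bookkeeping by observing a uniform relationship between consecutive $U$-widgets: if $U^i=U_{x,y,z}$ then $U^{i-1}=U_{y,z,x}$ (this is exactly how the cyclic rotation of the permutations was set up). In $U_{y,z,x}$, the output vertex $u_{i-1}$ satisfies ``$u_{i-1}\neq_\nu z$ implies $\ell_{i-1}'=_\nu y$'' directly from the proof of Lemma~\ref{lem:U3WidgetDecode}, so $\nu(\ell_{i-1}')$ is pinned to a single value immediately, without any intersection argument or residue-class split. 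Your route works, but the cyclic observation is the cleaner organizing principle. (Minor note: in block~2 you want Lemma~\ref{lem:RWidget}(iii), not~(i), to pass from $\nu(\ell_i')$ back to $\nu(\ell_i)$.)
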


\begin{proof}
Consider a $3$-coloring $\nu$ of $D(\ell_0,\ell_1,\dots,\ell_{n-1})$, an $i$ with $0<i<n$, and a vertex $w$ in an $R^i(\ell_i,\ell_i')$ sub-widget or a $U^i(\ell_i',u_{i-1},u_i)$ sub-widget that is not $0$, $1$, or $2$.  If $w$ appears in $R^i(\ell_i,\ell_i')$, then the color of $w$ determines the color of $\ell_i$ by Lemma~\ref{lem:RWidget}~$(iii)$.  If $w$ appears in $U^i(\ell_i',u_{i-1},u_i)$, then there are a few cases.  If $w$ is not $u_{i-1}$ or $d$, then the color of $w$ determines the color $\ell_i'$ by Lemma~\ref{lem:U3WidgetDecode}, which we have just seen determines the color of $\ell_i$ (or $\ell_i'$ is $\ell_i$ in the case $i \equiv 0 \mod 3$).  Consider $w = u_{i-1}$.  If $i=1$, then $u_{i-1}$ is really $\ell_0$, and of course the color of $\ell_0$ determines the color of $\ell_0$.  Otherwise, $i>1$, $u_{i-1}$ appears in the sub-widget $U^{i-1}(\ell_{i-1}',u_{i-2},u_{i-1})$, and hence the color of $u_{i-1}$ determines the color of $\ell_{i-1}$.

Lastly, consider $w=d$.  $U^i(\ell_i',u_{i-1},u_i)$ is $U_{x,y,z}(\ell_i',u_{i-1},u_i)$, where $x$, $y$, and $z$ are some permutation of $0$, $1$, and $2$.  If $d =_\nu x$ or $d =_\nu y$, then this determines the color of $\ell_i'$ by Lemma~\ref{lem:U3WidgetDecode}, which in turn determines the color of $\ell_i$.  Otherwise $d =_\nu z$, meaning that $u_{i-1} \neq_\nu z$ by Lemma~\ref{lem:U3WidgetDecode}.  If $i=1$, then $z=1$, $u_0$ is really $\ell_0$, and we conclude that $\ell_0 =_\nu 0$.  If $i > 1$, then $U^{i-1}(\ell_{i-1}', u_{i-2}, u_{i-1})$ is $U_{y,z,x}(\ell_{i-1}', u_{i-2}, u_{i-1})$ and, by examining the proof of Lemma~\ref{lem:U3WidgetDecode}, $u_{i-1} \neq_\nu z$ implies that $\ell_{i-1}' =_\nu y$, which in turn determines the color of $\ell_{i-1}$.
\end{proof}

To code the conjunction of two clauses $\ell_0 \orr \ell_1 \orr \cdots \orr \ell_{n-1}$ and $s_0 \orr s_1 \orr \cdots \orr s_{m-1}$, we overlap the widgets $D(\ell_0,\ell_1,\dots,\ell_{n-1})$ and $D(s_0,s_1,\dots,s_{m-1})$ by sharing the vertices pertaining to the longest common prefix of $\ell_0,\ell_1,\dots,\ell_{n-1}$ and $s_0,s_1,\dots,s_{m-1}$.  For example, consider the clauses $\ell_0 \orr \ell_1 \orr \ell_2 \orr \ell_3 \orr \ell_4$ and $\ell_0 \orr \ell_1 \orr s_2 \orr s_3$, where $\ell_2 \neq s_2$.  We overlap $D(\ell_0,\ell_1,\ell_2,\ell_3,\ell_4)$ and $D(\ell_0,\ell_1,s_2,s_3)$ as follows:

\begin{center}
\begin{tikzpicture}[x=1cm, y=1cm, node/.style={circle, draw, minimum size=2em}, widget/.style={rectangle, draw, minimum size=2em}]
	
	\node[node] (0) at (-1, 1) {$0$};
	\node[node] (1) at (1, 1) {$1$};
	\node[node] (2) at (0, 0) {$2$};
	
	\node[node] (l0) at (0,-1) {$\ell_0$};
			
	\draw (0) -- (1) -- (2) -- (0);
	
	\draw (2) -- (l0);
	
	\node[widget] (U1) at (-0, -2) {$U^1(\ell_1', \ell_0, u_1)$};
	\node[node] (l1p) at (-2, -2) {$\ell_1'$};
	\node[widget] (R1) at (-4, -2) {$R^1(\ell_1, \ell_1')$};
	\node[node] (l1) at (-6, -2) {$\ell_1$};
	\node[node] (u1) at (0, -3) {$u_1$};
	
	\draw (l0) -- (U1) -- (u1);
	\draw (l1) -- (R1) -- (l1p) -- (U1);
	
	\node[widget] (U2l) at (-2, -4) {$U^2(\ell_2', u_1, u_2)$};
	\node[node] (l2p) at (-4, -4) {$\ell_2'$};
	\node[widget] (R2l) at (-6, -4) {$R^2(\ell_2, \ell_2')$};
	\node[node] (l2) at (-6, -5) {$\ell_2$};
	\node[node] (u2) at (-2, -5) {$u_2$};
	
	\node[widget] (U2s) at (2, -4) {$U^2(s_2', u_1, v_2)$};
	\node[node] (s2p) at (4, -4) {$s_2'$};
	\node[widget] (R2s) at (6, -4) {$R^2(s_2, s_2')$};
	\node[node] (s2) at (6, -5) {$s_2$};
	\node[node] (v2) at (2, -5) {$v_2$};

	\draw (u1) -- (U2l) -- (u2);
	\draw (l2) -- (R2l) -- (l2p) -- (U2l);
	
	\draw (u1) -- (U2s) -- (v2);
	\draw (s2) -- (R2s) -- (s2p) -- (U2s);
	
	\node[widget] (U3l) at (-2, -6) {$U^3(\ell_3', u_2, u_3)$};
	\node[node] (l3) at (-4, -6) {$\ell_3$};
	\node[node] (u3) at (-2, -7) {$u_3$};
	
	\node[widget] (U3s) at (2, -6) {$U^3(s_3', v_2, v_3)$};
	\node[node] (s3) at (4, -6) {$s_3$};
	\node[node] (v3) at (2, -7) {$v_3$};

	\draw (u2) -- (U3l) -- (u3);
	\draw (l3) -- (U3l);
	
	\draw (v2) -- (U3s) -- (v3);
	\draw (s3) -- (U3s);
	
	\node[widget] (U4l) at (-2, -8) {$U^4(\ell_4', u_3, u_4)$};
	\node[node] (l4p) at (-4, -8) {$\ell_4'$};
	\node[widget] (R4l) at (-6, -8) {$R^4(\ell_4, \ell_4')$};
	\node[node] (l4) at (-6, -9) {$\ell_4$};
	\node[node] (u4) at (-2, -9) {$u_4$};

	\draw (u3) -- (U4l) -- (u4);
	\draw (l4) -- (R4l) -- (l4p) -- (U4l);
	
	\node[node] (2p) at (-2, -10) {$2$};
	\node[node] (0p) at (2, -8) {$0$};
	
	\draw (u4) -- (2p);
	\draw (v3) -- (0p);
	
\end{tikzpicture}
\end{center}

\begin{theorem}\label{thm:rcolor3-rsat}
$\rca \vdash \rcolor_3 \imp \rwkl$.
\end{theorem}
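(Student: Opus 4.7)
The plan is to reduce to Ramsey-type satisfiability: by Proposition~\ref{prop:RWKL2BranchRSAT}, it suffices to show that $\rca \vdash \rcolor_3 \imp \rsat_{\textup{2-branching}}$. Given a 2-branching finitely satisfiable set of clauses $C = \{c_j : j \in \Nb\}$ with atoms $\{a_i : i \in \Nb\}$, I will assemble the widgets of this section into a locally 3-colorable graph $G$ whose 3-homogeneous sets decode to sets homogeneous for $C$.

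Concretely, $G$ begins with a triangle $\{v_0, v_1, v_2\}$ playing the role of the reference triangle $\{0,1,2\}$ of the widget definitions. For each atom $a_i$, add a vertex $a_i$ adjacent to $v_2$ and a negation vertex $\bar a_i$ adjacent to both $v_2$ and $a_i$, so that $a_i$ and $\bar a_i$ are forced to take complementary colors in $\{\nu(v_0), \nu(v_1)\}$. For each clause $c_j = \ell_{j,0} \orr \cdots \orr \ell_{j,n_j-1}$, introduce a fresh copy of the widget $D(\ell_{j,0}^*, \ldots, \ell_{j,n_j-1}^*)$, identifying the slot $\ell_{j,i}^*$ with the existing vertex $a_i$ if $\ell_{j,i} = a_i$ and with $\bar a_i$ otherwise; all widgets share the triangle. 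The graph $G$ is locally 3-colorable: for any finite $V_0 \subseteq V(G)$, finite satisfiability of $C$ produces a truth assignment for the finitely many clauses whose widgets meet $V_0$, and Lemma~\ref{lem:DWidget}$(i)$ then extends the corresponding coloring into each such widget.

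Apply $\rcolor_3$ to $G$ to obtain an infinite 3-homogeneous set $H$; the goal is to decode $H$ into an infinite set $H' \subseteq \atoms(C)$ homogeneous for $C$. The decoding proceeds in three pigeonhole stages. First, among the six permutations of $\{0,1,2\}$ that the triangle may receive, locate a permutation $\pi^*$ such that $\pi^*$-witnessing 3-colorings (with $H \cap V_0$ colored $0$) exist for arbitrarily large finite subgraphs $V_0$: if for each $\pi$ some finite $V_0^\pi$ admitted no $\pi$-witnessing coloring, combining the six $V_0^\pi$ into a single finite subgraph would contradict the homogeneity of $H$. Second, the decoding lemmas (Lemma~\ref{lem:RWidget}$(iii)$, Lemma~\ref{lem:U3WidgetDecode}, and Lemma~\ref{lem:DWidgetDecode}) assign to each non-triangle $w \in H$ a literal vertex whose color is determined by $\nu(w) = 0$ in any $\pi^*$-witnessing coloring, hence an atom $a(w)$ and a forced truth value $c(w) \in \{0,1\}$; pigeonhole to an infinite $H_1 \subseteq H$ on which $c(w)$ takes a single constant value $c$. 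Third, thin $H_1$ to an infinite $H_2 \subseteq H_1$ on which the $a(w)$ are pairwise distinct, and set $H' := \{a(w) : w \in H_2\}$. Then $H'$ is homogeneous for $C$ with value $c$: given a finite $C_0 \subseteq C$, take $V_0$ containing the widgets for $C_0$ together with a finite portion of $H_2$ decoding to all atoms of $H'$ that occur in $C_0$; the $\pi^*$-witnessing 3-coloring of $V_0$ yields a truth assignment satisfying $C_0$ and assigning $c$ to each atom of $H'$ appearing there, which one extends by $c$ on the rest of $H'$.

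The main obstacle is the third stage: showing that $\{a(w) : w \in H_1\}$ is genuinely infinite. The worry is that $H_1$ could concentrate at bounded sub-widget indices across many widgets, yielding only boundedly many decoded atoms. Overcoming this relies on the fact that 2-branching together with the infinitude of $\atoms(C)$ forces clauses (and hence widgets) of unbounded length to exist, combined with the observation, implicit in Lemma~\ref{lem:DWidgetDecode}, that auxiliary vertices at distinct sub-widget indices decode to distinct literal positions and hence to distinct atoms; this forces the indices witnessed by $H_1$ to be unbounded and so the decoded atom set to be infinite.
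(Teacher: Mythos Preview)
Your reduction to $\rsat_{\textup{2-branching}}$, the local $3$-colorability argument via Lemma~\ref{lem:DWidget}$(i)$, and the decoding via Lemmas~\ref{lem:RWidget}$(iii)$, \ref{lem:U3WidgetDecode}, and~\ref{lem:DWidgetDecode} are all fine. The problem is exactly where you say it is, and your proposed resolution does not work.

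You introduce a \emph{fresh} copy of the $D$-widget for each clause, with no sharing of internal vertices between different clause widgets. You then argue that the sub-widget indices witnessed by $H_1$ must be unbounded because long clauses exist. But nothing forces $H_1$ to visit the long widgets. If $C$ contains infinitely many clauses, then your graph contains infinitely many vertices at sub-widget index~$1$: one $u_1$, one $R^1$ sub-widget, and one $U^1$ sub-widget per clause, all with fresh internal vertices. An infinite $3$-homogeneous set $H$ may perfectly well consist entirely of such index-$1$ vertices, one from each of infinitely many clause widgets. By Lemma~\ref{lem:DWidgetDecode}, every such vertex decodes to either $\ell_0$ or $\ell_1$, i.e., to atom $a_0$ or $a_1$. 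So $\{a(w) : w \in H_1\}$ could be a subset of $\{a_0, a_1\}$, and the thinning to $H_2$ is impossible. The existence of widgets of unbounded length is simply irrelevant to where $H_1$ lives.

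The paper repairs precisely this gap by \emph{overlapping} the $D$-widgets: two clauses sharing a common prefix of length $k$ share their first $k$ sub-widgets (and one first passes to a prefix-free subset of $C$ to make this coherent). Because $C$ is $2$-branching, at each index $i$ there are then at most $2^i$ sub-widgets of the form $R^i$ and at most $2^i$ of the form $U^i$ in the entire graph. Consequently only finitely many vertices of $G$ can decode to any fixed atom $a_i$, and this bound is computable from $i$. An infinite $H_1$ must therefore decode to infinitely many atoms, and the thinning goes through. The overlapping is not a cosmetic choice; it is the mechanism that makes the third stage work.
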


\begin{proof}
We prove $\rca \vdash \rcolor_3 \imp \rsat_{\textup{2-branching}}$.  The theorem follows by Proposition~\ref{prop:RWKL2BranchRSAT}.

Let $C$ be a $2$-branching and finitely satisfiable set of clauses over an infinite set of atoms $A = \{a_i : i \in \Nb\}$.  We assume that no clause in $C$ is a proper prefix of any other clause in $C$ by removing from $C$ every clause that has a proper prefix also in $C$.  We build a locally $3$-colorable graph $G$ such that every infinite homogeneous set for $G$ computes an infinite homogeneous set for $C$.  To start, $G$ contains the vertices $0$, $1$, and $2$, as well as the literal-coding vertices $a_i$ and $\neg a_i$ for each atom $a_i \in A$.  These vertices are connected according to the diagram below.

\begin{center}
\begin{tikzpicture}[x=1cm, y=1cm, node/.style={circle, draw, minimum size=2em}]

	\node[node] (0) at (-1,1) {$0$};
	\node[node] (1) at (-1, -1)  {$1$};
	\node[node] (2) at (0, 0) {$2$};
	
	\node[node] (a0) at (2,1) {$a_0$};
	\node[node] (na0) at (4,1) {$\neg a_0$};
	\node[node] (a1) at (2,-1) {$a_1$};
	\node[node] (na1) at (4,-1) {$\neg a_1$};
	
	\draw (0) -- (1) -- (2) -- (0);
	\draw (2) -- (a0) -- (na0) -- (2);
	\draw (2) -- (a1) -- (na1) -- (2);
	\draw[dashed] (3,-1.5) -- (3,-3);
	
\end{tikzpicture}
\end{center}

Now build $G$ in stages by considering the clauses in $C$ one-at-a-time.  For clause $\ell_0 \orr \ell_1 \orr \cdots \orr \ell_{n-1}$, find the previously appearing clause $s_0 \orr s_1 \orr \cdots \orr s_{m-1}$ having the longest common prefix with $\ell_0 \orr \ell_1 \orr \cdots \orr \ell_{n-1}$.  Then add the widget $D(\ell_0,\ell_1,\dots,\ell_{n-1})$ by overlapping it with $D(s_0,s_1,\dots,s_{m-1})$ as described above.  In $D(\ell_0,\ell_1,\dots,\ell_{n-1})$, for each $i < n$, the vertex $\ell_i$ is the vertex $a_i$ if the literal $\ell_i$ is the literal $a_i$, and the vertex $\ell_i$ is the vertex $\neg a_i$ if the literal $\ell_i$ is the literal $\neg a_i$.  The vertices appearing in the sub-widgets $R^i(\ell_i,\ell_i')$ and $U^i(\ell_i',u_{i-1},u_i)$ for $i$ beyond the index at which $\ell_0 \orr \ell_1 \orr \cdots \orr \ell_{n-1}$ differs from $s_0 \orr s_1 \orr \cdots \orr s_{m-1}$ are chosen fresh, except for $0$, $1$, $2$, and the literal-coding vertices $\ell_i$.  This completes the construction of $G$.

\begin{claim*}
$G$ is locally $3$-colorable.
\end{claim*}

\begin{proof}
Let $G_0$ be a finite subgraph of $G$.  Let $s$ be the latest stage at which a vertex in $G_0$ appears, and let $C_0 \subseteq C$ be the set of clauses considered up to stage $s$.  By extending $G_0$, we may assume that it is the graph constructed up to stage $s$.

By the finite satisfiability of $C$, let $t \colon \atoms(C_0) \imp \{\true, \false\}$ be a truth assignment satisfying $C_0$.  The truth assignment $t$ induces a $3$-coloring $\nu$ on the literal-coding vertices in $G_0$.  First define $\nu$ on the truth value-coding vertices by $\nu(0) = 0$, $\nu(1)=1$, and $\nu(2)=2$.  If $t(\ell)$ is defined for the literal $\ell$, then set $\nu(\ell) = t(\ell)$ (identifying $0$ with $\false$ and $1$ with $\true$).  If $\ell$ is a literal-coding vertex in $G_0$ on which $t$ is not defined, then set $\nu(\ell)=1$ if $\ell$ is a positive literal and set $\nu(\ell)=0$ if $\ell$ is a negative literal.  For each clause $\ell_0 \orr \ell_1 \orr  \cdots \orr \ell_{n-1}$ in $C_0$, extend $\nu$ to a $3$-coloring of $G_0$ by coloring each widget $D(\ell_0,\ell_1,\dots,\ell_{n-1})$ according to the algorithm implicit in the proof of Lemma~\ref{lem:DWidget}~$(i)$.  The hypothesis of Lemma~\ref{lem:DWidget}~$(i)$ is satisfied because $t$ satisfies $C_0$, so for each clause $\ell_0 \orr \ell_1 \orr \cdots \orr \ell_{n-1}$ in $C_0$, there is an $i < n$ such that $\ell_i =_\nu 1$.  Overlapping widgets $D(\ell_0,\ell_1,\dots,\ell_{n-1})$ and $D(s_0,s_1,\dots,s_{m-1})$ are colored consistently because the colors of the shared vertices depend only on the colors of the literal-coding vertices corresponding to the longest common prefix of the two clauses.
\end{proof}

Apply $\rcolor_3$ to $G$ to get an infinite homogeneous set $H$.  We may assume that $H$ contains exactly one of the truth value-coding vertices $0$, $1$, or $2$.  Call this vertex $c$.

Consider a vertex $w \in H$ that is not $c$.  The vertex $w$ appears in some widget $D(\ell_0, \ell_1, \dots, \ell_{n-1})$, and, by Lemma~\ref{lem:DWidgetDecode}, from $w$ we can compute an $i < n$ and a $c_i \in \{0,1\}$ such that $\ell_i =_\nu c_i$ whenever $\nu$ is a $3$-coloring of $D(\ell_0, \ell_1, \dots, \ell_{n-1})$ in which $w =_\nu c$.  Moreover, for each literal $\ell$, we can compute a bound on the number of vertices $w$ in the graph whose color determines the color of $\ell$.  Still by Lemma~\ref{lem:DWidgetDecode}, if $w$ appears in an $R^i(\ell_i,\ell_i')$ sub-widget or a $U^i(\ell_i',u_{i-1},u_i)$ sub-widget, then the color of $w$ determines either the color of $\ell_i$ or the color of $\ell_{i-1}$.  Thus the vertices whose colors determine the color of $\ell_i$ only appear in $R^i(\ell_i,\ell_i')$, $U^i(\ell_i',u_{i-1},u_i)$, $R^{i+1}(\ell_{i+1},\ell_{i+1}')$, and $U^{i+1}(\ell_{i+1}',u_i,u_{i+1})$ sub-widgets.  The fact that $C$ is a $2$-branching set of clauses and our protocol for overlapping the $D(\ell_0, \ell_1, \dots, \ell_{n-1})$ widgets together imply that, for every $j > 0$, there are at most $2^j$ sub-widgets of the form $R^j(\ell_j,\ell_j')$ and at most $2^j$ sub-widgets of the form $U^j(\ell_j',u_{j-1},u_j)$.  This induces the desired bound on the number of vertices whose colors determine the color of $\ell_i$.

Thus from $H$ we can compute an infinite set $H'$ of pairs $\la \ell, c_\ell \ra$, where each $\ell$ is a literal-coding vertex and each $c_\ell$ is either $0$ or $1$, such that every finite subgraph of $G$ is $3$-colorable by a coloring $\nu$ such that $(\forall \la \ell, c_\ell \ra \in H')(\ell =_\nu c_\ell)$.  Modify $H'$ to contain only pairs $\la a, c_a \ra$ for positive literal-coding vertices $a$ by replacing each pair of the form $\la \neg a, c_{\neg a} \ra$ with $\la a, 1-c_{\neg a} \ra$.  Now apply the infinite pigeonhole principle to $H'$ to get an infinite set $H''$ of positive literal-coding vertices $a$ and a new $c \in \{0,1\}$ such that the corresponding $c_a$ is always $c$.  We identify a positive literal-coding vertex $a$ with the corresponding atom and show that $H''$ is homogeneous for $C$.

Let $C_0 \subseteq C$ be finite.  Let $G_0$ be the finite subgraph of $G$ containing $\{0,1,2\}$, the literal-coding vertices whose atoms appear in the clauses in $C_0$, and the $D(\ell_0,\ell_1,\dots,\ell_{n-1})$ widgets for the clauses $\ell_0 \orr \ell_1 \orr \cdots \orr \ell_{n-1}$ in $C_0$.  By the homogeneity of $H''$ for $G$, there is a $3$-coloring $\nu$ of $G_0$ such that $a =_\nu c$ for every $a \in H''$.  From $\nu$, define a truth assignment $t$ on $\atoms(C_0)$ by $t(a) = \true$ if $a =_\nu 1$ and $t(a) = \false$ if $a =_\nu 0$.  This truth assignment satisfies every clause $\ell_0 \orr \ell_1 \orr \cdots \orr \ell_{n-1}$ in $C_0$.  The $3$-coloring $\nu$ must color the widget $D(\ell_0,\ell_1,\dots,\ell_{n-1})$, so by Lemma~\ref{lem:DWidget}~$(ii)$, it must be that $\ell_i =_\nu 1$ for some $i < n$.  Then $t(\ell_i) = \true$ for this same $i$, so $t$ satisfies $\ell_0 \orr \ell_1 \orr \cdots \orr \ell_{n-1}$.  Moreover, $t(a)$ is the truth value coded by $c$ for every $a \in H''$, so $H''$ is indeed an infinite homogeneous set for $C$.
\end{proof}

It follows that $\rwkl$, $\rcolor_k$, and $\lrcolor_k$ are equivalent for every fixed $k \geq 3$.

\begin{corollary}\label{cor:RCOLORkisRWKL}
For every $k \in \omega$ with $k \geq 3$, $\rca \vdash \rwkl \biimp \rcolor_k \biimp \lrcolor_k$.
\end{corollary}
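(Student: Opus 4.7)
The plan combines two already-established facts with a simple padding reduction to handle all $k \geq 3$. Lemma~\ref{RWKLprovesLRCOLOR} gives $\rca \vdash \rwkl \imp \lrcolor_k$ for every $k$, and $\lrcolor_k \imp \rcolor_k$ is trivial (instantiate with $X = V$). To close the cycle it therefore suffices to show $\rca \vdash \rcolor_k \imp \rwkl$ for every $k \geq 3$. For $k = 3$ this is immediate from Theorem~\ref{thm:rcolor3-rsat}, so the plan is to reduce the case $k \geq 4$ to the case $k = 3$ by showing $\rca \vdash \rcolor_k \imp \rcolor_3$.

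To that end, given a locally $3$-colorable graph $G = (V,E)$, I would form $G^* = (V^*,E^*)$ by adjoining $k-3$ fresh vertices $w_1,\dots,w_{k-3}$, making them a clique, and joining each $w_i$ to every vertex of $V$. Local $k$-colorability of $G^*$ is easy: given a finite $V_0 \subseteq V$, take a $3$-coloring of $V_0$ using $\{0,1,2\}$ and color each $w_i$ with color $i+2$. Applying $\rcolor_k$ to $G^*$ yields an infinite $k$-homogeneous set $H^* \subseteq V^*$.

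The first key observation is that $H^* \cap \{w_1,\dots,w_{k-3}\} = \emptyset$. Indeed, if some $w_i \in H^*$ and also some $v \in H^* \cap V$, then the $k$-homogeneity of $H^*$ applied to the finite set $\{v,w_i\}$ would produce a $k$-coloring assigning both $v$ and $w_i$ color $0$, contradicting the edge between them. Hence $w_i \in H^*$ would force $H^* \subseteq \{w_1,\dots,w_{k-3}\}$, a finite set, contradicting the infinitude of $H^*$. So $H := H^* \subseteq V$ is infinite.

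It remains to verify that $H$ is $3$-homogeneous for $G$. Given a finite $V_0 \subseteq V$, apply the $k$-homogeneity of $H^*$ to $V_0 \cup \{w_1,\dots,w_{k-3}\}$ to obtain a $k$-coloring $c$ of this induced subgraph in which every vertex of $V_0 \cap H$ receives color $0$. The $w_i$ form a $(k-3)$-clique adjacent to every vertex of $V_0$, so $c$ assigns them $k-3$ pairwise distinct colors; when $V_0 \cap H \neq \emptyset$, color $0$ is not among these (as some vertex of $V_0$ adjacent to every $w_i$ already uses it). Thus $c$ uses at most $3$ colors on $V_0$, including $0$, and relabeling these $3$ colors as $\{0,1,2\}$ with $0$ fixed gives a $3$-coloring of $V_0$ with $V_0 \cap H$ colored $0$; when $V_0 \cap H = \emptyset$ any such relabeling works (or one may appeal directly to local $3$-colorability of $G$). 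I do not expect any serious obstacle: the entire argument is an exercise in color-bookkeeping, and the only slightly delicate point is the observation that an infinite $k$-homogeneous set for $G^*$ must avoid the finite added clique.
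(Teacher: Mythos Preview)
Your proposal is correct and follows essentially the same route as the paper: the paper also cites Lemma~\ref{RWKLprovesLRCOLOR} for $\rwkl \imp \lrcolor_k$, notes $\lrcolor_k \imp \rcolor_k$ is trivial, and reduces $\rcolor_k \imp \rcolor_3$ by adjoining a $(k-3)$-clique joined to every vertex of $G$, then invokes Theorem~\ref{thm:rcolor3-rsat}. The only difference is that the paper states the padding reduction tersely without verifying the details you spell out (that the homogeneous set must avoid the added clique and that the resulting coloring restricts to three colors on $V_0$), so your write-up is simply a more explicit version of the same argument.
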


\begin{proof}
Fix $k \in \omega$ with $k \geq 3$.  $\rca \vdash \rwkl \imp \lrcolor_k$ by Lemma~\ref{RWKLprovesLRCOLOR}, and clearly $\rca \vdash \lrcolor_k \imp \rcolor_k$.  It is easy to see that $\rca \vdash \rcolor_k \imp \rcolor_3$.  Given a locally $3$-colorable graph $G$, augment $G$ by a clique $C$ containing $k-3$ fresh vertices, and put and edge between every vertex in $C$ and every vertex in $G$.  The resulting graph $G'$ is locally $k$-colorable, and every infinite set that is $k$-homogeneous for $G'$ is also $3$-homogeneous for $G$.  Finally, $\rca \vdash \rcolor_3 \imp \rwkl$ by Theorem~\ref{thm:rcolor3-rsat}.
\end{proof}

The question of the exact strength of $\rcolor_2$ remains open.  We are unable to determine if $\rcolor_2$ implies $\rwkl$ or even if $\rcolor_2$ implies $\dnr$.

\begin{question}\label{qu-RCOLOR2vsRWKL}
Does $\rca \vdash \rcolor_2 \imp \rwkl$?
\end{question}

\begin{question}\label{qu-RCOLOR2vsDNR}
Does $\rca \vdash \rcolor_2 \imp \dnr$?
\end{question}

However, we are able to show that $\rcolor_2$ and $\lrcolor_2$ are equivalent.

\begin{theorem}\label{thm-LRCOLOR2isRCOLOR2}
$\rca \vdash \rcolor_2 \biimp \lrcolor_2$.
\end{theorem}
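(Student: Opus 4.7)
The forward direction $\lrcolor_2 \imp \rcolor_2$ is immediate (take $X = V$). For the reverse, given a locally $2$-colorable graph $G = (V, E)$ and an infinite set $X = \{x_0 < x_1 < \cdots\} \subseteq V$, the plan is to build a computable, locally $2$-colorable graph $G^*$ from which an infinite $2$-homogeneous subset of $X$ for $G$ can be extracted, and to then apply $\rcolor_2$ to $G^*$.

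Take $V(G^*) = \Nb \cup A$, where $n \in \Nb$ codes $x_n$ and $A$ is a set of fresh auxiliary vertices. The edges of $G^*$ are (1) the direct edge $(i,j)$ whenever $(x_i, x_j) \in E$, and (2) for each pair $(i,j)$ first observed at some stage $s$ to be connected by an odd-length path in $G \restriction \{v_0, \ldots, v_s\}$, a new length-$3$ path $i, z^s_{i,j,1}, z^s_{i,j,2}, j$ through two fresh auxiliary vertices in $A$. The graph $G^*$ is computable (the bipartition of each finite bipartite graph is computable) and bipartite: any cycle of $G^*$ must traverse each auxiliary segment as a whole, since its internal vertices have degree $2$, and replacing each length-$3$ segment by its corresponding odd $G$-path preserves the parity of the cycle, so an odd $G^*$-cycle would yield an odd closed walk in $G$, impossible. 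The same parity-preserving replacement yields the \emph{odd-path correspondence}: for $a, b \in \Nb$, there is an odd-length $G^*$-path from $a$ to $b$ iff there is an odd-length $G$-path from $x_a$ to $x_b$.

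Applying $\rcolor_2$ to $G^*$ produces an infinite $2$-homogeneous set $H^*$. By $\rt^1_2$ (provable in $\rca$), either $H^* \cap \Nb$ is infinite or $H^* \cap A$ is infinite. In the first case, let $H = \{x_n : n \in H^* \cap \Nb\}$; the $2$-homogeneity of $H^*$ forbids odd $G^*$-paths among its members (otherwise no $2$-coloring of a suitable finite piece could give both endpoints the common color), and the correspondence transfers this to $G$, so $H$ is $2$-homogeneous for $G$. Otherwise $H^* \cap \Nb$ is finite and $H^* \cap A$ is infinite; each $z \in H^* \cap A$ lies in some auxiliary pair $(i,j)$ and, within its length-$3$ segment, is forced to the same side as exactly one of $x_i, x_j$, which we call $\mathrm{lift}(z)$. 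Let $L = \{\mathrm{lift}(z) : z \in H^* \cap A\} \subseteq X$. If $L$ is infinite, then enriching each finite test set $V_0^*$ with the $z \in H^*$ whose lift belongs to $V_0^*$ shows that $L$ itself is $2$-homogeneous for $G^*$, and thus for $G$. If $L$ is finite, enumerate it as $\{x_{i_1}, \ldots, x_{i_k}\}$; since $H^* \cap A$ is the union of the finitely many classes $\{z \in H^* \cap A : \mathrm{lift}(z) = x_{i_s}\}$, one such class is infinite, yielding pairs $(i_s, j_t)$ with distinct $j_t$, and a symmetric enrichment shows that $\{x_{j_t} : t \in \Nb\}$ is $2$-homogeneous for $G^*$ with the opposite color, hence for $G$.

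The delicate point is this last subcase: $\{x_{j_t}\}$ must be genuinely consistently colorable, which holds precisely because all the $x_{j_t}$ sit on the side of $G$'s global bipartition opposite to $x_{i_s}$, hence on the same side as each other, so no odd $G$-path can connect two of them and the odd-path correspondence transfers $2$-homogeneity from $G^*$ to $G$.
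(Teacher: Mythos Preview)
Your construction of $G^*$ and the odd-path correspondence are correct, and the overall strategy---replace odd $G$-connections among $X$ by length-$3$ gadgets, apply $\rcolor_2$, then decode---is exactly the paper's idea.  The extraction argument, however, has a genuine gap in the subcase where $L$ is finite.

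You write: ``enumerate $L$ as $\{x_{i_1},\dots,x_{i_k}\}$; since $H^*\cap A$ is the union of the finitely many classes $\{z:\mathrm{lift}(z)=x_{i_s}\}$, one such class is infinite.''  This is an application of $\rt^1_k$.  In $\rca$ one has $\rt^1_k$ for each \emph{standard} $k$, but $L$ is merely a bounded $\Sigma^0_1$ set, and in a non-$\omega$-model its cardinality $k$ may be non-standard.  For such $k$, the pigeonhole step is exactly $\bsig^0_2$, which $\rca$ does not prove.  So as written the argument only goes through over $\rca+\bsig^0_2$.

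The paper avoids this by never splitting into more than two classes.  After applying $\rcolor_2$ to the gadget graph, it shows that infinitely many $x\in X$ are connected (in the gadget graph) to the homogeneous set $H_0$, and then splits these $x$'s by the \emph{parity} of a witnessing path to $H_0$.  Since the gadget graph is bipartite, this parity is well-defined and computable once a witnessing path is found, so $\rt^1_2$ suffices.  Your argument can be repaired the same way: instead of partitioning $H^*\cap A$ by the value of $\mathrm{lift}$, observe that each $z\in H^*\cap A$ determines \emph{two} $\Nb$-endpoints, one at distance $1$ from $z$ and one at distance $2$; the union of all such endpoints is infinite (the pairs are distinct, so the larger coordinates are unbounded), and no endpoint can be at distance $1$ from one $z\in H^*$ and distance $2$ from another (that would force an odd path between two members of $H^*$).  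Hence a single $\rt^1_2$ split on ``distance $1$'' versus ``distance $2$'' yields an infinite set of $\Nb$-codes all on the same side, and you are done.

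Two minor points: in Case~2a you conflate $L\subseteq X$ with the set of codes $L'\subseteq\Nb$, and ``the $z\in H^*$ whose lift belongs to $V_0^*$'' may be infinite---but you only need one such $z$ per element of $L'\cap V_0^*$, together with its full segment.  Finally, the appeal to ``$G$'s global bipartition'' in the last paragraph is only heuristic; the bipartition need not exist as a set, though the parity-of-path argument you intend is fine.
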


\begin{proof}
$\rca \vdash \lrcolor_2 \imp \rcolor_2$ is clear.  We show that $\rca \vdash \rcolor_2 \imp \lrcolor_2$.

$\rca$ suffices to prove that a finite graph is $2$-colorable if and only if it does not contain an odd-length cycle.  Thus the condition that every finite subset of vertices of a graph induces a $2$-colorable subgraph is equivalent to the condition that the graph does not contain an odd-length cycle.  Moreover, if $G = (V,E)$ is a graph such that every finite subset of $V$ induces a $2$-colorable subgraph, then, for any $H \subseteq V$, every finite $V_0 \subseteq V$ induces a subgraph that is $2$-colorable by a coloring that colors every $v \in V_0 \cap H$ color $0$ if and only if no two elements of $H$ are connected by an odd-length path.  Thus, over $\rca$, we immediately have the following two equivalences:
\begin{itemize}
\item $\rcolor_2$ is equivalent to the statement ``for every infinite graph $G = (V,E)$, if $G$ does not contain an odd-length cycle, then there is an infinite $H \subseteq V$ such that no two vertices of $H$ are connected by an odd-length path.''

\item $\lrcolor_2$ is equivalent to the statement ``for every infinite graph $G = (V,E)$ and every infinite $X \subseteq V$, if $G$ does not contain an odd-length cycle, then there is an infinite $H \subseteq X$ such that no two vertices of $H$ are connected by an odd-length path.''
\end{itemize}

Let $G = (V,E)$ be an infinite graph that does not contain an odd-length cycle, and let $X \subseteq V$ be infinite.  If there is a bound $m$ such that 
$$
(\forall x,y \in X)(\text{$x$ and $y$ are connected by an odd-length path} \imp x,y < m),
$$ 
then we may take $H = \{x \in X : x > m\}$.  So suppose instead that there are infinitely many distinct pairs $(x,y)$ of vertices in $X$ that are connected by odd-length paths, let $((x_n,y_n))_{n \in \Nb}$ enumerate this collection of pairs, and let $(p_n)_{n \in \Nb}$ enumerate a collection of odd-length paths such that the endpoints of $p_n$ are $x_n$ and $y_n$.

Define a graph $G' = (V',E')$ by
\begin{align*}
V' &= X \cup \{a_n : n \in \Nb\} \cup \{b_n : n \in \Nb\}\\
E' &= \{(x, a_n), (a_n, b_n), (b_n, y) : x, y \in X \andd x < y \andd \text{ $x$ and $y$ are the endpoints of $p_n$}\}.
\end{align*}

$G'$ does not contain an odd-length cycle.  To see this, suppose for a contradiction that $G'$ does contain an odd-length cycle.  This cycle must be of the form 
\begin{align*}
x_0, c_{m_0}, d_{m_0}, x_1, c_{m_0}, d_{m_0}, x_2, \dots, x_{n-1}, c_{m_{n-1}}, d_{m_{n-1}}, x_0,
\end{align*}
where $n$ is odd and, for each $i < n$, $x_i \in X$ and $\{c_{m_i}, d_{m_i}\} = \{a_{m_i}, b_{m_i}\}$.  Thus, for each $i < n-1$, $p_{m_i}$ is an odd-length path with endpoints $x_i$ and $x_{i+1}$, and also $p_{m_{n-1}}$ is an odd-length path with endpoints $x_{n-1}$ and $x_0$.  Therefore the path in $G$ obtained by starting at $x_0$, following $p_{m_0}$ to $x_1$, following $p_{m_1}$ to $x_2$, and so on, finally following $p_{m_{n-1}}$ from $x_{n-1}$ back to $x_0$, is an odd-length cycle in $G$, a contradiction.

Hence by $\rcolor_2$, there is an infinite $H_0 \subseteq V'$ such that no two vertices of $H_0$ are connected by an odd-length path.  In $G'$, infinitely many vertices of $X$ are connected to $H_0$.  Clearly this holds if $X \cap H_0$ is infinite.  Otherwise, $H_0$ contains infinitely many vertices of the form $a_n$ or $b_n$, and these must be connected to infinitely many vertices in $X$ because 
\begin{align*}
(\forall m)(\exists n_0)(\forall n > n_0)(\text{some endpoint of $p_n$ is $> m$}),
\end{align*}
and therefore 
\begin{align*}
(\forall m)(\exists n_0)(\forall n > n_0)(\text{$a_n$ and $b_n$ are connected to an $x \in X$ with $x > m$}).
\end{align*}
Thus there is an infinite set $H \subseteq X$ such that, in $G'$, either every $x \in H$ is connected to a vertex in $H_0$ by an even-length path, or every $x \in H$ is connected to a vertex in $H_0$ by an odd-length path.  To finish the proof, we show that, in $G$, no two vertices in $H$ are connected by an odd-length path.  Suppose for a contradiction that $x,y \in H$ are connected by an odd-length path.  Then there is an $n$ such that $x$ and $y$ are the endpoints of $p_n$, and therefore $x$ and $y$ are connected by an odd-length path in $G'$ via the vertices $a_n$ and $b_n$.  Now, in $G'$, $x$ is connected to some $u \in H_0$, $y$ is connected to some $v \in H_0$, and the witnessing paths from $x$ to $u$ and from $y$ to $v$ either both have even length or both have odd length.  In either case, the path in $G'$ from $u$ to $x$ to $y$ to $v$ has odd length.  Thus $u$ and $v$ are two vertices in $H_0$ connected by an odd-length path in $G'$, which is a contradiction.
\end{proof}

\section{The strength of Ramsey-type graph 2-coloring}\label{sec-nonImp}

In this section, we prove various non-implications concerning $\rwkl$ and $\rcolor_2$.  The main result is that $\rca + \wwkl \nvdash \rcolor_2$ (Theorem~\ref{thm-WWKLDoesNotProveRCOLOR2}).  From this it follows that $\rca + \dnr \nvdash \rwkl$, which answers Flood's question of whether or not $\rca \vdash \dnr \imp \rwkl$ from~\cite{Flood2012}.  We also show that $\rca + \cac \nvdash \rcolor_2$ (Theorem~\ref{cac-not-imply-rcolor2}).  Note that it is immediate that $\rca + \cac \nvdash \rwkl$ because $\rca + \rwkl \vdash \dnr$ (by~\cite{Flood2012}) but $\rca + \cac \nvdash \dnr$ (by~\cite{hirschfeldt2007combinatorial}).  We do not know if $\rca \vdash \rcolor_2 \imp \dnr$, so we must give a direct proof that $\rca + \cac \nvdash \rcolor_2$.

In summary, the situation is thus.  $\wkl$ and $\rt^2_2$ each imply $\rwkl$ and therefore each imply $\rcolor_2$.  However, if $\wkl$ is weakened to $\wwkl$, then it no longer implies $\rcolor_2$.  Similarly, if $\rt^2_2$ is weakened to $\cac$, then it no longer implies $\rcolor_2$.

We begin our analysis of $\rcolor_2$ by constructing an infinite, recursive, bipartite graph with no infinite, recursive, homogeneous set.  It follows that $\rca \nvdash \rcolor_2$.  The graph we construct avoids potential infinite, r.e., homogeneous sets in a strong way that aids our proof that $\rca + \cac \nvdash \rcolor_2$.

\begin{definition}
Let $G = (V,E)$ be an infinite graph.  A set $W \subseteq V^2$ is \emph{column-wise homogeneous for $G$} if $W^{[x]}$ is infinite for infinitely many $x$ (where $W^{[x]} = \set{y : \tuple{x,y} \in W}$ is the $x$\textsuperscript{th} column of $W$), and $\forall x \forall y(y \in W^{[x]} \imp \text{$\set{x,y}$ is homogeneous for $G$})$.
\end{definition}

\begin{lemma}\label{lem-RCOLOR2notCE}
There is an infinite, recursive, bipartite graph $G = (\omega, E)$ such that no r.e.\ set is column-wise homogeneous for $G$.
\end{lemma}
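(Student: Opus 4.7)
The plan is to construct $G$ by a finite-extension priority argument, maintaining $G$ throughout as a matching on $\omega$, so that $G$ is automatically bipartite. For each $e$ the requirement $R_e$ reads: $W_e$ is not column-wise homogeneous for $G$. To meet $R_e$ I shall add an edge $\{x,y\}$ to $G$ for some $\langle x,y\rangle \in W_e$ with $x \neq y$; since this joins $x$ to $y$ by an odd-length path (of length $1$), the pair $\{x,y\}$ becomes non-homogeneous and spoils the second clause of column-wise homogeneity for $W_e$.

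The construction would proceed in stages. At stage $s+1$, for each $e \leq s$ in priority order, if $R_e$ is not yet satisfied, search $W_{e,s+1}$ for a pair $\langle x,y\rangle$ with $x \neq y$, with both $x$ and $y$ currently isolated in the matching, and with both $x,y$ above a recursive threshold $f(s)$ that governs how quickly new endpoints may be used. If such a pair is found, add the edge $\{x,y\}$ to $G$ and declare $R_e$ satisfied. To verify that each $R_e$ with $W_e$ column-wise homogeneous is eventually met, I would use that column-wise homogeneity forces $W_e$ to contain pairs $\langle x,y\rangle$ with both coordinates arbitrarily large and avoiding any prescribed finite set; in particular such pairs avoid the finite set of endpoints already used and exceed the current threshold, so some one of them is eventually enumerated into $W_e$ at a usable stage.

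For recursiveness of the edge set, to decide whether $\{u,v\}$ is an edge of $G$ it would suffice to simulate the construction up through the least stage $s$ with $f(s) \geq \max(u,v)$: the threshold forbids later stages from ever choosing $\{u,v\}$ as a new matched pair, so one only needs to examine finitely many stages, each of which is itself a finite search through an initial segment of the enumerations of $W_0,\dots,W_s$.

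The main obstacle I expect is the calibration of $f$ together with the priority structure: $f$ must grow quickly enough that edges on small vertices can be introduced only at early stages (this is what gives recursiveness of $E$), yet slowly enough that for every column-wise homogeneous $W_e$ the search above actually succeeds despite a possibly adversarial enumeration of the pairs in $W_e$. Reconciling these two demands, probably by giving $R_e$ a sequence of attempts and letting higher-priority actions raise the restraints only finitely often on each lower-priority requirement, is where the real work of the proof lies; the subsequent use in Theorem~\ref{cac-not-imply-rcolor2} also suggests that one wants to build $G$ with enough additional freedom that the diagonalization can later be relativized to a generic $\cac$-instance.
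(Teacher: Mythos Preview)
Your approach has a fatal flaw that is independent of the threshold issue you flag: \emph{no recursive matching can satisfy the lemma}. If $G=(\omega,E)$ is any recursive matching, then the set
\[
W \;=\; \{\langle x,y\rangle : x\neq y \text{ and } \{x,y\}\notin E\}
\]
is recursive (hence r.e.), every column $W^{[x]}$ is cofinite (since $x$ has at most one neighbour in a matching), and every pair $\langle x,y\rangle\in W$ gives a $G$-homogeneous set $\{x,y\}$, because in a matching the only odd-length paths are the edges themselves. Thus $W$ is column-wise homogeneous for $G$, no matter how the matching was built. The underlying obstruction is that in a matching, homogeneity of a pair is \emph{decidable}, so the homogeneous pairs form an r.e.\ (indeed recursive) set with large columns.

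The paper's construction avoids both this problem and your threshold problem with one device. When $R_e$ finds a suitable $\langle x,y\rangle$ in $W_e$, it does not add the edge $\{x,y\}$; instead it chooses fresh isolated vertices $u,v>s$ and adds the length-$3$ path $x,u,v,y$. Every new edge then has an endpoint exceeding the current stage, so $E$ is recursive with no calibration needed; and $\{x,y\}$ becomes inhomogeneous even though $x,y$ may be small, so there is no race between the enumeration of $W_e$ and a growing threshold. The conditions the paper imposes on $x,y$ (namely $e<x<y$, $x$ and $y$ not already connected, and neither connected to a vertex $\le e$) are there only to keep the graph acyclic and to protect higher-priority requirements---they play no role in the recursiveness of $E$. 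If you want to salvage your plan, the minimal fix is exactly this: replace ``add the edge $\{x,y\}$'' by ``add a short odd path from $x$ to $y$ through fresh large vertices''; then the threshold $f$ becomes unnecessary and the matching objection disappears.
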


\begin{proof}
The construction proceeds in stages, starting at stage $0$ with $E = \emptyset$.  We say that \emph{$W_e$ requires attention at stage $s$} if $e < s$ and there is a least pair $\tuple{x,y}$ such that
\begin{itemize}
\item $e < x < y < s$,
\item $y \in W_{e,s}^{[x]}$,
\item $x$ and $y$ are not connected to each other, and
\item neither $x$ nor $y$ is connected to a vertex $\leq e$.
\end{itemize}
At stage $s$, let $e$ be least such that $W_e$ requires attention at stage $s$ and has not previously received attention.  $W_e$ then receives attention by letting $\tuple{x,y}$ witness that $W_e$ requires attention at stage $s$, letting $u$ and $v$ be the least isolated vertices $> s$, and adding the edges $(x,u)$, $(u,v)$, and $(v,y)$ to $E$.  This completes the construction.

We verify the construction.  We first show that $G$ is acyclic by showing that it is acyclic at every stage.  It follows that $G$ is bipartite because a graph is bipartite if and only if it has no odd cycles.  All vertices are isolated at the beginning of stage $0$, hence $G$ is acyclic at the beginning of stage~$0$.  By induction, suppose that $G$ is acyclic at the beginning of stage~$s$.  If no $W_e$ requires attention at stage~$s$, then no edge is added at stage $s$, hence $G$ is acyclic at the beginning of stage~$s+1$.  If some least $W_e$ requires attention at stage $s$, then during stage~$s$ we add a length-$3$ path connecting the connected components of the $x$ and $y$ such that $\tuple{x,y}$ witnesses that $W_e$ requires attention at stage~$s$.  This action does not add a cycle because by the definition of requiring attention,~$x$ and~$y$ are not connected at the beginning of stage~$s$.  Hence $G$ is acyclic at the beginning of stage~$s+1$.

We now show that, for every $e$, if there are infinitely many $x$ such that $W_e^{[x]}$ is infinite, then there are an~$x$ and a~$y$ with $y \in W_e^{[x]}$ and $\set{x,y}$ not homogeneous for $G$.  If $W_e$ receives attention, then there is a length-$3$ path between an~$x$ and a~$y$ with $y \in W_e^{[x]}$, in which case $\set{x,y}$ is not homogeneous for $G$.  Thus it suffices to show that if $W_e^{[x]}$ is infinite for infinitely many~$x$, then $W_e$ requires attention at some stage.

Suppose that $W_e^{[x]}$ is infinite for infinitely many $x$, and suppose for a contradiction that $W_e^{[x]}$ never requires attention.  Let $s_0$ be a stage by which every $W_i$ for $i < e$ that ever requires attention has received attention.  The graph contains only finitely many edges at each stage, so let $x_0$ be an upper bound for the vertices that are connected to the vertices $\leq e$ at stage $s_0$.  Notice that when some $W_i$ receives attention, the vertices connected at that stage are not connected to vertices $\leq i$.  Therefore once all the $W_i$ for $i < e$ that ever require attention have received attention, no vertex that is not connected to a vertex $\leq e$ is ever connected to a vertex $\leq e$.  In particular, no vertex $\geq x_0$ is ever connected to a vertex $\leq e$.  Now let $x > x_0$ be such that $W_e^{[x]}$ is infinite, and let $s_1 > s_0$ be a stage by which every $W_i$ for $i < x$ that ever requires attention has received attention.  Let $y_0$ be an upper bound for the vertices that are connected to $x$ and the vertices $\leq e$ at stage $s_1$, and again note that no vertex $\geq y_0$ is ever connected to $x$ or a vertex $\leq e$.  As $W_e^{[x]}$ is infinite, let $s > s_1$ be a stage at which there is a $y > y_0$ with $x < y < s$ and $y \in W_{e,s}^{[x]}$.  This $y$ is not connected to $x$, and neither $x$ nor $y$ is connected to a vertex $\leq e$, so $W_e$ requires attention at stage $s$, a contradiction.
\end{proof}

\begin{proposition}\label{prop-RCADoesNotProveRCOLOR2}
$\rca \nvdash \rcolor_2$.
\end{proposition}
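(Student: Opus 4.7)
The plan is to exhibit an $\omega$-model of $\rca$ in which $\rcolor_2$ fails, namely the model $\s{REC}$ consisting of all recursive subsets of $\omega$. Since $\s{REC}$ is a well-known model of $\rca$, it suffices to produce a recursive instance of $\rcolor_2$ that admits no recursive solution, and the graph $G = (\omega, E)$ constructed in Lemma~\ref{lem-RCOLOR2notCE} is the natural candidate: it is recursive and bipartite, hence locally $2$-colorable, so it qualifies as an $\rcolor_2$-instance in $\s{REC}$.

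The main task is therefore to verify that $G$ has no recursive infinite $2$-homogeneous set. The key observation bridging Lemma~\ref{lem-RCOLOR2notCE} to this goal is the connection between $2$-homogeneity and the absence of odd-length paths between pairs of vertices, which is precisely the reformulation used in the proof of Theorem~\ref{thm-LRCOLOR2isRCOLOR2}: an infinite set $H$ is $2$-homogeneous for a graph with no odd cycle if and only if no two of its elements are joined by an odd-length path, which is exactly the condition that $\{x,y\}$ is homogeneous for $G$ for every $x,y \in H$.

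Suppose for contradiction that $H \subseteq \omega$ is an infinite recursive set that is $2$-homogeneous for $G$. Then the set $W = H \times H \subseteq \omega^2$ is recursive (in particular r.e.), and for every $x \in H$ the column $W^{[x]} = H$ is infinite, so $W$ has infinitely many infinite columns. Moreover, for every $\langle x,y \rangle \in W$ both $x$ and $y$ lie in $H$, so by the observation above $\{x,y\}$ is homogeneous for $G$. Hence $W$ is an r.e.\ set that is column-wise homogeneous for $G$, directly contradicting Lemma~\ref{lem-RCOLOR2notCE}.

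There is no real obstacle here: the construction in Lemma~\ref{lem-RCOLOR2notCE} was already tailored to defeat r.e.\ column-wise homogeneous sets, which is a strictly stronger diagonalization than what is needed to rule out recursive $2$-homogeneous sets. Once the contradiction is drawn, we conclude that $G \in \s{REC}$ witnesses the failure of $\rcolor_2$ in $\s{REC}$, and therefore $\rca \nvdash \rcolor_2$. (The same argument will also be useful for the subsequent separations in this section, since the graph $G$ from Lemma~\ref{lem-RCOLOR2notCE} is the uniform counterexample we plan to place inside stronger models such as models of $\wwkl$ or $\cac$.)
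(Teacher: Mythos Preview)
Your proposal is correct and follows essentially the same approach as the paper: work in the $\omega$-model $\s{REC}$, take the graph $G$ from Lemma~\ref{lem-RCOLOR2notCE}, and observe that an infinite recursive $2$-homogeneous set $H$ would yield the recursive (hence r.e.) column-wise homogeneous set $\{\la x,y\ra : x,y \in H\}$, contradicting that lemma. The paper's proof is just a terser version of exactly this argument.
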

\begin{proof}
Consider the $\omega$-model of $\rca$ whose second-order part consists of exactly the recursive sets.  The graph $G$ from Lemma~\ref{lem-RCOLOR2notCE} is in the model because $G$ is recursive.  However, the model contains no homogeneous set for $G$ because if $H$ were an infinite, recursive, homogeneous set, then $\{\la x, y \ra : x, y \in H\}$ would be a recursive, column-wise homogeneous set, thus contradicting Lemma~\ref{lem-RCOLOR2notCE}.
\end{proof}

The notion of \emph{restricted $\Pi^1_2$ conservativity} helps separate Ramsey-type weak K\"{o}nig's lemma and the Ramsey-type coloring principles from the following weak principles.

\begin{itemize}
\item $\coh$ (\emph{cohesiveness}; see Definition~\ref{def-COH}).
\item $\crt^2_2$ (\emph{cohesive Ramsey's theorem for pairs and two colors}; see~\cite{hirschfeldt2007combinatorial} for the definition).
\item $\cads$ (\emph{cohesive ascending or descending sequence}; see~\cite{hirschfeldt2007combinatorial} for the definition).
\item $\pizog$ (\emph{$\Pi^0_1$-generic}; see~\cite{hirschfeldt2009atomic} for the definition).
\item $\amt$ (\emph{atomic model theorem}; see~\cite{hirschfeldt2009atomic} for the definition).
\item $\opt$  (\emph{omitting partial types}; see~\cite{hirschfeldt2009atomic} for the definition).
\item $\fip$ (\emph{finite intersection principle}; see~\cite{Dzhafarov2011} for the definition).
\item $\ndtip$ (\emph{$\bar{D}_2$ intersection principle}; see~\cite{Dzhafarov2011} for the definition).
\end{itemize}

\begin{definition}[see \cite{hirschfeldt2007combinatorial,hirschfeldt2009atomic}]{\ }
\begin{itemize}
\item A sentence is \emph{restricted $\Pi^1_2$} if it is of the form $\forall A(\Theta(A) \imp \exists B(\Phi(A,B)))$, where $\Theta$ is arithmetic
and $\Phi$ is $\Sigma^0_3$.

\item A theory $T$ is \emph{restricted $\Pi^1_2$ conservative} over a theory $S$ if $S \vdash \varphi$ whenever $T \vdash \varphi$ and $\varphi$ is restricted $\Pi^1_2$.
\end{itemize}
\end{definition}

\begin{theorem}\label{thm-rPi12conservative}{\ }
\begin{itemize}
\item \textup{(\cite{hirschfeldt2007combinatorial})} $\rca + \coh$ is restricted $\Pi^1_2$ conservative over $\rca$.
\item \textup{(\cite{hirschfeldt2009atomic})} $\rca + \pizog$ is restricted $\Pi^1_2$ conservative over $\rca$.
\end{itemize}
\end{theorem}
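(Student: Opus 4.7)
The plan is to establish each conservation result by the standard contrapositive, $\omega$-model-extension argument. Suppose $\varphi \equiv \forall A(\Theta(A) \imp \exists B\, \Phi(A,B))$ is restricted $\Pi^1_2$ (so $\Theta$ is arithmetic and $\Phi$ is $\Sigma^0_3$) and that $\rca \nvdash \varphi$. Fix a countable $\omega$-model $\mathcal{M}$ of $\rca$ together with a distinguished $A \in \mathcal{M}$ such that $\mathcal{M} \models \Theta(A) \andd \forall B\, \neg\Phi(A,B)$. The goal is to extend $\mathcal{M}$ to a countable $\omega$-model $\mathcal{M}_\infty$ of $\rca + \coh$ (respectively $\rca + \pizog$) with the same first-order part in which still no $B$ satisfies $\Phi(A,B)$. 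Because $\Phi(A,B)$ is arithmetical in the fixed set parameters $A,B$, its truth value depends only on the first-order part, so such an extension yields a model of $\rca + \coh + \neg\varphi$ (resp.\ $\rca + \pizog + \neg\varphi$), contradicting the assumed provability.

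Next, I would build $\mathcal{M}_\infty = \bigcup_i \mathcal{M}_i$ as a chain of Turing-closed countable extensions. Using a bookkeeping enumeration of all sequences $\vec R$ that will eventually appear in $\mathcal{M}_\infty$ (respectively of all $\Pi^0_1$-classes definable over $\mathcal{M}_\infty$), at stage $i$ I would adjoin a carefully chosen $\vec R$-cohesive set $C$ (respectively a sufficiently $\Pi^0_1$-generic $C$) and then close under Turing reducibility and joins with the sets already present. The delicate point is that this Turing closure must avoid introducing a new $B$ for which $\Phi(A,B)$ holds.

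The key preservation lemma is therefore: for every $\vec R \in \mathcal{M}_i$ there is an $\vec R$-cohesive $C$ such that no $B$ Turing-computable from $C$ together with some $D \in \mathcal{M}_i$ satisfies $\Phi(A,B)$, and analogously for $\Pi^0_1$-generics. For $\coh$, one would prove this via Mathias forcing, with conditions $(F, X)$ consisting of a finite stem $F$ and an infinite reservoir $X \in \mathcal{M}_i$. Since $\Phi$ is $\Sigma^0_3$, for each Turing functional the set of conditions forcing the functional applied to $C \oplus D$ to output a set satisfying $\Phi(A, \cdot)$ is arithmetically describable, and a density argument exploiting the classical low-jump control of cohesive sets allows us to avoid every such potential witness. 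For $\pizog$, the parallel argument uses descending sequences of $\Pi^0_1$-classes as forcing conditions; genericity together with the bounded arithmetical complexity of $\Phi$ again defeats every candidate witness.

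The main obstacle is exactly this preservation lemma, and the hypothesis that $\Phi$ be $\Sigma^0_3$ rather than an arbitrary second-order formula is essential: neither Mathias forcing for $\coh$ nor $\Pi^0_1$-forcing for $\pizog$ preserves general $\Sigma^1_1$ properties, so the argument does not extend to full $\Pi^1_2$ conservativity. The detailed verifications---that the iteration can be arranged to yield genuine $\coh$- and $\pizog$-solutions, and that density at every $\Sigma^0_3$ requirement is attainable within the chosen forcing---are carried out in \cite{hirschfeldt2007combinatorial} and \cite{hirschfeldt2009atomic} respectively.
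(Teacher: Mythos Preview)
The paper does not prove this theorem; it is quoted from \cite{hirschfeldt2007combinatorial} and \cite{hirschfeldt2009atomic}. Your overall strategy---iterated forcing to add $\coh$- (resp.\ $\pizog$-) solutions while preserving the failure of $\exists B\,\Phi(A,B)$---is the one used in those papers, but your opening move has a genuine gap.

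From $\rca \nvdash \varphi$ you may only conclude that some countable model of $\rca$ fails $\varphi$, not that some countable $\omega$-model does. Concretely, $\rca \nvdash \mathrm{Con}(\rca)$, yet every $\omega$-model of $\rca$ satisfies $\mathrm{Con}(\rca)$; so for $\varphi = \mathrm{Con}(\rca)$ your first step already fails. As written, your argument would at best show that if $\rca + \coh \vdash \varphi$ then $\varphi$ holds in every $\omega$-model of $\rca$, which is strictly weaker than $\rca \vdash \varphi$. The cited proofs instead start from an arbitrary countable model $\mathcal{M}$ of $\rca$ with possibly non-standard first-order part, carry out the forcing with conditions drawn from the second-order part of $\mathcal{M}$, and verify the preservation lemma and closure under $\Delta^0_1$-comprehension internally over that first-order part. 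The point you correctly isolate---that the truth of the arithmetical formula $\Phi(A,B)$ depends only on the first-order part---is exactly what makes the extension argument go through once non-standard models are permitted.
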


$\rcolor_2$ is a restricted $\Pi^1_2$ sentence, so we immediately have that neither $\coh$ nor $\pizog$ implies $\rcolor_2$ over $\rca$.  Consequently, over $\rca$, the following principles are all incomparable with $\rwkl$ and with $\rcolor_2$:  $\coh$, $\crt^2_2$, $\cads$, $\pizog$, $\amt$, $\opt$, $\fip$, and $\ndtip$.

\begin{theorem}
$\rwkl$ is incomparable with each of $\coh$, $\crt^2_2$, $\cads$, $\pizog$, $\amt$, $\opt$, $\fip$, and $\ndtip$ over $\rca$.  $\rcolor_2$ is incomparable with these principles over $\rca$ as well.
\end{theorem}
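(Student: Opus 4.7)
The plan handles each listed principle $P$ by establishing the two halves of the incomparability separately. Because $\rca \vdash \rwkl \imp \rcolor_2$ by Lemma~\ref{RWKLprovesLRCOLOR}, it suffices to prove, for each $P$, that $\rca + P \nvdash \rcolor_2$ (which yields $\rca + P \nvdash \rwkl$) and $\rca + \rwkl \nvdash P$ (which yields $\rca + \rcolor_2 \nvdash P$).

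For the first direction, I would observe that both $\rwkl$ and $\rcolor_2$ are restricted $\Pi^1_2$ sentences: each hypothesis (being an infinite subtree of $2^{<\Nb}$, respectively an infinite locally $2$-colorable graph) is arithmetic, and the existence of the required infinite homogeneous set is $\Sigma^0_3$. Combined with Proposition~\ref{prop-RCADoesNotProveRCOLOR2}, which gives $\rca \nvdash \rcolor_2$, this reduces the first direction to the claim that $\rca + P$ is restricted $\Pi^1_2$ conservative over $\rca$ for each listed $P$. Theorem~\ref{thm-rPi12conservative} supplies this for $\coh$ and $\pizog$ directly; for $\crt^2_2$ and $\cads$ it follows because both are provable from $\coh$ over $\rca$, so any proof of $\rcolor_2$ from them would lift to a proof from $\coh$, contradicting Theorem~\ref{thm-rPi12conservative}; and for $\amt$, $\opt$, $\fip$, and $\ndtip$ the analogous conservativity is established in the cited references (\cite{hirschfeldt2009atomic,Dzhafarov2011}), several of these reducing ultimately to the $\pizog$ case.

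For the second direction, I would use the trivial $\rca \vdash \wkl \imp \rwkl$ and reduce to the known independence of each listed principle from $\wkl$ over $\rca$; these separations are standard results in the cited references. The main obstacle here is not a single hard step but bookkeeping: each $P$ must be matched with the correct existing conservativity and non-implication result from the literature. The one piece of content genuinely owed to this paper is the easy syntactic verification that $\rwkl$ and $\rcolor_2$ are themselves restricted $\Pi^1_2$ sentences, after which the theorem is assembled by combining Proposition~\ref{prop-RCADoesNotProveRCOLOR2}, Theorem~\ref{thm-rPi12conservative}, and the cited independence results.
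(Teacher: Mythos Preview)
Your proposal is correct and follows essentially the same approach as the paper. The paper streamlines the bookkeeping slightly by first recording the implication chains $\coh \imp \crt^2_2 \imp \cads$ and $\pizog \imp \amt \imp \opt$, $\pizog \imp \fip \imp \ndtip \imp \opt$, so that only $\coh$ and $\pizog$ need be handled for the conservativity direction and only $\cads$ and $\opt$ for the $\wkl$-nonimplication direction; but the content is identical to what you outline.
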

\begin{proof}
Over $\rca$, we have the implications $\coh \imp \crt^2_2 \imp \cads$~\cite{cholak2001strength,hirschfeldt2007combinatorial}, $\pizog \imp \amt \imp \opt$~\cite{hirschfeldt2009atomic}, and $\pizog \imp \fip \imp \ndtip \imp \opt$~\cite{Dzhafarov2011}.  Thus we need only show that neither $\rca + \coh$ nor $\rca + \pizog$ prove $\rcolor_2$ and that $\rca + \rwkl$ proves neither $\cads$ nor $\opt$.  Observe that $\rcolor_2$ is a restricted $\Pi^1_2$ sentence, so we have that neither $\rca + \coh$ nor $\rca + \pizog$ proves $\rcolor_2$ by Proposition~\ref{prop-RCADoesNotProveRCOLOR2} and Theorem~\ref{thm-rPi12conservative}.  $\rca + \rwkl$ proves neither $\cads$ nor $\opt$ because $\rca + \wkl$ proves $\rca + \rwkl$ and $\rca + \wkl$ proves neither $\cads$~\cite{hirschfeldt2007combinatorial} nor $\opt$~\cite{hirschfeldt2009atomic}.
\end{proof}

We now adapt the proof that $\rca + \cac \nvdash \dnr$ in~\cite{hirschfeldt2007combinatorial} to prove that $\rca + \cac \nvdash \rcolor_2$.  We build an $\omega$-model of $\rca + \scac + \coh$ that is not a model of $\rcolor_2$ by alternating between adding chains or antichains to stable partial orders and adding cohesive sets without ever adding an infinite set homogeneous for the graph from Lemma~\ref{lem-RCOLOR2notCE}.

\begin{lemma}\label{lem-SCACForcing}
Let $X$ be a set, let $G = (V, E)$ be a graph recursive in $X$ such that no column-wise homogeneous set for $G$ is r.e.\ in $X$, and let $P = (P, \leq_P)$ be an infinite, stable partial order recursive in $X$.  Then there is an infinite $C \subseteq P$ that is either a chain or an antichain such that no column-wise homogeneous set for $G$ is r.e.\ in $X \oplus C$.
\end{lemma}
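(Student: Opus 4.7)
\medskip

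\noindent\textbf{Proof proposal.} My plan is to build $C$ by a Mathias-style forcing argument, exploiting the stability of $P$ to reduce to one of two symmetric cases and using a win-win diagonalization against each potential r.e.\ functional.

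First, I would use the stability of $P$ to partition $P$ into the \emph{small} elements $S = \{i \in P : (\forall^\infty j \in P)(i \leq_P j)\}$ and the \emph{large} elements $L = \{i \in P : (\forall^\infty j \in P)(i \mid_P j)\}$.  Both $S$ and $L$ are $\Sigma^{0,X}_2$, and by stability their union is cofinite in $P$, so at least one of $S$, $L$ is infinite.  Assume $S$ is infinite (the $L$ case is symmetric and yields an antichain); I will build an infinite chain $C \subseteq S \cap P$.  Although $S$ need not be recursive in $X$, it suffices to work with infinite $X$-recursive subsets of $P$ that \emph{behave like} subsets of $S$ on a fixed initial segment, which is the standard way of handling $\Sigma^{0,X}_2$ reservoirs in a recursive forcing.

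The forcing has conditions $(\sigma, R)$ where $\sigma$ is a finite chain in $P$, $R$ is an infinite set recursive in $X$, $R \subseteq P$, and every element of $R$ lies strictly above every element of $\sigma$ in $\leq_P$.  Extension is $(\tau, R') \leq (\sigma, R)$ if $\sigma \preceq \tau$, $\tau \setminus \sigma \subseteq R$, and $R' \subseteq R$; a sufficiently generic filter yields an infinite chain $C \subseteq P$.  It remains to meet, for each index $e$, a dense set $D_e$ of conditions forcing $W_e^{X \oplus C}$ not to be column-wise homogeneous for $G$.  Note that homogeneity of a pair $\{x,y\}$ for $G$ is decidable from $X$, so we may restrict to enumerations into $U = \{\la x,y\ra : \{x,y\}$ is $G$-homogeneous$\}$.

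The main lemma — and the principal obstacle — is the following combinatorial dichotomy.  Fix a condition $(\sigma, R)$ and an index $e$.  I claim that either (a) there is an extension $(\tau, R')$ and a bound $b$ forcing that $(W_e^{X \oplus C})^{[x]} = \emptyset$ for all $x > b$ (so $W_e^{X \oplus C}$ has only finitely many nonempty columns), or else (b) we can define an r.e.-in-$X$ set that is column-wise homogeneous for $G$.  For (b), suppose (a) fails; then for every valid extension $\tau$ of $\sigma$ in $R$ and every bound $b$, some further extension $\tau'$ causes $W_e^{X \oplus \tau'}$ to enumerate a pair $\la x, y \ra \in U$ with $x > b$.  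Enumerate $\widetilde{W}$ by putting $\la x, y \ra$ into $\widetilde{W}$ whenever there is a finite chain $\tau$ with $\sigma \preceq \tau$, $\tau \setminus \sigma \subseteq R$, such that $\la x, y \ra \in U$ is enumerated by $W_e^{X \oplus \tau}$.  This $\widetilde{W}$ is r.e.\ in $X$ and contained in $U$; the failure of (a) forces $\widetilde{W}$ to have pairs in unboundedly many columns, and a pigeonhole refinement (iterating the failure of (a) to pump a single column to arbitrary size while preserving the extension) shows that infinitely many columns of $\widetilde{W}$ are in fact infinite, contradicting the hypothesis on $G$.  Hence alternative (a) holds, and extending $\sigma$ to $\tau$ puts us into $D_e$.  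Iterating through all $e$ and diagonalizing produces the desired chain $C$ such that no $W_e^{X \oplus C}$ is column-wise homogeneous for $G$.

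The hard part will be making the pumping argument in case (b) precise: one must iterate the failure of (a) carefully so that the resulting $\widetilde{W}$ really has infinitely many infinite columns (rather than just unboundedly many nonempty ones), using that each time we enlarge the chain $\tau$ inside $R$ we can demand the enumeration of a fresh pair in a target column of our choosing.
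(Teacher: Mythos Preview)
There are two genuine gaps. First, homogeneity of a pair $\{x,y\}$ for $G$ is not decidable from $X$: it is only $\Pi^{0,X}_1$ (for a bipartite $G$ it says there is no odd-length path from $x$ to $y$, a co-r.e.\ condition). Hence restricting your $\widetilde{W}$ to the set $U$ of homogeneous pairs destroys its r.e.-ness in $X$. Second, and more fundamentally, the pumping argument in case~(b) cannot work: the failure of~(a) says only that from every extension and every bound $b$ some further extension enumerates a pair in \emph{some} column $x>b$ --- it does not let you choose which column. Iterating produces one pair in each of infinitely many distinct columns, never an infinite column, so you cannot conclude that $\widetilde{W}$ is column-wise homogeneous. (There is also a third, smaller issue: you never explain how to obtain $X$-recursive reservoirs $R$ whose intersection with the $\Sigma^{0,X}_2$ set $S$ of small elements stays infinite.)

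The paper's argument is organized differently and avoids all three problems at once. It first assumes there is no $X$-recursive infinite antichain in $P$ (else done); then the set $U$ of small elements is infinite, and one forces with finite $<$- and $\leq_P$-increasing strings from $U$. Rather than trying to force a negative outcome, one argues by contradiction: suppose $W_e^H$ \emph{is} column-wise homogeneous for the generic $H$, and fix a condition $\sigma$ forcing that every enumerated pair is $G$-homogeneous. Now define a partial $X$-recursive $\tau(x,i)$ that searches over increasing strings in $P^{<\omega}$ (not $U^{<\omega}$) extending $\sigma$ that make column $x$ have size $>i$. Because $W_e^H$ already has infinitely many infinite columns, $\tau(x,i)$ is defined for all $i$ for infinitely many $x$ --- this is what replaces your pumping. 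If infinitely many $\tau(x,i)$ enumerate an inhomogeneous pair, those strings cannot lie in the forcing poset (by choice of $\sigma$), so their last entries lie in $P\setminus U$ and one reads off an $X$-recursive infinite antichain; otherwise, past some bound the $\tau(x,i)$ enumerate only homogeneous pairs and directly yield an $X$-r.e.\ column-wise homogeneous set. Searching over all of $P$ rather than over valid conditions is exactly what keeps everything r.e.\ in $X$ while still letting the forcing condition $\sigma$ do its work.
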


\begin{proof}
For simplicity, assume that $X$ is recursive.  The proof relativizes to non-recursive $X$.  As $P$ is stable, assume for the sake of argument that $P$ satisfies $(\forall i \in P)(\exists s)[(\forall j > s)(j \in P \imp i \leq_P j) \orr (\forall j > s)(j \in P \imp i \mid_P j)]$.  The case with $\geq_P$ in place of $\leq_P$ is symmetric.  Also assume that there is no recursive, infinite antichain $C \subseteq P$, for otherwise we are done.

Let $U = \set{i \in P : (\exists s)(\forall j > s)(j \in P \imp i \leq_P j)}$.  The fact that there is no recursive, infinite antichain in $P$ implies that $U$ is infinite.  Let $F = (F, \sqsubseteq)$ be the partial order consisting of all $\sigma \in U^{<\omega}$ that are increasing in both $<$ and $\leq_P$, where $\tau \sqsubseteq \sigma$ if $\tau \succeq \sigma$.  Let $H$ be sufficiently generic for $F$, and notice that $H$ (or rather, the range of $H$, which is computable from $H$ as $H$ is increasing in $<$) is an infinite chain in $P$.  Suppose for a contradiction that $W_e^H$ is column-wise homogeneous for $G$.  Fix a $\sigma \preceq H$ such that
\begin{align*}
\sigma \Vdash \forall x \forall y (y \in (W_e^H)^{[x]} \imp \text{$\set{x,y}$ is homogeneous for $G$}).
\end{align*}
Define a partial computable function $\tau \colon \omega^2 \imp P^{<\omega}$ by letting $\tau(x,i) \in P^{<\omega}$ be the string with the least code such that $\tau(x,i) \supseteq \sigma$, that $\tau(x,i)$ is increasing in both $<$ and $\leq_P$, and that $|(W_e^{\tau(x,i)})^{[x]}| > i$.  From here there are two cases.

Case~1: There are infinitely many pairs $\tuple{x,i}$ such that $\tau(x,i)$ is defined and there is a $y \in (W_e^{\tau(x,i)})^{[x]}$ with $\set{x,y}$ not homogeneous for $G$.  The last element of such a $\tau(x,i)$ is in $P \setminus U$ because otherwise $\tau(x,i) \in F$ and $\tau(x,i) \preceq \sigma$, contradicting that $\sigma \Vdash \forall x \forall y (y \in (W_e^H)^{[x]} \imp \text{$\set{x,y}$ is homogeneous for $G$})$.  Thus the set $C$ consisting of the last elements of such strings $\tau(x,i)$ is an infinite r.e.\ subset of $P \setminus U$.  As elements $i$ of $P \setminus U$ have the property $(\exists s)(\forall j > s)(j \in P \imp i \mid_P j)$, we can thin $C$ to an infinite r.e.\ antichain in $P$ and hence to an infinite recursive antichain in $P$, a contradiction.

Case~2: There are finitely many pairs $\tuple{x,i}$ such that $\tau(x,i)$ is defined and there is a $y \in (W_e^{\tau(x,i)})^{[x]}$ with $\set{x,y}$ not homogeneous for $G$.  In this case, let $x_0$ be such that if $x > x_0$ and $\tau(x,i)$ is defined, then $(\forall y \in (W_e^{\tau(x,i)})^{[x]})(\text{$\set{x,y}$ is homogeneous for $G$})$.  Notice that if $|(W_e^H)^{[x]}| > i$, then there is a $\tau$ with $\sigma \preceq \tau \preceq H$ such that $|(W_e^\tau)^{[x]}| > i$.  Hence if $(W_e^H)^{[x]}$ is infinite, then $\tau(x,i)$ is defined for all $i$.  Thus let 
\begin{align*}
W = \set{\tuple{x, \max(W_e^{\tau(x,i)})^{[x]}} : x > x_0 \andd i \in \omega \andd \text{$\tau(x,i)$ is defined}}.
\end{align*}
Then $W$ is an r.e.\ set that is column-wise homogeneous for $G$, a contradiction.

Thus there is no column-wise homogeneous set for $G$ that is r.e.\ in $H$.  Therefore (the range of) $H$ is our desired chain $C$.
\end{proof}

\begin{lemma}\label{lem-COHForcing}
Let $X$ be a set, let $G = (V, E)$ be a graph recursive in $X$ such that no column-wise homogeneous set for $G$ is r.e.\ in $X$, and let $\vec R = (R_i)_{i \in \omega}$ be a sequence of sets uniformly recursive in $X$.  Then there is an infinite set $C$ that is cohesive for $\vec R$ such that no column-wise homogeneous set for $G$ is r.e.\ in $X \oplus C$.
\end{lemma}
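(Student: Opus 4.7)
The plan is to adapt the Mathias-forcing construction in the proof of Lemma~\ref{lem-SCACForcing} to the cohesion setting. Assume $X$ is recursive; the argument relativizes. A \emph{condition} is a pair $(F, Y)$ with $F$ a finite set, $Y$ an infinite recursive set, and $\max F < \min Y$; an extension $(F', Y') \leq (F, Y)$ satisfies $F \subseteq F'$, $F' \setminus F \subseteq Y$, and $Y' \subseteq Y$. The set $C$ will be $\bigcup_n F_n$ for a sufficiently generic descending sequence of conditions $(F_n, Y_n)$.

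I handle three kinds of requirements in the standard interleaved fashion: (i) to ensure $C$ is infinite, occasionally add the least element of $Y_n$ to $F_n$; (ii) to ensure $\vec{R}$-cohesiveness, at stage $i$ replace $Y_n$ by whichever of $Y_n \cap R_i$ or $Y_n \cap \overline{R_i}$ is infinite (possible since $R_i$ is recursive); (iii) for each $e$, extend $(F_n, Y_n)$ to force that $W_e^{X \oplus C}$ is not column-wise homogeneous for $G$.

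The heart of the argument is (iii). Given $(F, Y)$ and $e$, search for a finite $F^* \supseteq F$ with $F^* \setminus F \subseteq Y$ and a pair $\tuple{x, y} \in W_e^{X \oplus F^*}$ (where $F^*$ is interpreted as a finite oracle, as in the proof of Lemma~\ref{lem-SCACForcing}) such that $\set{x, y}$ is not homogeneous for $G$. If such $F^*$ exists, extend to $(F^*, Y \setminus \set{0, 1, \ldots, \max F^*})$: any $C$ compatible with this condition has $\tuple{x, y} \in W_e^{X \oplus C}$ while $\set{x, y}$ is not homogeneous, killing column-wise homogeneity of $W_e^{X \oplus C}$. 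If no such $F^*$ exists, let $W = \set{\tuple{x, y} : \exists F^* \supseteq F,\, F^* \setminus F \subseteq Y,\, \tuple{x, y} \in W_e^{X \oplus F^*}}$. Then $W$ is r.e.\ in $X$, every pair in $W$ has $\set{x, y}$ homogeneous for $G$ by the failure of the search, and $W_e^{X \oplus C} \subseteq W$ for every $C$ compatible with $(F, Y)$ by a standard use-bound argument. Now split: if $W^{[x]}$ is infinite for infinitely many $x$, then $W$ is itself an r.e.-in-$X$ column-wise homogeneous set for $G$, contradicting the hypothesis; otherwise $W^{[x]}$ is finite for cofinitely many $x$, so $W_e^{X \oplus C}$ has at most finitely many infinite columns and thus is not column-wise homogeneous, meaning the current condition $(F, Y)$ already forces the requirement and we simply move on.

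The main delicate point is the dichotomy after the failed search: recognizing that either the hypothesis on $G$ is directly contradicted by the r.e.\ set $W$, or the support of $W_e^{X \oplus C}$ is already bounded enough to prevent column-wise homogeneity, so no genuine extension is needed. The remainder of the construction follows the template of Lemma~\ref{lem-SCACForcing}, and is somewhat simpler here because there is no chain-versus-antichain bifurcation to manage.
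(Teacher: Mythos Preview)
Your proposal is correct and follows essentially the same approach as the paper: recursive Mathias forcing, with the key step being the definition of the r.e.\ set $W$ collecting all pairs enumerated by some finite extension of the current condition, together with the observation that either $W$ itself witnesses a column-wise homogeneous r.e.\ set (contradicting the hypothesis) or $W$ has too few infinite columns to allow $W_e^{X\oplus C}$ to be column-wise homogeneous. The paper phrases this via a sufficiently generic $H$ and a proof by contradiction rather than your explicit stage-by-stage construction with a case split, but the content is the same; your version spells out the dichotomy after the failed search a bit more explicitly than the paper does.
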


\begin{proof}
For simplicity, assume that $X$ is recursive.  The proof relativizes to non-recursive $X$.

We force with recursive Mathias conditions $(D,L)$, where $D \subseteq \omega$ is finite, $L \subseteq \omega$ is infinite and recursive, and every element of $D$ is less than every element of $L$.  The order is $(D_1, L_1) \sqsubseteq (D_0, L_0)$ if $D_0 \subseteq D_1$, $L_1 \subseteq L_0$, and $D_1 \setminus D_0 \subseteq L_0$.  Let $H$ be sufficiently generic.  Then $H$ is an infinite cohesive set for $\vec R$ (as in, for example, Section~4 of~\cite{cholak2001strength}).

Suppose for a contradiction that $W_e^H$ is column-wise homogeneous for $G$.  Let $(D,L)$ be a condition such that $D \subseteq H \subseteq L$ and
\begin{align*}
(D,L) \Vdash \forall x \forall y (y \in (W_e^H)^{[x]} \imp \text{$\set{x,y}$ is homogeneous for $G$}).
\end{align*}
Let
\begin{align*}
W = \set{\tuple{x,y} : \exists E(\text{$E$ is finite} \andd D \subseteq E \subseteq L \andd \tuple{x,y} \in W_e^E)}.
\end{align*}
$W$ is an r.e.\ set, and $\forall x \forall y (y \in W^{[x]} \imp \text{$\set{x,y}$ is homogeneous for $G$}$).  To see the second statement, suppose there is a $\tuple{x,y} \in W$ such that $\set{x,y}$ is not homogeneous for $G$, and let $E$ witness $\tuple{x,y} \in W$.  Then $(E, L \setminus E) \preceq (D,L)$, but $(E, L \setminus E) \Vdash (y \in (W_e^H)^{[x]} \andd \text{$\set{x,y}$ is not homogeneous for $G$})$, a contradiction.  Finally, $W \supseteq W_e^H$ because if $\tuple{x,y} \in W_e^H$, then there is a finite $E$ with $D \subseteq E \subseteq L$ such that $\tuple{x,y} \in W_e^E$, in which case $\tuple{x,y} \in W$.  Thus $W$ is an r.e.\ set that is column-wise homogeneous for $G$.  This contradicts the lemma's hypothesis.  Therefore no column-wise homogeneous set for $G$ is r.e.\ in $H$, so $H$ is the desired cohesive set.
\end{proof}

\begin{theorem}\label{cac-not-imply-rcolor2}
$\rca + \cac \nvdash \rcolor_2$
\end{theorem}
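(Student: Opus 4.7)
The plan is to construct an $\omega$-model $\mathcal{M}$ of $\rca + \scac + \coh$ that contains the bipartite graph $G$ from Lemma~\ref{lem-RCOLOR2notCE} but contains no infinite set homogeneous for $G$. Since $\rca \vdash \cac \biimp \scac \andd \coh$ (by Hirschfeldt and Shore~\cite{hirschfeldt2007combinatorial}), such a model will witness $\rca + \cac \nvdash \rcolor_2$. The non-model property is clear from Lemma~\ref{lem-RCOLOR2notCE}: if $H \in \mathcal{M}$ were infinite and homogeneous for $G$, then the set $\{\langle x,y\rangle : x,y \in H\}$ would be column-wise homogeneous for $G$ and recursive (hence r.e.) in $H$; so it suffices to maintain, throughout the construction, the invariant that no set of the model computes (equivalently, r.e.-ly enumerates) a column-wise homogeneous set for $G$.

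The construction proceeds in stages, producing an increasing sequence of sets $X_0 \leq_T X_1 \leq_T X_2 \leq_T \cdots$. Set $X_0 = G$, which is recursive and by Lemma~\ref{lem-RCOLOR2notCE} admits no r.e.\ column-wise homogeneous set. At stage $s$, by a standard bookkeeping argument enumerate all pairs consisting of either (a) an $X_s$-recursive stable partial order, or (b) an $X_s$-uniformly-recursive sequence of sets, and handle them one at a time. If the current instance is of type (a), apply Lemma~\ref{lem-SCACForcing} with $X = X_s$ to obtain an infinite chain or antichain $C$ such that no column-wise homogeneous set for $G$ is r.e.\ in $X_s \oplus C$; set $X_{s+1} = X_s \oplus C$. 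If the instance is of type (b), apply Lemma~\ref{lem-COHForcing} analogously to obtain a cohesive set $C$ preserving the invariant, and set $X_{s+1} = X_s \oplus C$. Let $\mathcal{M}$ consist of all sets Turing reducible to some $X_s$.

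Then $\mathcal{M}$ is closed under Turing reduction and Turing join by construction, so it is an $\omega$-model of $\rca$. Every $\scac$-instance in $\mathcal{M}$ and every $\coh$-instance in $\mathcal{M}$ appears in some $X_s$ and is resolved within the model at a later stage, so $\mathcal{M} \models \scac + \coh$ and hence $\mathcal{M} \models \cac$. The invariant is preserved throughout: no set in $\mathcal{M}$ is column-wise homogeneous for $G$, so by the observation of the first paragraph, $\mathcal{M}$ contains no infinite homogeneous set for $G$, and $G$ witnesses the failure of $\rcolor_2$ in $\mathcal{M}$.

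The only nontrivial point is the bookkeeping: we must be sure to enumerate \emph{all} instances of $\scac$ and $\coh$ that ever appear in $\mathcal{M}$, not just those recursive in some fixed $X_s$. This is handled by a standard dovetailing argument—at stage $s$ we deal only with the first $s$ enumerated instances appearing in $X_0, \dots, X_s$, and every instance that eventually joins the model gets handled at some finite later stage. No new obstacle arises here because both forcing lemmas uniformly preserve the invariant regardless of which $X_s$ one starts from.
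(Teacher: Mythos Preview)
Your proposal is correct and follows essentially the same approach as the paper: iterate and dovetail Lemma~\ref{lem-SCACForcing} and Lemma~\ref{lem-COHForcing} to build an $\omega$-model of $\rca + \scac + \coh$ containing $G$ in which no set enumerates a column-wise homogeneous set for $G$, then invoke $\rca \vdash \cac \biimp \scac \andd \coh$ from~\cite{hirschfeldt2007combinatorial} and the argument of Proposition~\ref{prop-RCADoesNotProveRCOLOR2}. The paper's proof is a one-paragraph sketch of exactly this construction; you have simply spelled out the bookkeeping in more detail.
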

\begin{proof}
Iterate and dovetail applications of Lemma~\ref{lem-SCACForcing} and Lemma~\ref{lem-COHForcing} to build a collection of sets $\Scal$ such that $(\omega, \Scal) \vDash \rca + \scac + \coh$, the graph $G$ from Lemma~\ref{lem-RCOLOR2notCE} is in $\Scal$, and no set that is r.e.\ in any set in $\Scal$ is column-wise homogeneous for $G$.  Then $(\omega, \Scal) \vDash \cac$ by \cite{hirschfeldt2007combinatorial}, and $(\omega, \Scal) \nvDash \rcolor_2$ by the same argument as in Proposition~\ref{prop-RCADoesNotProveRCOLOR2}.
\end{proof}

We conclude by proving that $\rca + \dnr \nvdash \rwkl$, thereby answering Question~9 of~\cite{Flood2012}.  In fact, we prove the stronger result $\rca + \wwkl \nvdash \rcolor_2$.  This is accomplished by building a recursive bipartite graph $G$ such that the measure of the set of oracles that compute homogeneous sets for $G$ is $0$.  It follows that there is a Martin-L\"of random $X$ that does not compute a homogenous set for $G$, and a model of $\rca + \wwkl + \neg \rcolor_2$ is then easily built from the columns of~$X$.

Recall that, in the context of a bipartite graph $G = (V,E)$, a set $H \subseteq V$ is $2$-homogeneous for $G$ if no two vertices in $H$ are connected by an odd-length path in $G$.  Here we simply say that such an $H$ is \emph{$G$-homogeneous} (or just \emph{homogeneous}).  Likewise, if $H \subseteq V$ contains two vertices that are connected by an odd-length path in $G$, then $H$ is \emph{$G$-inhomogeneous} (or just \emph{inhomogeneous}).

\begin{theorem}\label{RCOLOR2hasNRA}
There is a recursive bipartite graph $G = (\omega, E)$ such that the measure of the set of oracles that enumerate homogeneous sets for $G$ is $0$. 
\end{theorem}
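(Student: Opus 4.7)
My plan is to adapt the construction of Lemma~\ref{lem-RCOLOR2notCE}, replacing its recursion-theoretic attack criterion with a measure-theoretic one. Writing $\mathcal{H}_e := \{X \in 2^\omega : W_e^X \text{ is infinite and } G\text{-homogeneous}\}$, it suffices by countable subadditivity to produce a recursive bipartite $G$ with $\mu(\mathcal{H}_e) = 0$ for every $e$. Built stage-by-stage, the graph grows by inserting length-3 paths through two \emph{fresh} vertices whenever I ``attack'' a pair $(x,y)$; exactly as in Lemma~\ref{lem-RCOLOR2notCE}, this automatically preserves bipartiteness, since any newly created cycle would have to pass through the two freshly-introduced degree-$2$ vertices.

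At stage $s+1$, for each $e<s$ and each pair $(x,y)$ with $e<x<y\leq s$ such that $x$ and $y$ lie in distinct connected components and neither is connected to a vertex $\leq e$, I compute the clopen measure
$\mu_{e,s}(x,y) := \mu\bigl(\{\sigma \in 2^s : y \in (W_{e,s}^\sigma)^{[x]}\}\bigr)$.
If this exceeds a threshold $\theta_{e,s}$ (shrinking in $s$ but summably controlled in $e$), I attack the least such pair, inserting fresh vertices to make $\{x,y\}$ permanently $G$-inhomogeneous. All quantities involved are finite rationals effectively available at stage $s+1$, so $G$ remains recursive.

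To verify $\mu(\mathcal{H}_e) = 0$, I would assume for contradiction that $\mu(\mathcal{H}_e) \geq \delta > 0$ and run a Fubini/pigeonhole argument. If $X \in \mathcal{H}_e$ then $W_e^X$ enumerates infinitely many pairwise $G$-homogeneous elements, so integrating over $\mathcal{H}_e$ produces many stages $s$ and pairs $(x,y)$ with $\mu_{e,s}(x,y)>\theta_{e,s}$. The ``no vertex $\leq e$'' invariant inherited from Lemma~\ref{lem-RCOLOR2notCE} excludes only finitely many such pairs, so infinitely many must trigger attacks; each such attack destroys a slab of measure at least $\theta_{e,s}$ from $\mathcal{H}_e$ (since every $X$ in the slab now has $\{x,y\} \subseteq W_e^X$ with $x$ and $y$ joined by an odd-length path in $G$), eventually contradicting $\mu(\mathcal{H}_e) \geq \delta$.

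The main obstacle is calibrating $\theta_{e,s}$ so that the three competing demands are simultaneously met: attacks are frequent enough to drive $\mu(\mathcal{H}_e)$ to zero, rare enough that the ``different-component, no vertex $\leq e$'' invariants survive (as in Lemma~\ref{lem-RCOLOR2notCE}), and finite enough per stage for recursiveness. The quantitative lemma underlying the pigeonhole step --- that only finitely many heavy pairs can violate the small-vertex invariant inherited from Lemma~\ref{lem-RCOLOR2notCE}, and that each successful attack removes measure at a rate bounded below --- is the technical heart of the argument.
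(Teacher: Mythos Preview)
Your plan misses the simplification that drives the paper's argument: by Lebesgue density, if a positive measure of oracles enumerate infinite $G$-homogeneous sets, then for some single index $e$ one already has $\mu\{X : W_e^X\text{ is infinite and }G\text{-homogeneous}\}>0.9$. So it suffices to meet, for each $e$, the requirement $R_e:\ \mu\{X : W_e^X\text{ infinite, homogeneous}\}\le 0.9$, a fixed threshold independent of $s$. This lets each $R_e$ act essentially \emph{once}, and the paper runs a clean finite-injury construction: $R_e$ first locks finitely many vertices $x_0,\dots,x_{n-1}$ enumerated by more than $0.9$ measure of oracles (type~I), waits for a second batch $y_0,\dots,y_{m-1}$ in fresh components (type~II), and then merges all these components through four fresh vertices $a,b,c,d$ with a \emph{choice of parity} (add $(a,c)$ or $(a,d)$), selecting whichever makes more than $0.4$ measure of oracles inhomogeneous. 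No shrinking thresholds, no delicate Fubini bookkeeping.

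By aiming for $\mu(\mathcal{H}_e)=0$ directly you are forced into infinitely many attacks per $e$, which is precisely the calibration problem you flag as unresolved. There is also a structural hazard you do not address: if $R_e$ attacks infinitely often, repeated mergers can chain together (an attack on $(x,y_1)$, then on $(x,y_2)$, etc.) and grow a single connected component without bound; but an infinite component hands every oracle an r.e.\ infinite homogeneous set (the vertices at even distance from a fixed vertex), defeating the whole construction. The paper's one-merge-per-requirement design keeps every component finite automatically. A minor point: your $\mu_{e,s}(x,y)$ uses the column notation $(W_{e,s}^\sigma)^{[x]}$ imported from Lemma~\ref{lem-RCOLOR2notCE}, where $W_e^X$ coded a set of \emph{pairs}; here $W_e^X$ is simply a set of vertices, so presumably you intend $\mu(\{\sigma : x,y\in W_{e,s}^\sigma\})$.
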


\begin{proof}
By Lebesgue density considerations (see, for example, \cite{nies2009}~Theorem~1.9.4), if a positive measure of oracles enumerate infinite homogeneous sets for a graph $G$, then
\begin{align*}
(\forall \epsilon > 0)(\exists e)[\mu\{X : \text{$W_e^X$ is infinite and $G$-homogeneous}\} > 1-\epsilon].
\end{align*}
Thus it suffices to build $G$ to satisfy the following requirement $R_e$ for each $e \in \omega$:
\begin{align*}
R_e:\; \mu\{X : \text{$W_e^X$ is infinite and $G$-homogeneous}\} \leq 0.9.
\end{align*}

Let us first give a rough outline of the construction.  Observe our construction must necessarily produce a graph $G$ that does not contain an infinite connected component.  If $G$ has an infinite connected component, then that component contains a vertex $v$ such that infinitely many vertices are connected to $v$ by an even-length path.  These vertices that are at an even distance from $v$ can be effectively enumerated, and they form a homogeneous set.  Thus our graph $G$ must be a union of countably many finite connected components.  Each stage of the construction adds at most finitely many edges, and thus at each stage of the construction all but finitely many vertices are isolated. For each~$e$, our plan is the following.  We monitor the action of $W_e^X$ for all oracles~$X$ until we see a sufficient measure of $X$'s produce enough vertices (in a sense to made precise). Then, the idea is to satisfy $R_e$ by adding edges to these vertices in a way that defeats about half (in the measure-theoretic sense) of the oracles~$X$. This is done by a two-step process. Requirement $R_e$ acts by either type~I or type~II actions, the second type following the first type.  In a type~I action, $R_e$ locks some finite number of vertices, thereby preventing lower priority requirements from adding edges to these locked vertices.  In a type~II action, $R_e$ merges finitely many of $G$'s connected components into one connected component by adding some new edges while maintaining that $G$ is a bipartite graph. This merging is made in a way which ensures that for a sufficient measure of oracles~$X$, $W_e^X$ is inhomogeneous for the resulting graph.

We now present the construction in full detail.  At stage $s$, we say that
\begin{itemize}
\item \emph{$R_e$ requires type I attention} if $R_e$ has no vertices locked and there are strings of length $s$ witnessing that
\begin{align*}
\mu\{X : (\exists x \in W_{e,s}^X)(\text{$x$ is not connected to any $v$ locked by $R_k$ for any $k < e$})\} > 0.9;
\end{align*}

\item \emph{$R_e$ requires type II attention} if it currently has locked vertices due to a type~I action, has never acted according to type II, and there are strings of length $s$ witnessing that
\begin{align*}
\mu\{X : (\exists y \in W_{e,s}^X)(\text{$y$ is not connected to any $v$ locked by $R_k$ for any $k \leq e$})\} > 0.9;
\end{align*}

\item \emph{$R_e$ requires attention} if $R_e$ requires type I attention or requires type II attention.
\end{itemize}

At stage $0$, $E = \emptyset$, and no requirement has locked any vertices.

At stage $s+1$, let $e < s$ be least such that $R_e$ requires attention (if there is no such $e$, then go on to the next stage).  If $R_e$ requires type I attention, let $x_0, x_1, \dots, x_{n-1}$ be vertices that are not connected to any $v$ locked by $R_k$ for any $k < e$ and such that the strings of length $s$ witness that $\mu\{X : (\exists i < n)(x_i \in W_{e,s}^X)\} > 0.9$.  $R_e$ locks the vertices $x_0, x_1, \dots, x_{n-1}$.  All requirements $R_k$ for $k > e$ unlock all of their vertices.

If $R_e$ requires type II attention, let $y_0, y_1, \dots, y_{m-1}$ be vertices that are not connected to any $v$ locked by $R_k$ for any $k \leq e$ and such that the strings of length $s$ witness that $\mu\{X : (\exists j < m)(y_j \in W_{e,s}^X)\} > 0.9$.  Let $x_0, x_1, \dots, x_{n-1}$ be the vertices that are locked by $R_e$.  First we merge the connected components of the $x_i$'s into a single connected component and the connected components of the $y_j$'s into a single connected component.  To do this, let $a, b, c, d > s$ be fresh vertices, and add the edges $(a,b)$ and $(c,d)$.  The graph is currently bipartite, so for each $i < n$ add either the edge $(x_i, a)$ or $(x_i, b)$ so as to maintain a bipartite graph.  Similarly, merge the connected components of the $y_j$'s by adding either the edge $(y_j, c)$ or $(y_j, d)$ for each $j < m$.  The component of the $x_i$'s is disjoint from the component of the $y_j$'s because the $y_j$'s were chosen not to be connected to the $x_i$'s.  Thus both the graph $G_1$ obtained by adding the edge $(a,c)$ and the graph $G_2$ obtained by adding the edge $(a,d)$ are bipartite.  Each pair $\{x_i, y_j\}$ is homogeneous for exactly one of $G_1$ and $G_2$, and the strings of length $s$ witness that
\begin{align*}
\mu\{X : (\exists i < n)(\exists j < m)(x_i \in W_{e,s}^X \andd y_j \in W_{e,s}^X)\} > 0.8
\end{align*}
and therefore that
\begin{align*}
\mu\{X : \text{$W_{e,s}^X$ is either $G_1$-inhomogeneous or $G_2$ inhomogeneous}\} > 0.8.
\end{align*}
Thus the strings of length $s$ either witness that
\begin{align*}
\mu\{X : \text{$W_{e,s}^X$ is $G_1$-inhomogeneous}\} > 0.4,
\end{align*}
in which case we extend to $G_1$ by adding the edge $(a,c)$,
or that
\begin{align*}
\mu\{X : \text{$W_{e,s}^X$ is $G_2$-inhomogeneous}\} > 0.4,
\end{align*}
in which case we extend to $G_2$ by adding the edge $(a,d)$.  This completes the construction.

To verify the construction, we first notice that $G$ is bipartite because it is bipartite at every stage.  Furthermore, $G$ is recursive because if an edge $(u,v)$ is added at stage $s$, either $u > s$ or $v > s$.  Thus to check whether an edge $(u,v)$ is in $G$, it suffices to check whether the edge has been added by stage $\max(u,v)$. 

We now verify that every requirement is satisfied.  Suppose that $R_e$ acts according to type II at some stage $s+1$.  Then $R_e$ is satisfied because we have ensured that 
\begin{align*}
 \mu \{X : W_e^X~  \text{is $G$-inhomogeneous}\} > 0.4 
\end{align*}
and thus that
\begin{align*}
  \mu \{X : W_e^X~  \text{is $G$-homogeneous}\} \leq 0.6.
\end{align*}

We prove by induction that, for every $e \in \omega$, $R_e$ is satisfied and there is a stage past which $R_e$ never requires attention.  Consider $R_e$.  If $\mu\{X : \text{$W_e^X$ is infinite}\} \leq 0.9$, then $R_e$ is satisfied and $R_e$ never requires attention.  So assume that $\mu\{X : \text{$W_e^X$ is infinite}\} > 0.9$.  By induction, let $s_0$ be a stage such that no $R_k$ for $k < e$ ever requires attention at a stage past $s_0$.  If $R_e$ has locked vertices at stage $s_0$, then these vertices remain locked at all later stages because no higher priority $R_k$ ever unlocks them.  If $R_e$ does not have locked vertices at stage $s_0$, then let $s_1 \geq s_0$ be least such that the strings of length $s_1$ witness that $R_e$ requires type~I attention.  Such an $s_1$ exists because $\mu\{X : \text{$W_e^X$ is infinite}\} > 0.9$ and because the finite set of vertices that are connected to vertices locked by the $R_k$ for $k < e$ have stabilized by stage $s_0$.  $R_e$ then requires and receives type I attention at stage $s_1$, and the vertices that $R_e$ locks at stage $s_1$ are never later unlocked.  So there is a stage $s_1 \geq s_0$ by which $R_e$ has locked a set of vertices that are never unlocked.  If $R_e$ has acted according to type II by stage $s_1$, then $R_e$ is satisfied and never requires attention past stage $s_1$.  If $R_e$ has not acted according to type II by stage $s_1$, let $s_2 \geq s_1$ be least such that the strings of length $s_2$ witness that $R_e$ requires type~II attention.  Such an $s_2$ exists because $\mu\{X : \text{$W_e^X$ is infinite}\} > 0.9$ and because, past stage $s_1$, no requirement except $R_e$ can act to connect a vertex to a vertex locked by an $R_k$ for a $k \leq e$.  $R_e$ then requires and receives type~II attention at stage $s_2$.  Hence $R_e$ is satisfied, and $R_e$ never requires attention at a later stage.  This completes the proof.
\end{proof}

\begin{theorem}\label{thm-WWKLDoesNotProveRCOLOR2}
$\rca + \wwkl \nvdash \rcolor_2$.
\end{theorem}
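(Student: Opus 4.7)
The plan is to build an $\omega$-model of $\rca + \wwkl$ that does not satisfy $\rcolor_2$, using the recursive bipartite graph $G$ provided by Theorem~\ref{RCOLOR2hasNRA}. Since every infinite r.e.\ set contains an infinite recursive subset, the conclusion of Theorem~\ref{RCOLOR2hasNRA} gives that the set
\begin{align*}
\Gcal = \{X \in 2^\omega : \text{$X$ computes an infinite $G$-homogeneous set}\}
\end{align*}
has measure zero. The key observation is that even when we slice a random by columns, the resulting pieces (and finite joins of them) are still too ``measure-theoretically small'' to compute a $G$-homogeneous set.

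First, I would show that for a measure-one set of $X \in 2^\omega$, no finite join $\bigoplus_{n \in F} X^{[n]}$ of columns of $X$ lies in $\Gcal$. For each finite $F \subseteq \omega$, the map $X \mapsto \bigoplus_{n \in F} X^{[n]}$ is measure-preserving, so $\{X : \bigoplus_{n \in F} X^{[n]} \in \Gcal\}$ has measure zero; and the union over the countably many finite $F$ is still null. Intersecting with the (measure-one) set of Martin-L\"of random reals, I can fix a Martin-L\"of random $X$ such that no finite join of columns of $X$ computes an infinite $G$-homogeneous set.

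Next, let $\Scal = \{Z \in 2^\omega : Z \leq_T \bigoplus_{n \in F} X^{[n]} \text{ for some finite } F \subseteq \omega\}$ and consider the $\omega$-model $\Mcal = (\omega, \Scal)$. By construction $\Scal$ is closed under Turing reducibility and join, so $\Mcal \models \rca$. To verify $\Mcal \models \wwkl$, use the characterization $\wwkl \biimp \ran{1}$: given any $Z \in \Scal$, pick a finite $F$ with $Z \leq_T \bigoplus_{n \in F} X^{[n]}$ and any $k \notin F$; van Lambalgen's theorem applied to the Martin-L\"of random $X$ gives that $X^{[k]}$ is Martin-L\"of random relative to $\bigoplus_{n \in F} X^{[n]}$, and hence relative to $Z$. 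Since $X^{[k]} \in \Scal$, every $Z \in \Scal$ has a relative random in $\Scal$, so $\Mcal \models \wwkl$.

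Finally, $\Mcal \not\models \rcolor_2$: the graph $G$ is recursive, hence in $\Scal$, and it is bipartite, hence locally $2$-colorable. But by the choice of $X$, no finite join of columns of $X$ computes an infinite $G$-homogeneous set, so $\Scal$ contains no infinite set that is $2$-homogeneous for $G$. There is no real difficulty I anticipate; the only delicate point is ensuring that $\Gcal$ really does capture the Turing-computability (not merely enumeration) of $G$-homogeneous sets, which is handled by the remark above on r.e.\ sets having infinite recursive subsets. As a consequence, $\rca + \dnr \nvdash \rwkl$ follows as well, answering Flood's Question~9 from \cite{Flood2012}, since $\rwkl$ implies $\rcolor_2$ by Lemma~\ref{RWKLprovesLRCOLOR} while $\wwkl$ implies $\dnr$.
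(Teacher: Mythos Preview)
Your proof is correct and follows essentially the same approach as the paper: build an $\omega$-model from the columns of a suitable Martin-L\"of random $X$, use van Lambalgen's theorem to verify $\wwkl$, and use Theorem~\ref{RCOLOR2hasNRA} to see that the model contains no infinite $G$-homogeneous set.

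One simplification: your measure-preserving argument for the column maps is correct but unnecessary. Since $\bigoplus_{n \in F} X^{[n]} \leq_T X$ for every finite $F$, and the class $\Gcal$ is upward closed under $\leq_T$, it suffices (as the paper does) to choose a single Martin-L\"of random $X \notin \Gcal$; then automatically no finite join of its columns lies in $\Gcal$. This lets you skip the countable-union-of-null-sets step entirely.
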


\begin{proof}
Let $G$ be the recursive graph from Theorem~\ref{RCOLOR2hasNRA}.  There are measure $1$ many Martin-L\"of random sets, but only measure $0$ many sets compute homogeneous sets for $G$.  Thus let $X$ be a Martin-L\"of random set that does not compute a homogeneous set for $G$, and let $\mathfrak{M}$ be the structure whose first-order part is $\omega$ and whose second-order part is $\{Y : \exists k (Y \leq_T \bigoplus_{i<k}X^{[i]})\}$.  It is well-known that $\mathfrak{M} \vDash \rca + \wwkl$, which one may see by appealing to van Lambalgen's theorem (see \cite{DH10}~Section 6.9) and the equivalence between $\wwkl$ and $\ran{1}$.  Moreover, $\mathfrak{M} \nvDash \rcolor_2$ because $\mathfrak{M}$ contains the bipartite graph $G$, but it does not contain any homogeneous set for $G$.
\end{proof}

It now follows that $\rca + \dnr \nvdash \rwkl$.  This has been proved independently by Flood and Towsner~\cite{Flood2014Separating} using
the techniques introduced by Lerman, Solomon, and Towsner~\cite{lerman2013separating}.  Recently, Patey~\cite{PateyRcolor} enhanced the separation of $\dnr$ and $\rwkl$ by proving that for every recursive order $h$, there is an $\omega$-model of the statement ``for every $X$ there is a function that is $\dnrf$ relative to $X$ and bounded by $h$'' that is not a model of $\rcolor_2$.  This answers a question in~\cite{Flood2014Separating}.

\begin{corollary}\label{cor:DNRNoImpRWKL}
$\rca + \dnr \nvdash \rwkl$.
\end{corollary}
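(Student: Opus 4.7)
The plan is to extract the corollary directly from Theorem~\ref{thm-WWKLDoesNotProveRCOLOR2} by exhibiting the chain of implications
\[
\wwkl \;\Longrightarrow\; \dnr, \qquad \rwkl \;\Longrightarrow\; \rcolor_2,
\]
both provable in $\rca$, and then arguing by contrapositive: if $\rca + \dnr$ proved $\rwkl$, then every model of $\rca + \wwkl$ would be a model of $\rcolor_2$, contradicting Theorem~\ref{thm-WWKLDoesNotProveRCOLOR2}.

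First I would recall that $\rca \vdash \wwkl \imp \dnr$. This is already contained in the paper: the equivalence $\rca \vdash \wwkl \biimp \ran{1}$ was noted in Section~\ref{sec:WWKL&MLR}, and the theorem of Section~\ref{sec:DNR} showing $\rca \vdash \ran{n} \imp \dnrs{n}$ specializes at $n=1$ to $\rca \vdash \ran{1} \imp \dnr$. Composing gives $\rca \vdash \wwkl \imp \dnr$. Second, I would recall that $\rca \vdash \rwkl \imp \rcolor_2$; this is Lemma~\ref{RWKLprovesLRCOLOR} at $k=2$ (which yields $\rca \vdash \rwkl \imp \lrcolor_2$), followed by the trivial implication $\lrcolor_2 \imp \rcolor_2$.

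With these two facts in hand, the argument is a one-line contrapositive. Suppose, for a contradiction, that $\rca + \dnr \vdash \rwkl$. Then, chaining implications in $\rca$,
\[
\wwkl \;\Longrightarrow\; \dnr \;\Longrightarrow\; \rwkl \;\Longrightarrow\; \rcolor_2,
\]
so $\rca + \wwkl \vdash \rcolor_2$. This directly contradicts Theorem~\ref{thm-WWKLDoesNotProveRCOLOR2}, which exhibits an $\omega$-model $\mathfrak{M}$ of $\rca + \wwkl$ in which the recursive bipartite graph $G$ from Theorem~\ref{RCOLOR2hasNRA} has no homogeneous set. Hence $\rca + \dnr \nvdash \rwkl$, as desired.

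There is essentially no obstacle here: all the substantive work has already been done in building the non-$\rcolor_2$ model of $\wwkl$ via the measure-theoretic graph construction of Theorem~\ref{RCOLOR2hasNRA} and in showing $\rwkl \imp \rcolor_2$. The only thing to be careful about is citing the $n=1$ case of the Kuc̆era-style implication $\ran{n} \imp \dnrs{n}$ (since the paper states it in the generalized form); the same model $\mathfrak{M}$ then simultaneously witnesses $\wwkl$, $\dnr$, and the failure of both $\rcolor_2$ and $\rwkl$, making the separation explicit rather than merely proof-theoretic.
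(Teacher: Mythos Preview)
Your proof is correct and follows exactly the same approach as the paper: derive the corollary from Theorem~\ref{thm-WWKLDoesNotProveRCOLOR2} via the implications $\rca \vdash \wwkl \imp \dnr$ and $\rca \vdash \rwkl \imp \rcolor_2$. The paper's proof is a single sentence stating precisely this, so your version is simply a more detailed unpacking of the same argument.
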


\begin{proof}
This follows from Theorem~\ref{thm-WWKLDoesNotProveRCOLOR2} because $\rca \vdash \wwkl \imp \dnr$ and $\rca \vdash \rwkl \imp \rcolor_2$.
\end{proof}

\section{Summary and open questions}

In this section, we briefly recall the remaining open questions surrounding the Ramsey-type combinatorial principles.

By Avigad, Dean, and Rute~\cite{AvigadDeanRute}, $\rca + \wwkls{2} \vdash \bsig^0_2$, but by Slaman~\cite{slaman2011first}, $\rca + \ran{2} \nvdash \bsig^0_2$.  Thus we ask whether or not $\rca + \rwwkls{2}$ proves $\bsig^0_2$.

\begin{question*}[\ref{qu-2RWWKLvsBSig2}]
Does $\rca + \rwwkls{2} \vdash \bsig^0_2$?
\end{question*}

We readily see that $\rca \vdash \forall k(\srt^2_k \imp \rwkl_k)$ and therefore that $\rca \vdash \forall k \srt^2_k \imp \forall k \rwkl_k$.  However, the use of $\forall k\srt^2_k$ may not be strictly necessary.

\begin{question*}[\ref{qu-SRT22vsRWKLk}]
Does $\rca \vdash \srt^2_2 \imp \forall k \rwkl_k$?
\end{question*}

We proved that the Ramsey-type graph $k$-coloring problems are equivalent to $\rwkl$ over $\rca$ for all $k \in \omega$ with $k \geq 3$ (Corollary~\ref{cor:RCOLORkisRWKL}).  However, we do not know if the $k=2$ case has the same strength as the $k \geq 3$ cases.

\begin{question*}[\ref{qu-RCOLOR2vsRWKL}]
Does $\rca \vdash \rcolor_2 \imp \rwkl$?  
\end{question*}

By Theorem~\ref{thm-WWKLDoesNotProveRCOLOR2}, there is an $\omega$-model of~$\dnr$ (and even of~$\wwkl$) which is not a model of $\rcolor_2$.  Therefore $\dnr$ does not imply $\rcolor_2$ over $\rca$.  However, we are unable to determine whether or not the converse holds.  The combinatorics of $\rcolor_2$ differ enough from the combinatorics of $\rwkl$ so that it is not possible to directly adapt Flood's proof that $\rca \vdash \rwkl \imp \dnr$ to a proof that $\rca \vdash \rcolor_2 \imp \dnr$.

\begin{question*}[\ref{qu-RCOLOR2vsDNR}]
Does $\rca \vdash \rcolor_2 \imp \dnr$?
\end{question*}

Of course, a negative answer to Question~\ref{qu-RCOLOR2vsDNR} would also provide a negative answer to Question~\ref{qu-RCOLOR2vsRWKL}.\\

The following `before/after' diagrams summarize the progress made in this paper towards the development of the reverse mathematics zoo below $\rt^2_2$. Double arrows indicate strict implications, single arrows indicate implications not known to be strict, and dotted arrows indicate non-implications.  All implications and non-implications are over $\rca$.

\usetikzlibrary{arrows}
\usetikzlibrary{decorations.markings}

\begin{center}
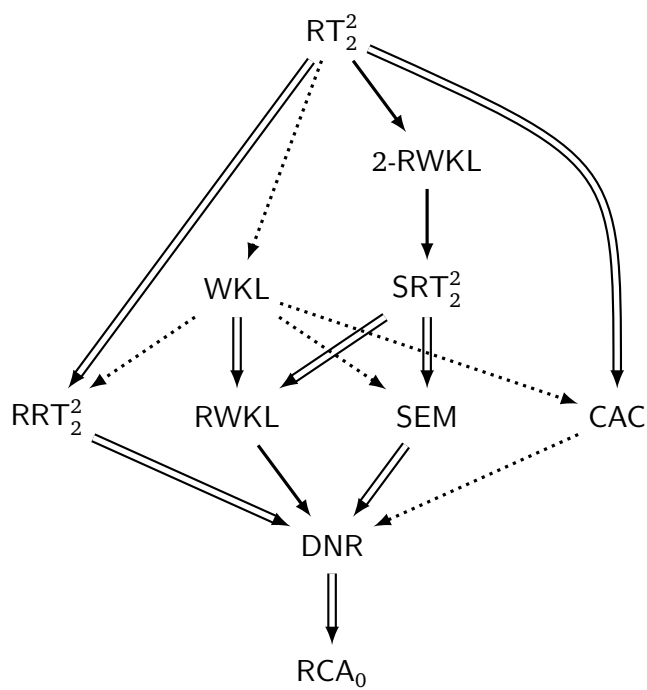

\begin{tikzpicture}[x=2.5cm, y=1.7cm, 
	node/.style={minimum size=2em},
	impl/.style={draw,very thick,-latex},
	strict/.style={draw, thick, -latex, double distance=2pt},
	nonimpl/.style={draw, very thick, dotted, -latex}]

	\node[node] (rt22) at (1.5,5) {$\rt^2_2$};
	\node[node] (rwklzp) at (2, 4)  {$\rwkls{2}$};
	\node[node] (wkl) at (1, 3) {$\wkl$};
	\node[node] (srt22) at (2, 3) {$\srt^2_2$};
	\node[node] (rrt22) at (0, 2) {$\rrt^2_2$};
	\node[node] (rwkl) at (1, 2) {$\rwkl$};
	\node[node] (sem) at (2, 2) {$\semo$};
	\node[node] (cac) at (3, 2) {$\cac$};
	\node[node] (dnr) at (1.5, 1) {$\dnr$};
	\node[node] (rca) at (1.5, 0) {$\rca$};
	
	\draw[impl] (rt22) -- (rwklzp);
	\draw[impl] (rwklzp) -- (srt22);
	\draw[impl] (rwkl) -- (dnr);

	\draw[strict] (rt22) -- (rrt22);
	\draw[strict] (rrt22) -- (dnr);
	\draw[strict] (dnr) -- (rca);
	\draw[strict] (sem) -- (dnr);
	\draw[strict] (wkl) -- (rwkl);
	\draw[strict] (srt22) -- (rwkl);
	\draw[strict] (srt22) -- (sem);
	\draw[strict] (rt22) .. controls (3,4) ..  (cac);
	
	\draw[nonimpl] (rt22) -- (wkl);
	\draw[nonimpl] (wkl) -- (rrt22);
	\draw[nonimpl] (wkl) -- (cac);
	\draw[nonimpl] (wkl) -- (sem);
	\draw[nonimpl] (cac) -- (dnr);
\end{tikzpicture}
\captionof{figure}{Local zoo before.}
\end{center}


\begin{center}
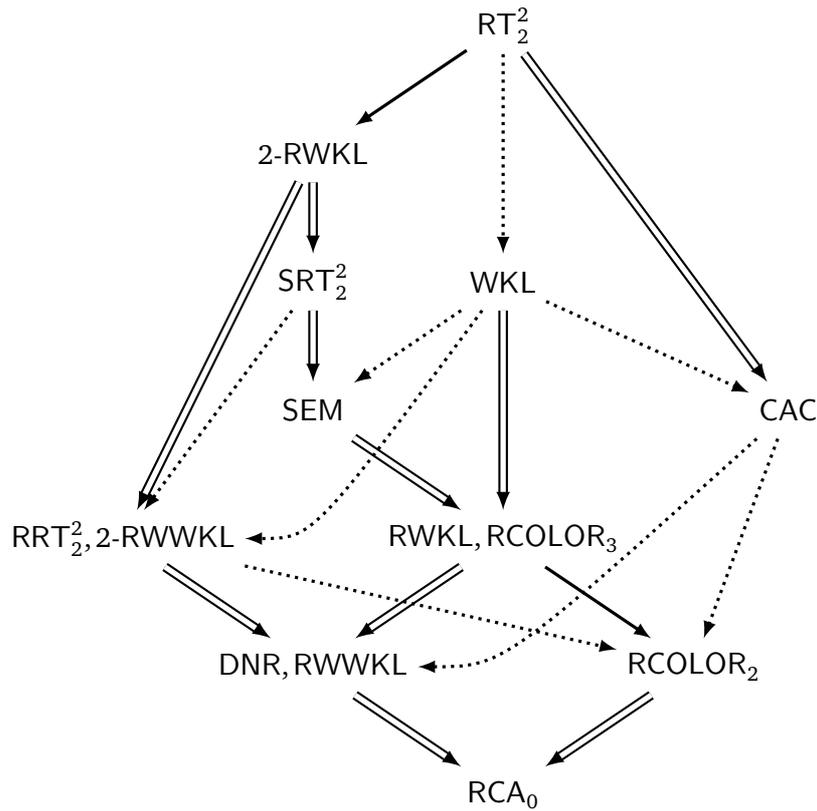

\begin{tikzpicture}[x=2.5cm, y=1.7cm, 
	node/.style={minimum size=2em},
	impl/.style={draw,very thick,-latex},
	strict/.style={draw, thick, -latex, double distance=2pt},
	nonimpl/.style={draw, very thick, dotted, -latex}]

	\node[node] (rt22) at (2, 6) {$\rt^2_2$};
	\node[node] (rwklzp) at (1, 5)  {$\rwkls{2}$};
	\node[node] (wkl) at (2, 4) {$\wkl$};
	\node[node] (srt22) at (1, 4) {$\srt^2_2$};
	\node[node] (rrt22) at (0, 2) {$\rrt^2_2, \rwwkls{2}$};
	\node[node] (rwkl) at (2, 2) {$\rwkl, \rcolor_3$};
	\node[node] (sem) at (1, 3) {$\semo$};
	\node[node] (cac) at (3.5, 3) {$\cac$};
	\node[node] (dnr) at (1, 1) {$\dnr, \rwwkl$};
	\node[node] (rca) at (2, 0) {$\rca$};
	\node[node] (rcolor2) at (3,1) {$\rcolor_2$};
	
	\draw[impl] (rt22) -- (rwklzp);
	\draw[impl] (rwkl) -- (rcolor2);

	\draw[strict] (rwklzp) -- (srt22);
	\draw[strict] (rwkl) -- (dnr);
	\draw[strict] (rwklzp) -- (rrt22);
	\draw[strict] (rrt22) -- (dnr);
	\draw[strict] (dnr) -- (rca);
	\draw[strict] (sem) -- (rwkl);
	\draw[strict] (wkl) -- (rwkl);
	\draw[strict] (srt22) -- (sem);
	\draw[strict] (rt22) --  (cac);
	\draw[strict] (rcolor2) -- (rca);
	
	\draw[nonimpl] (rt22) -- (wkl);
	\draw[nonimpl] (wkl) .. controls (1,2) ..  (rrt22);
	\draw[nonimpl] (wkl) -- (cac);
	\draw[nonimpl] (wkl) -- (sem);
	\draw[nonimpl] (cac) .. controls (2,1) ..  (dnr);
	\draw[nonimpl] (cac) -- (rcolor2);
	\draw[nonimpl] (srt22) -- (rrt22);
	\draw[nonimpl] (rrt22) -- (rcolor2);
\end{tikzpicture}
\captionof{figure}{Local zoo after.}
\end{center}

\section*{Acknowledgments}
We thank Fran\c{c}ois Dorais, Emanuele Frittaion, and our anonymous reviewer for helpful comments on the drafts of this work.

\bibliography{bibliography}
\bibliographystyle{plain}

\end{document}